\documentclass[12pt]{amsart}

\usepackage[english]{babel}
\usepackage[utf8]{inputenc}
\usepackage[T1]{fontenc}

\usepackage[margin=1.5in]{geometry}

\usepackage[colorinlistoftodos]{todonotes}
\usepackage[colorlinks=true, allcolors=blue]{hyperref}

\usepackage[alphabetic]{amsrefs}

\usepackage[makeroom]{cancel}
\usepackage{setspace}
 \usepackage{changepage}
\usepackage{amsfonts} 
\usepackage{amssymb}
\usepackage{amsthm}
\usepackage{indentfirst}
\usepackage{enumerate}
 \usepackage{enumitem}   
\usepackage{amsmath}
\usepackage{graphicx}

\usepackage{mathabx}
\graphicspath{ {./images/} }

\usepackage{enumitem}
\usepackage[most]{tcolorbox}

\usepackage{tikz-cd}

\definecolor{myGreen}{RGB}{10 100 10}

\newcommand{\bC}{\mathbb{C}}

\newcommand{\bR}{\mathbb{R}}

\newcommand{\bF}{\mathbb{F}}
\newcommand{\bN}{\mathbb{N}}

\newcommand{\bA}{\mathbb{A}}

\newcommand{\cB}{\mathcal{B}}
\newcommand{\cC}{\mathcal{C}}

\newcommand{\cS}{\mathcal{S}}

\newcommand*{\tran}{\mathsf{T}}

\newtheorem{theorem}{Theorem}[section]
\newtheorem{lemma}[theorem]{Lemma}
\newtheorem{prop}[theorem]{Proposition}

\newtheorem{cor}[theorem]{Corollary}
\newtheorem{thm}[theorem]{Theorem}

\newtheorem*{cor*}{Corollary}
\newtheorem*{thm*}{Theorem}
\newtheorem*{lem*}{Lemma}

\newtheorem*{prop*}{Proposition}

\theoremstyle{definition}

\newtheorem*{defn*}{Definition}

\theoremstyle{remark}

\title[Convex sets and bases--classical and nc]{Compact convex sets and bases--classical and noncommutative}

\author[David Blecher]{David P. Blecher}
	
\address{Department of Mathematics, University of Houston, Houston, TX 77204-3008.}
	
\email{dpbleche@central.uh.edu}

\author[C. H. Pretorius]{Christiaan H. Pretorius}
\address{Department of Mathematics, University of Houston, Houston, TX 77204-3008.}
	
\email{chpretor@uh.edu}

 \subjclass[2020]{47A20, 46A55, 47L07, 47L25; Secondary 46L51, 46L52, 47L05, 47L30}
 
\date{Revision of 7/7/2026}

\begin{document}

    \maketitle 

    \begin{abstract}   Matrix and noncommutative convexity constitute an important area of modern noncommutative analysis and have found significant applications in mathematical physics.   
In the first part of our paper we give an abstract characterization of  matrix convex sets, and   compact matrix convex sets.
      Our approach is in some part via a  universal   Banach space (resp.\ operator space)
    $X_K$  of an abstract compact convex set  (resp.\ matrix convex set) $K$.  This turns out to be a concrete construction
    of the base norm space (resp.\ nc base norm space) with base $K$, together with a natural TVS topology.   Noncommutative (nc for short) base norm spaces, recently developed by the first author and Hay,  are an important class of operator spaces which include 
    duals and preduals of unital $C^*$-algebras and von Neumann algebras, and operator systems, where the `base' is
    exactly the noncommutative convex set of (matrix) states on these.  In the later parts of the paper we give many applications, mostly  to base norm spaces (classical and noncommutative).     We also refine some of our recent results concerning regularity of convex  sets  
     (classical and noncommutative).  We give several interesting characterizations of base norm spaces   (classical and noncommutative).   Any such characterization will correspond by duality  to a new characterization of 
operator systems, or in the classical case, of function systems. For example, (complex) nc dual base norm spaces  are the matrix ordered  LCTVS's $V$
such that $V$ (at level 1) has a linear base  which is compact.     
       \end{abstract}

\section{Introduction} Matrix and noncommutative convexity constitute an important area of modern noncommutative analysis and have found significant applications in mathematical physics.  See, for example, the discussion and  references cited in the introductions of \cite{BMcI,BH,DK}.  Indeed  there are many 
 compelling expositions of these connections in recent literature, such as the work of the `Helton school' (we mention just \cite{EPS} since it is a survey), or as another example 
 forthcoming work of Kennedy and Skoufranis \cite{KS}.  
 In the first part of our paper, we provide an abstract characterization of  matrix convex sets, as well as of  compact matrix convex sets; a brief description of this characterization is given three paragraphs below.
 This of course in some sense gives a new characterization of operator systems in view of the well known duality between the latter and matrix convex sets \cite{WW,DK}.
    Before this we give a quick review of the classical real case of this, that is abstract characterizations of convex sets and compact convex sets, 
    and we will extend this to the complex case.    Our approach is via a  universal   Banach space (resp.\ operator space) $X_K$ of an abstract compact convex set  (resp.\ matrix convex set) $K$.  This turns out to be a concrete construction
    of the (unique) {\em base norm space} (resp.\ {\em nc base norm space}) with base $K$, together with a natural TVS topology.   Nc  base norm spaces are an important class of operator spaces which include 
    duals and preduals of unital $C^*$-algebras and von Neumann algebras, and operator systems, where the `base' is
    exactly the noncommutative convex set of (matrix) states on these.  In a very recent  paper \cite{BH} the first author and Hay  generalized the theory of base norm spaces to the complex case, and  further to the noncommutative setting relevant to `quantum convexity'.   
    In that paper we spell out the importance of this, e.g.\ the connection to GPT's (Generalized Probability Theories) in mathematical physics (see e.g.\ \cite{ALetal,Lami,DL}).   Or, as we said there, {\em Quantum Channels} or {\em CPTP} maps
    ({\em completely positive trace preserving}), are just our {\em base morphisms}, that is maps between matrices (viewed as nc base spaces in our language) preserving the base. The adjoint (dual) of such maps are the {\em UCP maps}.
     Indeed mathematical physicists usually prefer the base formulation, as any operator algebraist not familiar with physics knows who has tried to read a quantum physics or QIT article and found themselves having to `translate'  the predual or base formulations into statements at the algebra level.

    In the later parts of our paper we give many applications, mostly  to base norm spaces (classical and noncommutative).   For example we show how the weak* topology on the dual $M^*$ of a von Neumann algebra can be constructed 
     from the weak* topology on $M^*_+$, or from the weak* topology on the state space.   We also refine the ‘main regularity results' of \cite{Breg}, concerning regularity of convex  sets 
     (classical and noncommutative).  For example if $V$ is an LCTVS  and if $K$ is a compact convex set 
which spans $V$ and lies in a hyperplane in $V$ not passing through 0 
then $V$ is canonically isomorphic to $A(K)^*$, the dual of the space of continuous affine functions on $K$, via a continuous  isomorphism  which
is a homeomorphism on bounded sets.
Similarly in the nc case: If $V$ is a complex  $*$-LCTVS, and if $K$ is a selfadjoint compact matrix convex set 
such that $K_1$  spans $V_{\rm sa}$ and lies in a hyperplane not passing through 0, 
then $\bA(K,\bC)^* \cong V$ via a continuous   selfadjoint  isomorphism  which
is a homeomorphism on bounded sets. 
We also give several interesting characterizations of  dual operator systems, and base norm spaces   (classical and noncommutative).  
Again, any such characterization will correspond by the duality results in \cite{BH} to a new characterization of 
operator systems, or in the classical case, function systems.  For example, (complex) nc dual base norm spaces `are' the matrix ordered  $*$-LCTVS's $V$ with  closed  matrix cones such that (at level 1) $V_+$  is locally compact and spans $V$.    Or, 
such that $V$ (at level 1) has a linear base  which is $\tau$-compact (see Theorem \ref{breg25n}).   Here $\tau$ need not be the expected weak* topology,
but it may be switched with it, and in any case is the same on `bounded sets'.   (The majority of our characterizations are not however stated in terms of matrix ordered  spaces.)

Most of our paper is developed in the setting of Wittstock's  complex matrix convexity as in \cite{WW}, for example.
 Since our paper is already sizable, we will defer to a sequel  the real  matrix convexity case, which is a bit more technical, as well as the case relevant to Davidson and Kennedy's nc convexity \cite{DK}.    We also give a development of the `classical case', and of the complex variant of this, all of which is used later in the noncommutative  case. 
 
 The answer to  the question ``What is a(n abstract)  compact convex set?'' is: it is  an abstract convex set $K$  which is compact and topologically convex (this is defined at the start of Subsection \ref{wiacc}).  Moreover 
such a set `is' a compact convex set in an LCTVS if and only if it is locally convex  (that is, $K$ has a basis of convex neighborhoods).  See Theorem \ref{lmjot}.  
In Section \ref{wiamcsccs} we will answer the question ``What is a  
noncommutative compact convex set?'' 
in the complex case, for example as the matrix sets $K$ in the following result:
 
 \begin{thm} \label{defmck}  Let $K$ be a `sequence' of sets $(K_n)_{n \in \bN}$ (resp.\ $(K_n)_{n \leq \kappa}$, for a fixed cardinal $\kappa$) such that $K_1$ is an 
abstract compact, topologically convex, (resp.\ and locally convex) set and we have 
actions $\bC^n \times K_n \to K_1$ for all $n \in \bN$ (resp.\ $n \leq \kappa$), 
written as $\xi^* v \xi$ for $v \in K_n, \xi \in \bC^n, \| \xi \| = 1$, 
satisfying three conditions
 {\rm {\bf (M1)$^{\prime}$}},  {\rm {\bf (M2)$^{\prime}$}}, and 
 {\rm {\bf (M3)$^{\prime}$}}.
We also assume that  $K_n$ has a compact topology for $n \in \bN$ (resp.\ $n \leq \kappa$) with 

\medskip

\begin{adjustwidth}{1em}{1em} {\bf (M4)} $\; \;   \; \xi^* v \xi$ is continuous in $v \in K_n$ for fixed $\xi \in \bC^n, \| \xi \| = 1$. \end{adjustwidth}

\medskip 

If these all hold then $K$ is abstractly matrix (resp.\ nc) affinely homeomorphic to  a (complex) matrix (resp.\ nc) compact convex set in 
a TVS (resp.\  dual operator space).   Indeed if $K_1$ is locally convex then the
 TVS can be chosen to be a LCTVS. 
\end{thm}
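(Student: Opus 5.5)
We construct the universal object attached to $K$ and embed $K$ into it. The first step applies the classical result, Theorem \ref{lmjot} (in its complex variant), to $K_1$. Since $K_1$ is abstract compact and topologically convex, it is affinely homeomorphic to a compact convex set in a TVS; if $K_1$ is also locally convex, the TVS can be taken to be an LCTVS. The universal choice is the space $X_{K_1}$, namely the base norm space with base $K_1$, together with its natural TVS topology. We take its complexification $X = X_{K_1} \oplus i X_{K_1}$ as our complex space.

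Next we realize the higher levels. For $v \in K_n$, define a matrix $\theta_n(v) \in M_n(X)$ using the actions. The diagonal entries come from $e_j^* v e_j$. The off-diagonal entries $e_i^* v e_j$ come from the polarization identity applied to $\xi^* v \xi$ with $\xi = (e_i + \lambda e_j)/\sqrt{2}$, $\lambda \in \{1,-1,i,-i\}$, rescaled to unit vectors. Condition (M1)$'$, together with whatever compatibility axioms (M2)$'$ and (M3)$'$ impose on the actions (direct sums and compressions by isometries/scalars), should guarantee the following three things:
\begin{enumerate}
\item $\theta_n$ is well defined and injective, i.e.\ $v$ is determined by its compressions to $K_1$.
\item $\theta = (\theta_n)$ is a matrix affine map, that is, $\theta_m(\alpha^* v \alpha) = \alpha^* \theta_n(v) \alpha$ for isometries $\alpha$, and $\theta$ respects matrix convex combinations $\sum_k \alpha_k^* v_k \alpha_k$.
\item $\theta(K) = (\theta_n(K_n))$ is closed under these matrix convex combinations, so that it is a matrix convex set.
\end{enumerate}
By (M4), each scalar compression $v \mapsto \xi^* v \xi$ is continuous into $K_1 \subset X$, and hence so is each entry of $\theta_n(v)$. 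Thus $\theta_n$ is a continuous injection from the compact space $K_n$ into the Hausdorff TVS $M_n(X)$, and therefore a homeomorphism onto a compact set. This gives a matrix affine homeomorphism of $K$ onto a compact matrix convex set.

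For the nc version with levels $n \le \kappa$, we instead work in a dual operator space. Take $X_K$ to be the nc base norm space constructed from $K$. We realize it as a weak* closed subspace of a dual operator space, for example as the dual of the operator system $\bA(K,\bC)$ of continuous matrix affine functions. The cardinal levels are placed in $M_\kappa(X)$, with entries given by compressions as before. In this setting, compactness is measured in the point-weak* topology.

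The main obstacle I expect is the injectivity/affineness step. One has to show that the abstract axioms force the polarization-defined matrix $\theta_n(v)$ to be consistent, meaning sesquilinear in the unit vectors. One also has to show that two elements with the same compressions coincide, and that the image of a matrix convex combination is the corresponding combination of images. My first attempt would be to show that $(\xi,\eta) \mapsto$ (polarized compression) extends to a sesquilinear form with values in $X$, using affineness of the actions from (M2)$'$ and checking it on two-dimensional subspaces. Injectivity would then be deduced from (M3)$'$, via the existence of enough continuous matrix affine scalar functions to separate points of $K_n$.
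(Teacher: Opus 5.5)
Your architecture is the paper's (Theorems \ref{defmc} and \ref{vae}). You realize $K_1$ in $X_{K_1}$ with its canonical TVS topology, complexify to $Z$, and send $v \in K_n$ to the polarization matrix $M_v=[v_{ij}]$ of (\ref{pn}). You then get continuity of the entries from (M4), conclude by compact-to-Hausdorff, and use Lemma \ref{r} for the LCTVS statement. However, the algebraic core is left as ``should guarantee'', and the route you sketch for it misreads the hypotheses, so there is a genuine gap.

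Everything hinges on the identity $\xi^* M_v \xi = \xi^* v \xi$ for all unit $\xi \in \bC^n$ (the paper's (M1)). Once you have it:
\begin{itemize}
\item Injectivity is immediate from (M2)$'$, which says precisely that the $1$-compressions determine $v$.
\item A matrix in $M_n(Z)$ is determined by its compressions, and (M3)$'$ expresses $\gamma^* v\gamma$ and $v_1\oplus v_2$ through $1$-compressions. So (M3)$'$ translates directly into $M_{\gamma^* v \gamma} = \gamma^* M_v \gamma$ and $M_{v_1 \oplus v_2} = M_{v_1} \oplus M_{v_2}$, i.e.\ $\theta$ is matrix affine with matrix convex image.
\end{itemize}
Neither (M2)$'$ nor (M3)$'$ constrains how $\xi^* v \xi$ depends on $\xi$. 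So ``affineness of the actions'' cannot yield sesquilinearity, and compressing to two-dimensional subspaces only moves the problem to level $2$. Likewise, injectivity cannot come from (M3)$'$ via separating matrix affine functions. No level-one data distinguishes two elements with the same compressions, so separation at level $n$ \emph{is} (M2)$'$; this is how the paper's alternative proof via $A(K)$ runs.

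The missing idea is that (M1)$'$ is a parallelogram identity (Lemma \ref{M1M1'}). For each real linear functional $\varphi$ on $X_{K_1}$, the function $\xi \mapsto \varphi(\xi^* v \xi)$ satisfies the parallelogram law on $\bC^n$. Together with phase invariance (see below), this makes it $B(\xi,\xi)$ for a sesquilinear form $B$, by a Jordan--von Neumann argument. Polarization then gives $B(e_i,e_j) = \varphi_c(v_{ij})$, hence $\varphi(\xi^* M_v \xi) = \varphi(\xi^* v \xi)$, and functionals separate points of $X_{K_1}$.

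When you make this precise, note that the parallelogram law alone does not give a \emph{complex} sesquilinear form. You also need phase invariance, $(\lambda\xi)^* v (\lambda\xi) = \xi^* v \xi$ for $|\lambda| = 1$. By (M3)$'$ (compress by $\gamma = \xi$), this reduces to triviality of the level-one action, $\lambda^* w \lambda = w$ for $w \in K_1$. This condition is implicit in the notation and tacitly used in Lemma \ref{M1M1'}, but it does not follow from (M1)$'$--(M4). For example, take the complex symmetric contractions $K_n = \{ Z \in M_n : Z = Z^{\tran}, \ \| Z \| \leq 1 \}$ with $\xi^* Z \xi := \xi^{\tran} Z \xi$, compressions $\gamma^{\tran} Z \gamma$, and ordinary direct sums. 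This satisfies all of (M1)$'$--(M4), yet compression by $\gamma = \iota$ at level one sends $z$ to $-z$. An abstract nc affine bijection $F$ onto a concrete matrix convex set would then force $F_1(-z) = \overline{\iota}\, F_1(z)\, \iota = F_1(z)$, so none exists, and the condition should be added explicitly.

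Finally, your paragraph on the nc case ($n \leq \kappa$) is not yet an argument. Nothing shows that the polarization matrix lies in $M_\kappa(X)$ or that the image is weak* compact. The paper itself defers that case to a sequel, so there is no proof here to compare it with.
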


The reader can find the conditions {\rm {\bf (M1)$^{\prime}$}},  {\rm {\bf (M2)$^{\prime}$}}, and 
 {\rm {\bf (M3)$^{\prime}$}}, and the proof of the theorem,  in Section \ref{wiamcsccs}, in the matrix convex case.
Suffice it to say that {\rm {\bf (M1)$^{\prime}$}} asserts that the $\xi^* v \xi$ expressions satisfy a `convex parallelogram identity' (or alternatively that they satisfy they act as they ought', or that $k \oplus k$ commutes with a certain $2 \times 2$ unitary matrix with scalar entries for $k \in K$), 
{\rm {\bf (M2)$^{\prime}$}} asserts that such  expressions determine $v$, and {\rm {\bf (M3)$^{\prime}$}} 
phrases `direct sums' and `compressions' in terms of such  expressions. 
 We postpone the proof of the `respectively' case of this theorem, namely the abstract 
 characterization of Davidson and Kennedy's nc compact convex sets, to the sequel paper.

    Turning to definitions and notation, we will begin with the phrases used above. We write $\bF$ for $\bR$ or $\bC$. 
     The letter $H$ is usually reserved for real or complex Hilbert spaces, and $K$ for a convex or matrix convex set.  We will be considering real and complex vector spaces, which may also be normed spaces, ordered vector spaces, or $*$-vector spaces. 
     Where the usual $i$-$j$-matrix subscripting conflicts with the lettering for the imaginary number $i$, we sometimes write $\iota$ for the latter. 
     An {\em ordered vector space} is a vector space $E$ with a proper positive cone $E_+$. 
      By a {\em $*$-vector space} we mean a vector space with an involution 
(a period 2 automorphism) $*$.  If $\bF = \bC$ then we assume that the involution is conjugate linear. We write $E_{\rm sa} = \{ x \in E : x = x^* \}$ for the elements in a set $E$ which are selfadjoint.  
We assume that the positive cone of an ordered $*$-vector space $E$ is contained in  $E_{\rm sa}$. 
 We recall that the positive cone is called {\em generating} if $E_+$ spans $E$; e.g.\ if $E_{\rm sa} = E_+ - E_+$. 
    A real (resp.\ complex) Hausdorff LCTVS with a continuous    involution will be called a $*$-LCTVS. Similarly for TVS's. 
    We will say that an ordered $*$-vector space $X$ has a {\em linear base} $K$, if $K$ is a base for $X_+$ (that is, every nonzero $x \in X_+$ may be written uniquely as $ck$ for $c > 0$ and $k \in K$), and $K$ spans $X$. 
    For basics on bases and base norm spaces see e.g.\ the prequel \cite{BH}, which we expect the reader to be somewhat familiar with, at least for the many parts of our paper concerning base norm spaces and their nc variant.
    We write $M_n(\bF)$ for the  $n \times n$ matrices, or sometimes simply $M_n$ when the context is clear.  We write $M_n(E)$ for the $n \times n$ matrices with entries from a vector space $E$. 
    We recall that  $*$-vector space $X$ is {\em matrix ordered} if there is a proper cone $M_n(X)_+ \subset M_n(X)_{\rm sa}$ for all $n \in \bN$, such that $\alpha^* M_n(X)_+  \alpha 
\subseteq M_m(X)_+$ for $\alpha \in M_{n,m}(\bF)$. 
It follows that $M_n(X)_+  \oplus M_m(X)_+ \subset M_{n+m}(X)_+$.  A sequence $(C_n)$ of cones which satisfy these properties is also referred to as a {\em matrix cone}.

We assume that the reader is familiar with basic convexity theory.  We denote the convex hull of a set $A$ by ${\rm co} (A)$ and the closed convex hull by $\overline{{\rm co}} (A)$. 
We should warn the reader about a potential  source of subtle confusion in the literature.   Namely sometimes the naked 
phrase ``compact convex set'' in the literature really means ``compact convex set in an LCTVS'', as opposed to 
 ``compact convex set in a TVS''. This is sometimes not said explicitly, for example in some statements of Kadison's theorem 
 in the literature concerning the duality of the categories  of `compact convex sets' and `function systems'.  They are not the same, not every compact convex set  $K$ in a TVS is affinely homeomorphic to a compact convex set in a LCTVS.  Indeed this was an open problem until Roberts' 1977 needle-point space  counterexample, see e.g.\ \cite{Rob}.  For such $K$, $A(K)$ does not separate points of $K$, which can be a  source of deep trouble. 
  Another  implication of this for us is that not every base norm space with compact base $K$ 
 is a dual base norm space  (Proposition \ref{u}).  To be a dual base norm space one must add the topological
 condition  that $K$  is locally convex.

{\em Operator systems} are a far-reaching noncommutative generalization of function systems which plays a central role in the theory of operator algebras and noncommutative functional analysis, generally.      They may be characterized abstractly as the {\em archimedean matrix order unit spaces},
namely a matrix ordered   $*$-vector space in the sense above which possesses a `matrix order unit' (see \cite{CE,BR} for the latter definition). 
For  $n \in \bN$ we define the amplification of a linear map $T: V \to W$ by 
        \[T^{(n)}: M_n(V) \to M_n(W)\]
        \[[x_{ij}] \mapsto [T (x_{ij})] . \]
        The natural morphisms between matrix ordered spaces (resp.\ operator systems) are the  {\em  completely positive} (resp.\ 
        {\em  unital completely positive} (ucp)) maps.
        These  are linear maps $T: V \to W$ with  every amplification  positive (which implies that it is  selfadjoint, that is $*$-preserving, that is
        $T(x^*) = T(x)^*$ for $x \in V$).  
A (concrete)  operator space is a linear space $E$ of operators on a Hilbert space $H$, together with norms on $M_n(E)$ inherited from $B(H^{(n)})$.
These may be characterized as the vector spaces $E$ with a norm on $M_n(E)$ for all  $n \in \bN$ satisfying certain 
conditions which we will not repeat here. The completely bounded norm is $\| T \|_{\rm cb} = \sup_n \, \| T^{(n)} \|$, and $T$ is completely  contractive (resp.\ completely  isometric)  if  $\| T \|_{\rm cb}  \leq 1$ (resp.\ each $T^{(n)}$ is an isometry).
 A \emph{state} (resp.\ \emph{matrix state}) on an operator system  $V$ is a (selfadjoint) completely positive unital scalar valued ($M_n$-valued) map.  Then $({\rm ucp}(V, M_n))$ is the \emph{matrix state space}. 
 
For general background on complex operator systems and spaces, and in particular on the definitions etc.\ in this section, we refer the reader to e.g.\ \cite{Pnbook,BLM,ERb}.  For real operator systems and spaces see \cite{BR,BReal}. The theory of complex   $C^*$- and von Neumann algebra theory may be found in e.g.\ \cite{Ped}. 
The connection between complex operator systems and matrix convex sets   may be found in \cite{WW} (see also e.g.\ 
\cite{Far,DK,Dav,Fetal} and references therein).  
A matrix  set or nc set in a vector space $E$ is a sequence $(X_n)$, with $X_n \subseteq M_n(E)$.
  We call $X_n$ the $n$th level of $X$.  A set with levels is sometimes called a {\em graded set}.
A (real or complex) {\em matrix convex set} in a (real or complex)  
vector space $E$ is a nc set $K = (K_n)$ in $E$ satisfying 1)\ $x \in K_m$ and $y \in K_n$ implies $x \oplus y \in K_{m+n}$, and
  2)\ $a \in M_{n,m}(\bF)$ with $a^* \, a = I_n$ and $x \in K_n$ implies $a^*xa \in K_m$.   Here $n, m \in \bN$. 
A {\em matrix convex combination}  is a finite sum 
$\sum_i\, \gamma_i^* v_i \gamma_i$ for  $\gamma_i \in M_{n_i,n}$ with $\sum_i\, \gamma_i^*  \gamma_i = I_n$ and $v_i$ at level $n_i$.
So  matrix convex sets are the nc sets which are closed under matrix convex combinations. 
  If $E$ is a topological vector space, then we say $K$ is closed (compact) if each $K_n$ is closed (compact).
  The matrix state space   $({\rm UCP}(\cS, M_n))$ of an operator system $\cS$ is the generic example of a (locally convex) compact matrix convex set \cite{WW}. 
  We write $A(K)$ or $A(K,\bF)$ for the continuous affine scalar functions on a compact convex set $K$, which are unital selfadjoint subspaces of $C(K, \bF)$, the continuous functions on $K$ with values in the field $\bF$. We write $\bA(K)$ or $\bA_{\bF}(K)$ for the noncommutative version from \cite{WW}, the matrix affine continuous nc functions into the scalars.  
  We recall that a nc function is {\em matrix affine} if it preserves matrix convex combinations. 
  Such $\bA(K)$  turn out to be  the generic example of (complete) operator systems  \cite{WW}.  These results give dualities between the categories of operator systems and weak* compact matrix convex sets in an LCTVS (see \cite[Proposition 3.5]{WW}, \cite{DK}), which generalize the `Kadison duality' between 
  function systems and compact convex sets in an LCTVS.

In classical functional analysis, {\em base norm spaces} appear as the objects that are dual to {\em archimedean order unit spaces}, or equivalently, by Kadison's theorem, to the class of function systems.  Both are ordered vector spaces whose order structure induces a norm. Whereas archimedean order unit spaces carry a norm which is induced by an order unit, the norm on a base norm space is induced by a base for the cone of positive elements. 
Similarly in the nc case: nc base norm spaces are the objects that are dual to operator systems.  As we said earlier we expect the reader to be somewhat familiar with 
basics of base norm spaces and their nc variant  in \cite{BH}.  To avoid clutter in the Introduction, we will review the basic definitions and facts about base norm spaces and nc base norm spaces at the start of  Subsection \ref{wiaccu}
and Section \ref{mcs} respectively.

\section{What is a convex set?  Compact convex set?  Matrix convex set?}  \label{wiaccc}

\subsection{What is a convex set?}   \label{wiac} Already in 1939 M. H. Stone had the idea of abstractly characterizing convex sets \cite{Stone}.  In 1954 Hausner gave a much cleaner characterization and proof
 \cite{Hausner}.   Since then many mathematicians and mathematical physicists have needed and have rediscovered this abstract 
  characterization of  convex sets, often much less elegantly than in \cite{Hausner}.       
    
 The abstract `convexity operation' or `mixture'   $K \times K \times [0,1] \to K$  is usually written as $xty$ for $x, y \in K, t \in [0,1]$. A map is {\em abstract affine} if 
 $f(xty) = f(x)tf(y)$  for $x, y \in K, t \in [0,1]$.   Mixtures play a role in the theory of utilities associated in part with von Neumann and Morgenstern (e.g.\ see the  volume containing \cite{Hausner} for more details), and have also been of interest in certain areas of mathematical physics, e.g.\ see references in \cite{BH}. 
 There are four Stone-Hausner  axioms on this `convexity operation' or `mixture'.  Three 
 are the obvious ones: 1)\ $xtx = x$ (self-combination), 2)\ $xty = y (1-t)x$ (commutativity), 3) If  $t \in (0,1]$ and $xty = zty$ then  $x = z$ (cancellation law).
  The final axiom is an `associative law', namely 
 4)\  $(xty)sz = xp(yrz)$ for the obvious scalars $p,r$.  If these four axioms hold then the  Stone-Hausner theorem asserts that $K$ is abstract affinely isomorphic to a convex subset of a 
  vector space. Hausner's proof is to show that 1)\ any abstract convex set $K$ (Hausner calls these mixture spaces) is `affinely' embedded in an `abstract  cone' $\cC$, and 2)\ any abstract  cone may be
  `affinely' embedded in a vector space $X_K$ as a concrete cone.   Indeed for 1), $\cC$ is $K \times (0,\infty)$ with one point adjoined, namely the `zero element' of the cone. 
  The rough or guiding idea for  2)\ is to note that a cone is a semigroup under addition in such a way that the enveloping group (the Grothendieck group of the semigroup), is a vector space.    (We remark that the axioms in the characterization of convex sets 
  in \cite{Gud} for example are different but include the very strong requirement that affine maps separate points, which 
  makes the proof rather trivial, although somewhat interesting.)

  \subsection{The universal vector space $X_K$ of a convex set}  \label{wiacu}  If $K$ is a convex set let $\cC$ be its `generated cone' above, namely $K \times (0,\infty)$ with one point adjoined, namely the `zero element' of the cone.  This has base $K$, and an obvious well defined addition  $sk + t k' = (s+t) [\frac{s}{s+t} k + \frac{t}{s+t} k']$, for $s,t \geq 0$ and $k, k' \in K$.    Let $X_K$ be the Grothendieck group of $\cC$.  Then $(X_K,\cC)$ is an ordered vector space and $X_K = \cC - \cC$.  
   If we desire a complex vector space generated by $K$ then one takes the universal complex vector space of $K$ to be the complexification $(X_K)_c$ of $X_K$.  This is a $*$-vector space with the obvious involution (making $X_K$ the selfadjoint part). 
  The base function of $X_K$ (or its complexification) is the extension in Lemma \ref{a}
 below  of the constant 1 function on $K$.  It   takes $x \in \cC \setminus \{ 0 \}$ to the unique scalar $c$ such that $x/c \in K$.  
 
  A \emph{hyperplane} in a vector space $V$ will be a set of the form $\{x \in V :  f(x) = 1\}$ for a linear functional $f$ on $V$.
  It is an exercise that  a selfadjoint subset $K$ of a $*$-vector space  lies in a real hyperplane not passing through 0 iff it lies in a complex hyperplane not passing through 0. 
   There is a well known trick to replace a convex set $K$ in a vector space $V$ which is in no hyperplane not passing through 0, by an affinely isomorphic convex set which is in such a hyperplane. 
Namely, replace $K$ by $K \times \{ 1 \}$ in $V \oplus \bF$.  

We have the following universal property:
  
  \begin{lemma} \label{a} Let $K$ be a convex set. \begin{enumerate}
\item [{\rm (1)}] Any affine map  $f : K \to V$  into a real vector space $V$, has a unique real linear extension $\tilde{f} : X_K \to V$. 
 If further $V$ is an ordered space and $f$ is positive then so is $\tilde{f}$. 
\item [{\rm (2)}] Any affine map  $f : K \to V$  into a complex vector space $V$, has a unique complex  linear extension $\tilde{f} : (X_K)_c \to V$. 
Also $\tilde{f}$ is selfadjoint if $V$ is a $*$-vector space and $f$ maps into $V_{\rm sa}$. 
\item [{\rm (3)}] If $f$ is  one-to-one and maps into a hyperplane not passing through 0 then the extension $\tilde{f}$  is one-to-one
(in the complex case we assume here that $V$ is a $*$-vector space and $f$ maps into $V_{\rm sa}$).  If in addition the cone $\cC$ in $V$ coming from $f(K)$ (that is $\cC = \bR_+ \, f(K)$) generates $V$, so that  $V = \cC - \cC$, then  $\tilde{f}$ is a (surjective) isomorphism onto $V$.   \end{enumerate} 
   \end{lemma}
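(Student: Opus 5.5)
The plan is to build $\tilde f$ in two steps. First extend $f$ to the generated cone $\cC$, then pass to the Grothendieck group $X_K = \cC - \cC$.

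For (1), define $F : \cC \to V$ by $F(0) = 0$ and $F(tk) = t f(k)$ for $t > 0$, $k \in K$. This is well defined since each nonzero element of $\cC$ is uniquely of the form $tk$. Affineness of $f$ gives additivity of $F$ directly from the addition formula
\[ sk + tk' = (s+t)\Big[\tfrac{s}{s+t} k + \tfrac{t}{s+t} k'\Big], \]
and positive homogeneity is built in. An additive, positively homogeneous map from $\cC$ into the group $V$ extends uniquely to an additive map on the Grothendieck group via $\tilde f(x - y) = F(x) - F(y)$. This is well defined because $x - y = x' - y'$ in $X_K$ means $x + y' + z = x' + y + z$ for some $z \in \cC$, and then additivity together with cancellation in $V$ gives $F(x) - F(y) = F(x') - F(y')$. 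Real linearity follows from positive homogeneity on $\cC$ and the identity $\tilde f(-w) = -\tilde f(w)$. Uniqueness holds since $\cC$, hence $K$ after scaling, spans $X_K$. If $V$ is ordered and $f(K) \subseteq V_+$, then $F(\cC) \subseteq V_+$, which is positivity of $\tilde f$.

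For (2), regard $V$ as a real space, apply (1), and complexify: set $\tilde f(x + iy) = \tilde f(x) + i \tilde f(y)$ on $(X_K)_c$. This is the unique complex linear extension. If $f(K) \subseteq V_{\rm sa}$, then the real extension maps $X_K$ into $V_{\rm sa}$ (since $V_{\rm sa}$ is a real subspace), so $\tilde f(x + iy)^* = \tilde f(x) - i \tilde f(y) = \tilde f((x + iy)^*)$.

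For (3), let $\varphi$ be a linear functional with $\varphi = 1$ on $f(K)$. In the complex case I would replace $\varphi$ by a real functional on $V_{\rm sa}$, using the exercise that a selfadjoint set lies in a complex hyperplane missing $0$ iff it lies in a real one. Suppose $\tilde f(x - y) = 0$ with $x = sk$ and $y = tk'$ in $\cC$. Applying $\varphi$ gives $s = t$. If $s = 0$ then $x = y = 0$. Otherwise $f(k) = f(k')$, so $k = k'$ by injectivity, and $x = y$. Hence $\tilde f$ is injective on $X_K$.

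In the complex case, if $\tilde f(x + iy) = 0$ with $x, y \in X_K$, then $\tilde f(x)$ and $\tilde f(y)$ lie in $V_{\rm sa}$. Comparing selfadjoint parts, using $V = V_{\rm sa} \oplus i V_{\rm sa}$, gives $\tilde f(x) = \tilde f(y) = 0$, so $x = y = 0$. For surjectivity, note $\tilde f(\cC_K) = \bR_+ f(K)$, so if this cone generates $V$ then $\tilde f$ is onto; in the complex case the image contains the span of the cone.

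The main subtlety is the well-definedness on the Grothendieck group and verifying additivity of $F$ on $\cC$ when one summand is $0$. Both are routine.
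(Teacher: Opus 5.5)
Your proposal is correct and takes the same approach as the paper. The paper's proof only defines $\tilde f(cx - dy) = cf(x) - df(y)$ and leaves well-definedness, uniqueness, injectivity and the rest to the reader; your route through the additive, positively homogeneous map on $\cC$ and then the Grothendieck group, with the base-functional argument for injectivity and $V_{\rm sa} \cap iV_{\rm sa} = \{0\}$ in the complex case, fills in exactly those omitted details.
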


\begin{proof}    Indeed define the extension to take $c x - d y$ to $c f(x) - d f(y)$  for $x,y \in K, c,d \geq 0$.  We leave it to the reader (this is a well known 
trick in convexity, see e.g.\ the proof of Lemmas 2.1 or 2.4 in \cite{Breg}) that in both the real and complex case  this is well-defined, 
and is one-to-one in  (3).  
  Indeed we leave the rest as an exercise. 
 \end{proof}

{\bf Remark.}  If $f : K \to V$ is  one-to-one we cannot assert that an extension $X_K \to V$ is one-to-one, unless $f$ maps into a hyperplane not passing through 0.    For example, consider the map
$K \subset l^1_2(\bR) \to \bR$  taking $(t,1-t) \mapsto 2t-1$ for $t \in [0,1]$.   Here $K = \{ (t,1-t) : t \in [0,1] \}$. The extension is  not even  one-to-one on the cone $\cC = \bR_+ K$ generated by $K$.  In this example $X_K \cong l^1_2(\bR)$. 

\bigskip

Not only is $X_K$ universal, but by (3) in the Lemma there is a copy of it (resp.\ its complexification) in every vector space (resp.\ $*$-vector space) $V$ in which $K$ lies in a hyperplane not passing through 0 in $V$ 
(resp.\ in $V_{\rm sa}$).  Thus we will often identify a convex set with its affinely isomorphic image in the vector space $X_K$ (or its complexification).

\subsection{What is a  compact convex set?}   \label{wiacc}  Now that we understand abstract convex sets, one may ask for a topological variant of the Stone-Hausner theorem, namely an abstract characterization of compact convex sets in a TVS. Or, in an LCTVS.   In view of the Stone-Hausner theorem henceforth we have no need of abstract convex sets or abstract cones, 
but simply consider a convex set $K$ which is also a compact Hausdorff topological space.   
  
We say that a convex set is {\em topologically convex} if  it has a Hausdorff topology with respect to which the convexity operation  $K \times K \times [0,1] \to K$ is continuous.

\begin{thm} \label{lmjot}  {\rm (Lawson, Madison, Jamison, O'Brien and Taylor \cite{Law,LM,JOT})}\  If $K$ is a compact topologically convex set then  $K$ may be affine homeomorphically embedded as a compact  convex set in a Hausdorff 
TVS.  If further $K$ is locally convex (that is, has a basis of convex neighborhoods) then 
 $K$ may be affine homeomorphically embedded as a compact  convex set in a Hausdorff 
LCTVS.   \end{thm}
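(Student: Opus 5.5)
Both assertions will be realised inside the universal space $X_K$ of Subsection \ref{wiacu}, with $K$ identified with its image there as permitted by Lemma \ref{a}. Write $\cC = \bR_+ K$ and $X_K = \cC - \cC$, and let $\phi$ be the base functional, so that $K = \{x \in \cC : \phi(x) = 1\}$. The task is to construct a vector topology $\tau$ on $X_K$ whose relative topology on $K$ equals the given compact topology. Compactness of the image is then automatic. Because the original topology is Hausdorff, it is enough that $\tau$ be Hausdorff on all of $X_K$.

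\textbf{Step 1: topologize the cone.} Give $\cC$ the topology that makes the map
\[
[0,\infty) \times K \to \cC, \quad (t,k) \mapsto tk
\]
a quotient map, i.e.\ the cone over $K$ with apex $0$. Compactness of $K$ makes this quotient well behaved: truncated cones $\{tk : t \le R\}$ are compact Hausdorff, and the apex has a neighborhood base of sets $\{tk : t < \epsilon\}$. Using the continuity of the mixture together with compactness, show that addition $\cC \times \cC \to \cC$ and scalar multiplication $[0,\infty) \times \cC \to \cC$ are continuous. For addition near the apex, use the formula
\[
sk + tk' = (s+t)\Bigl[\tfrac{s}{s+t}k + \tfrac{t}{s+t}k'\Bigr]
\]
together with a uniform (compactness) argument for the case $s+t \to 0$. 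This makes $\cC$ a topological cancellative cone.

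\textbf{Step 2: pass to $X_K$.} Give $X_K$ the quotient topology from the subtraction map $\cC \times \cC \to X_K$, $(x,y) \mapsto x - y$. Alternatively, take the finest vector topology making $\cC \hookrightarrow X_K$ continuous. One must then verify three things: that this is a vector topology, that its restriction to $K$ is the original topology, and that it is Hausdorff.

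\begin{itemize}
\item The subset $\{x - y : \phi(x), \phi(y) \le R\}$ is the continuous image of a compact set. The restriction of subtraction to the corresponding compact subset of $\cC \times \cC$ is a closed map onto a compact Hausdorff space, provided the equivalence relation $x - y = x' - y'$ (i.e.\ $x + y' = x' + y$) is closed. That relation is closed by continuity of addition in $\cC$ and the cancellation law.
\item Hence $X_K$ is a $k_\omega$-type union of compact Hausdorff pieces $B_R$. On each piece the topology is the compact Hausdorff quotient, so $K$ embeds homeomorphically.
\item The inductive limit topology is Hausdorff. For a countable union of compact Hausdorff spaces with continuous algebraic operations, it is a group and vector topology by the standard $k_\omega$ (Graev) argument.
\end{itemize}

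I expect this step to be the main obstacle. The delicate points are continuity of addition in the inductive limit topology and the closedness of the equivalence relation; this is where the cited work of Lawson, Madison, Jamison, O'Brien and Taylor is really needed. If the $k_\omega$ argument is troublesome, I would instead define $\tau$ directly by a neighborhood base at $0$. The base would consist of sets $\bigcup_n (U_1 + \cdots + U_n)$, where each $U_i = \{x - y : x, y \in W_i\}$ and each $W_i$ is a neighborhood of the apex in $\cC$ chosen so that $W_{i+1} + W_{i+1} \subseteq W_i$. I would then check the usual axioms for a vector topology.

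\textbf{Step 3: the locally convex case.} Now assume $K$ has a basis of convex neighborhoods, and replace $\tau$ by the finest \emph{locally convex} topology making $K \hookrightarrow X_K$ continuous. Its continuous dual is the space of linear extensions, as in Lemma \ref{a}, of continuous affine functions on $K$. The question is whether these separate points of $K$ and recover the topology of $K$.

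Local convexity gives this. For $k \ne k'$, pick a convex closed neighborhood $U$ of $k$ missing $k'$. In the TVS built in Step 2, the closed convex hull arguments give a continuous affine function separating $k'$ from $U$. This is done by a Hahn--Banach argument in the locally convex topology generated by the convex sets $\overline{\rm co}$ of neighborhoods, which stay small by local convexity and compactness. Thus the weak topology induced by $A(K)$ is Hausdorff on $K$ and coarser than the compact topology, so the two coincide.

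The desired embedding is then $K \to A(K)^*$ with the weak* topology, which is an LCTVS. The map is affine and continuous, and it is injective by the separation just obtained. A continuous injection of a compact space into a Hausdorff space is a homeomorphism onto its image, so this gives the affine homeomorphic embedding of $K$ into a Hausdorff LCTVS.
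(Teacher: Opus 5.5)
Your Steps 1 and 2 reconstruct the route the paper relies on (by citation): the Lawson cone of $K$, and the quotient topology on $X_K=\cC-\cC$ induced by subtraction, as in Lemma \ref{p} and Theorem \ref{llm}. Your $k_\omega$ sketch for the TVS assertion is sound. One small correction: closedness of the relation $x+y'=x'+y$ comes from continuity of addition and the Hausdorff property of $\cC$, while cancellation is what makes it an equivalence relation. Likewise, once $A(K)$ separates points, your final map $K\to A(K)^*$ is exactly the argument of Lemma \ref{r}.

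The genuine gap is in Step 3, which carries the whole content of the locally convex half of the theorem. Your argument that continuous affine functions separate points of $K$ is either circular or unsupported.

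\begin{enumerate}
\item The finest locally convex topology making $K\hookrightarrow X_K$ continuous has as its dual precisely the linear extensions of $A(K)$. So saying that it separates points of $K$ is equivalent to the conclusion you want.
\item The TVS $(X_K,\tau)$ of Step 2 is not yet known to be locally convex, so no Hahn--Banach theorem is available in it.
\item The relative convex neighborhood $U$ of $k$ is of no direct use for separation. It lies in the hyperplane $\phi=1$, so it has empty interior in every vector topology on $X_K$.
\end{enumerate}

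What must actually be shown is that there are convex $\tau$-neighborhoods of $0$ in $X_K$ omitting $k'-k$. Such neighborhoods contain differences $x-y$ of nearby cone elements at all scales. So you need a uniform statement, for example that neighborhoods of the diagonal in $\cC\times\cC$ contain convex ones compatibly with the cone scaling. Convex neighborhoods of individual points of $K$ are not enough. Your phrase ``which stay small by local convexity and compactness'' asserts exactly this without proof, and it is delicate.

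For instance, let $E\subseteq K\times K$ be a convex neighborhood of the diagonal. The Minkowski functional of $\{a-b:(a,b)\in E\}$ is a base-norm-type seminorm for which $K\hookrightarrow X_K$ is not continuous. Concretely, for $K$ the probability measures on $[0,1]$, the element $\delta_x-\delta_0$ has seminorm at least $1$ for every $x\neq 0$.

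The missing step is the content of \cite[Theorem 1]{JOT}, or of the last section of \cite{Law}, where it is shown that the quotient topology of your Step 2 is already locally convex when $K$ is. The paper imports that result in Lemma \ref{r} and only then applies the geometric Hahn--Banach theorem. You need either to cite it or to actually prove that $\tau$ has a base of convex neighborhoods of $0$.
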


These authors of course built  on earlier work of Klee, Keimel, and many others.  Below we will treat these results  within the framework of base norm spaces (see Lemma \ref{p} and \ref{r}).  This answers the question ``What is a  compact convex set?'' as: it is  an abstract convex set which is compact and topologically convex.  Moreover 
such a set `is' a compact convex set in an LCTVS if and only if it is locally convex. 

 If $K$ is a compact topologically convex set then the cone $\cC$ above generated by $K$ (see the first lines in Subsection \ref{wiacu}),  has a canonical Hausdorff topology with respect to which it is easy to see that it is locally compact and $\sigma$-compact, and the 
 cone operations (addition and multiplication by a nonnegative scalar) are continuous.  
See \cite[Proposition 2.1]{Law}.   Indeed \cite[Corollary 2.3]{Law} says that $K$ is affinely homeomorphic to the compact 
base of a locally compact cone  (with continuous addition and scalar product) in a Hausdorff TVS. 
We give a few more details which are occasionally useful.  Indeed if $L= (0,1] \times K$ with product topology, set $\cC_0$ to be the 
1-point compactification of $L$, with the one point identified with $0$.  
 By the uniqueness of the one-point compactification, $\cC_0$ is topologically identifiable with 
 $\{ x = t k \in \cC : 0 \leq t \leq 1, k \in K \}$, 
 with the topology with basis the sets $I \times B$ for $I = (a,b)$ for $0< a < b$, and $B$ 
 chosen from a basis for $K$, together with the sets $[0,\epsilon) \times K$ for $\epsilon > 0$.  
  Set $\cC_n = [n,n+1] \times K$ for $n \in \bN$, a compact set.   We give $\cC$  the attachment space topology formed by gluing 
$Y = \cC_0$ to $Z = [1,\infty) \times K$ along $A = \{ 1 \} \times K$.   Then $\cC$ is Hausdorff, normal, locally compact, and $\sigma$-compact
(consider the compact $(\cC_n)_{n \geq 0}$ above), 
and $K$ may be identified with $A$ as a compact subset 
and base of $\cC$.    Note that $Z \setminus A = (1,\infty) \times K$ (resp.\ $\cC_0$) may be identified as an open (resp.\ compact) subset 
of $\cC$, and the latter two sets partition $\cC$. 
Indeed $\cC \setminus 0$ is homeomorphic to $(0,\infty) \times K$, the latter with product topology.   

We will call $\cC$ above together with this topology the {\em Lawson cone} of $K$.   We give it this name
even though its construction  is not attributable to Lawson, to honor 
Lawson's beautiful work (partly with coauthors).   

\begin{lemma} \label{o}  If $K$ is a compact topologically convex set then 
the base function on the Lawson cone $\cC$ of $K$ is continuous
on $\cC$.  
 \end{lemma}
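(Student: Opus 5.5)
The plan is to use the explicit description of the topology on the Lawson cone $\cC$ given above. $\cC$ is partitioned into the compact set $\cC_0 = \{ tk : 0 \leq t \leq 1, k \in K \}$ and the open set $(1,\infty) \times K$. Also $\cC \setminus \{0\}$ is homeomorphic to $(0,\infty) \times K$ with the product topology. Write $\beta$ for the base function. It takes $0 \mapsto 0$ and $tk \mapsto t$ for $t > 0$, $k \in K$; this is well defined because $K$ is a base, so the representation $x = tk$ of a nonzero $x$ is unique. Continuity is local, so I will check it separately at points of $\cC \setminus \{0\}$ and at $0$.

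\emph{Away from $0$.} The set $\cC \setminus \{0\}$ is open in $\cC$: $\{0\}$ is closed because $\cC$ is Hausdorff. Under the homeomorphism $\cC \setminus \{0\} \cong (0,\infty) \times K$, the function $\beta$ becomes the coordinate projection $(t,k) \mapsto t$, which is continuous for the product topology. So $\beta$ is continuous at every nonzero point. To justify the homeomorphism itself, one checks that the attachment (quotient) topology restricted to $(0,1] \times K$ and to $[1,\infty) \times K$ is the product topology on each piece. Both pieces are closed in $\cC \setminus \{0\}$ and they agree on the overlap $\{1\} \times K$, so the pasting lemma gives the product topology on the union. This is the step I expect to need the most care. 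However, the paper has already asserted the homeomorphism, so I would simply invoke it.

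\emph{At $0$.} Fix $\epsilon \in (0,1)$. In the topology on $\cC_0$ described above (the one-point compactification of $(0,1] \times K$), the set $U = [0,\epsilon) \times K = \{ tk : 0 \leq t < \epsilon \}$ is a basic open neighbourhood of $0$ in $\cC_0$. It is disjoint from $A = \{1\} \times K$, so it is also open in the attachment space $\cC$. Indeed, its preimage in the disjoint union $Y \sqcup Z$ is $U$ itself in $Y$ and empty in $Z$, hence open. On $U$ we have $0 \leq \beta < \epsilon$, which gives continuity at $0$.

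Combining the two cases, $\beta$ is continuous on all of $\cC$.
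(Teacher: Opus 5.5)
Your proof is correct and follows essentially the same route as the paper. Both arguments treat nonzero points via the homeomorphism $\cC \setminus \{0\} \cong (0,\infty) \times K$, under which the base function becomes the first coordinate, and treat the point $0$ via the neighbourhoods $[0,\epsilon) \times K$; the paper phrases this with nets rather than open sets, and it simply asserts the homeomorphism that you additionally justify by the pasting lemma.
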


\begin{proof}   Suppose that $c_t k_t  \to c k = x$ in $\cC$, where $c_t, c \geq 0, k_t, k \in K$.    
 If $x \neq 0$ then $x \in I \times K$ for some compact interval $I$ in $(0,\infty)$, so that without loss of 
 generality $(c_t)$ is bounded away from $0$.   Since  $\cC \setminus 0$ is homeomorphic to $(0,\infty) \times K$ it follows that
 $c_t \to c$.  Similarly if  $x = 0 \in [0,\epsilon ) \times K$ for $\epsilon > 0$ then $c_t k_t \in [0,\epsilon ) \times K$ eventually. 
 So $c_t \to 0$.  
\end{proof}  

The mechanics in the last proof are often used in what follows. 

  \subsection{The universal Banach space $X_K$ of a compact convex set}  \label{wiaccu} 
  
  We will now need to get into base norm spaces, so we give some more definitions here (see \cite{BH} for a survey  in the classical case, and many more details.)
  A real base norm space is  an ordered normed space $X$ with closed cone $X_+$, with a  convex base $K$ in Ball$(X) \cap 
 X_+$, such that    Ball$(X)  \subseteq t \, 
{\rm co} (K \cup (-K) )$ for all $t > 1$. A real  {\em dual base norm space}  is
a real base norm space with a Banach space predual such that the base $K$ is 
weak* closed (and hence weak* compact).  It follows that Ball$(X)  = 
{\rm co} ( K \cup -K )$, and that the base function (that is, the unique functional on $X$ which is $1$ on $K$) is weak* continuous.    
The dual Banach space of a real unital function space is the generic real  dual  base norm space.   Equivalently, the dual  base norm spaces are exactly (up to appropriate isomorphism) the spaces $A(K)^*$ for a compact convex set $K$
in an LCTVS.  The dual  base of $A(K)^*$ is $\delta(K)$, where $\delta : K \to A(K)^*$ is the canonical map. The generic real base norm space ``is" the predual of a dual real function system, with the base 
corresponding to the normal state space.   Similar results hold for complex base norm spaces and their duality with complex function systems
\cite[Section 3]{BH}.   This is based on the complexification of a real base norm space using the \emph{dual Taylor norm} which we describe there.  
Indeed the theory of complex base norm spaces is a simple add-on to the classical theory of real base norm spaces. Essentially everything in the complex theory follows quickly from the 
real theory, together with the fact above that the complex base norm is completely determined, via the dual Taylor norm,  by the classical `base norm' on the selfadjoint part.  Or in other words, the complex case is just a standard complexification of the real case. 
A base morphism is a positive linear map $u : X \to Y$ between base norm spaces mapping base into base.  This is equivalent (assuming $u$ positive) to $f_Y \circ u = f_X$, where $f_X$ and $f_Y$ are the base functions.  
A bijective base morphism between base norm spaces is an isometric order isomorphism by Lemma \ref{bne}.  Similarly 
a  bijective nc base morphism between nc base norm spaces is a completely isometric order isomorphism. 

  If $K$ is a convex set then there is a canonical seminorm $\| \cdot \|_K$ that one can put on  its universal space $X_K$ from the  earlier subsection, namely the Minkowski functional of co$(K \cup (-K))$.  If this is a norm then
  $X_K$ is a pre-base normed space in the sense of \cite[Section 2]{BH}.   Indeed it is a norm under the conditions of the next result. 
  As in the last subsection, in the following $K$ is  a compact  topologically convex set, 
for example, a compact convex set in a TVS.  Then 
 the Lawson cone $\cC$ above  is Hausdorff, locally compact, and $\sigma$-compact, and has $K$ as a base.  
 Suppose that $K$ has a basis $\cB$ of open (or open and convex) sets.  Then sets of form $I \cdot B$, for $I = (a,b)$ for $0< a < b$, and $B \in \cB$, is a basis for $\cC \setminus 0$, while at $0$ we take $[0,\epsilon) \cdot K$ to be the local basis.  
The Grothendieck group $X_K$ of $\cC$ is as we said earlier, a real ordered vector space with positive cone $\cC$, and base $K$ for $\cC$.  Of course $X_K = \cC - \cC$.

\begin{lemma} \label{p}  If $K$ is  a compact  topologically convex set then $(X_K, \| \cdot \|_K)$ 
is a real base norm space, with norm closed base $K$, and base function 
the unique linear extension of the base function on $\cC$ above.    
Moreover, $X_K$ has a Hausdorff TVS topology extending the topology on $K$.  \end{lemma}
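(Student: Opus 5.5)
The proof splits into two parts: first show $\|\cdot\|_K$ is a norm making $X_K$ a base norm space with closed base $K$, then build the TVS topology from the Lawson cone. Throughout, $X_K = \cC - \cC$, and the base function $\varphi$ (the linear extension from Lemma \ref{a} of the constant function $1$ on $K$) satisfies $|\varphi(x)| \leq \|x\|_K$, because $\varphi = \pm 1$ on $\pm K$.

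\emph{Norm property.} The natural route is to produce enough bounded linear functionals on $X_K$ to separate points. Any continuous affine $g : K \to \bR$ extends linearly to $\tilde g$ on $X_K$ by Lemma \ref{a}, with $|\tilde g| \leq \|g\|_\infty \|\cdot\|_K$. However, $A(K)$ need not separate points when $K$ is not locally convex (Roberts' example), so this route fails in general. I will instead argue directly with compactness.

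Suppose $x = c k - d k'$ with $c, d \geq 0$ and $\|x\|_K = 0$. Then for each $\epsilon > 0$ we can write $x = \lambda a - \mu b$ with $a, b \in K$ and $\lambda + \mu < \epsilon$. Equating in the cone gives
\[
c k + \mu b = d k' + \lambda a .
\]
Applying $\varphi$ shows that $c - d$ is arbitrarily small, hence $c = d$. Now, using the cancellation law together with compactness of $K$, pass to a convergent subnet of $(a,b)$ and use continuity of the cone operations on the Lawson cone, whose Hausdorffness is known. Since $\mu b, \lambda a \to 0$, continuity of addition yields $c k = d k'$ in $\cC$, so $x = 0$. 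This argument also shows that $\cC$ is $\|\cdot\|_K$-closed. Next, $\mathrm{Ball}(X_K) \subseteq t\,\mathrm{co}(K \cup -K)$ for $t > 1$ holds by the definition of the Minkowski functional, and $K \subseteq \mathrm{Ball} \cap X_+$. Norm-closedness of $K$ follows because a norm-limit point of $K$ is a limit in the cone topology, via the same decomposition argument, and $K$ is compact there.

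\emph{TVS topology.} Consider the map $\cC \times \cC \to X_K$, $(p, q) \mapsto p - q$. Define $\tau$ on $X_K$ as the finest linear topology for which the restrictions to the compact sets $\cC_n - \cC_n$ are continuous. Concretely, take as neighbourhoods of $0$ the absorbing balanced sets $U$ such that $U \cap (\cC_n - \cC_n)$ is a relative neighbourhood of $0$ for each $n$, in the quotient topology from $\cC_n \times \cC_n$.

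The key step, and the expected main obstacle, is showing that $\tau$ is Hausdorff and restricts to the original topology on $K$, without local convexity. My plan is to obtain Hausdorffness and the embedding from the Lawson--Madison--Jamison--O'Brien--Taylor embedding (Theorem \ref{lmjot}). That theorem gives an affine homeomorphic copy of $K$ inside a Hausdorff TVS $W$, and after the $K \times \{1\}$ trick it lies in a hyperplane not through $0$. Lemma \ref{a}(3) then gives an injective linear $\tilde f : X_K \to W$. Pulling back the topology of $W$ gives a Hausdorff vector topology on $X_K$ whose restriction to $K$ is the original one, because $f$ is a homeomorphism onto its image. I will then take $\tau$ to be this pullback, or its supremum with the topology described above. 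It remains to check that $\tilde f|_{\cC}$ is continuous for the Lawson topology; this uses Lemma \ref{o} and continuity of scalar multiplication in $W$.
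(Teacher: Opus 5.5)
Your argument is correct. For the norm and closedness parts it is essentially the paper's. The paper checks the ``linearly bounded'' condition of \cite{BH} for $z=tx-(1-t)y$: it uses the base function to force $t=s=1/2$, then a convergent-subnet argument with continuity of the convexity operation. You instead show directly that $\|x\|_K=0$ forces $x=0$: the base function gives $c=d$ at once, and then $\lambda a,\mu b\to 0$ in the Lawson cone. Your closedness sketch is the paper's identity $k_n+dz+d_ny_n=cy+c_nx_n$ passed to the limit.

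The genuine difference is the TVS assertion. The paper invokes \cite[Theorem 3.2]{LM} and its proof, which gives a \emph{canonical} topology: the quotient topology on $X_K$ induced by subtraction $q:\cC\times\cC\to X_K$ on the Lawson cone. You instead take an affine homeomorphic copy of $K$ in a Hausdorff TVS $W$ from Theorem \ref{lmjot}, move it into the hyperplane $W\times\{1\}$ of $W\oplus\bR$, and pull back the topology along the linear extension, which is injective by Lemma \ref{a}(3). This is valid and much shorter. An injective linear map into a Hausdorff TVS induces a Hausdorff vector topology, and on $K$ it is the original topology because the embedding is a homeomorphism onto its image. For the statement as given you do not even need your final check that $\tilde f|_{\cC}$ is continuous.

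What your route does not give is canonicity. The pulled-back topology depends on the chosen embedding. The paper later relies on the topology being the quotient topology for $q$, for example in Lemma \ref{ab}(4) to extend every continuous affine map on $K$ continuously, and when it speaks of ``the canonical TVS topology'' in Section \ref{wiamccs}. Your optional supremum step could recover this: once $\tilde f\circ q$ is continuous, the finest vector topology making $q$ continuous is Hausdorff, agrees with the original topology on $K$, and has that universal property. However, your ``concrete'' neighbourhood description of it is not shown to define a vector topology. Also, continuity on the diagonal pieces $\cC_n\times\cC_n$ alone does not control $q$ on all of $\cC\times\cC$.

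Finally, both routes rest on Lawson--Madison. The paper presents Lemma \ref{p} as a base norm recasting of Theorem \ref{lmjot}, so deriving it from that cited theorem is logically fine but reverses the intended direction.
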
 

\begin{proof}  First we show that it is a pre-base norm space
in the sense of \cite[Section 2]{BH}.  For the `linearly bounded' condition, 
suppose that $t \in (0,1), x,y \in K$ and 
$z = tx - (1-t)y \neq 0$, such that for all $n \in \bN$ there exists $s \in [0,1]$ and $x',y' \in K$ with $nz = sx'  - (1-s) y'$.   That is, $ntx + (1-s) y' = sx' + n(1-t) y$.   Then $\frac{nt}{nt + 1-s} x + \frac{1-s}{nt + 1-s} y'$ is in $K$. 
But for some $k'' \in K$ we have 
$$\frac{s}{nt + 1-s} x' + \frac{n(1-t)}{nt + 1-s} y = \frac{s + n(1-t)}{nt + 1-s}  k'',$$  and this is in $K$ iff $s + n(1-t) = nt + 1-s$. 
So $2s + n -1 = 2nt$, or $s = n(t -1/2)+ 1/2$. 
If $t < 1/2$ or $t > 1/2$ we obtain a contradiction for large enough $n$.  If $t = 1/2$ then $s= 1/2$, then we have $\frac{1}{n+1}(nx + y') = 
\frac{1}{n+1}(x' + n y)$.  Write $y' = y_n, x' = x_n$, and choose converging subnets of these with limits $y''$ and $x''$.  Using `continuity of convexity' in $K$ we have $x = y$, and so $z = 0$, a contradiction.   Thus $X$ is a normed space with the base norm, and is 
a pre-base norm space.   

Suppose $k_n \to x = cy - dz$ in norm, where $k_n, y, z  \in K,$ and $c, d \geq 0$.   Then $\| k_n - x \| \to 0$. So we can write $k_n - cy + dz = c_n x_n - d_n y_n$ with
$c_n, d_n \to 0$.   Thus $k_n  + dz + d_n y_n =  cy + c_n x_n$. As before we divide by a scalar so that the last equality becomes a convex combination in $K$.   Suppose that a subnet  $k_{n_t} \to v \in K$ in the topology on $K$.  
Since $c_n, d_n \to 0$, in the limit we have 
$v + dz  =  cy$, using that $K$ is topologically convex.   So $v = x \in K$.   So $K$ is closed, and moreover the topology on $K$ is coarser than the norm topology. 
A similar argument works if we replace  $k_n$ above by $t_n k_n$ with $t_n \geq 0$. Note that applying the base function shows that $(t_n)$ is convergent to $t$ say, so bounded. We obtain $x = cy-dz = tv \in \cC$.  So $\cC$ is closed.  Thus $X_K$  is a 
base norm space with base $K$.  
The assertion about the TVS topology follows from \cite[Theorem 3.2]{LM} and its proof.  This topology is 
the quotient topology on $X$  induced by  the `subtraction'  map $q : \cC \times \cC \to X$, where $\cC$ is
the Lawson cone.  (In the LCTVS case see also the statement of Theorem \ref{llm}, or the first paragraph of the 
proof of Lemma \ref{r}.) 
\end{proof} 

  If we desire a complex  space generated by $K$ then one takes the universal complex normed space of $K$ to be the dual
  Taylor   complexification  of $X_K$.   This may be identified with the projective tensor product 
  $X_K \hat{\otimes}_{\bR} l^2_2(\bR)$.  We write this as $(X_K)_c$ or sometimes simply as $X_K$ when there is no confusion. 
  This is a complex base norm space with norm closed base $K$, if  $K$ is  compact  topologically convex \cite{BH}, and is also  
  a $*$-TVS with the `product topology', whose selfadjoint part is $X_K$.
  
The idea of a universal Banach space $X_K$ of a convex set, and many of the  ideas in the present subsection of our paper,
 have  appeared in various forms or disguises in the literature over 
the decades.   See e.g.\ \cite{Gud} for a representative example.  
We have the following universal property of $X_K$:
  
  \begin{lemma} \label{ab} Let $K$ be a compact topologically convex set, or more
  generally a convex set for which $(X_K, \| \cdot \|_K)$ is a base normed space. \begin{enumerate}
\item [{\rm (1)}] Any bounded affine map  $f : K \to V$  into a real normed space $V$, has a unique bounded real linear extension $\tilde{f} : X_K \to V$, and we have
$\| \tilde{f} \| =  \| f \|_\infty$.   
\item [{\rm (2)}] Any bounded affine map  $f : K \to V_{\rm sa}$  into a complex $*$-normed space $V$, has a unique complex  linear bounded extension $\tilde{f} : (X_K)_c \to V$, with 
$\| \tilde{f} \| =  \| f \|_\infty$.  
\item [{\rm (3)}] If $(V,K')$ is a (real or complex) base norm space and $f : K \to K'$ is affine then $\tilde{f}$ is a (contractive positive) base morphism.
 \item [{\rm (4)}]  If $K$ is compact topologically convex and $f : K \to V$  is affine and continuous into a 
 Hausdorff TVS $V$, 
 then its unique extension $\tilde{f}$ into $V$ from  Lemma {\rm \ref{a}} (in both the real and the complex cases there) is also continuous (with respect to the TVS topology in Lemma {\rm \ref{p}} and the remark after it). 
 \item [{\rm (5)}]  $A_{\rm b}(K,\bR) \cong X_K^*$ and $A_{\rm b}(K,\bC) \cong ((X_K)_c)^*$ isometrically and order isomorphically via a map taking the identity $1$ to the base function for $K$.   Here $A_{\rm b}(\cdot)$ is the bounded affine functions. 
 \end{enumerate} 
   \end{lemma}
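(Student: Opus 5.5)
Parts (1)–(3) and (5) all come from one fact: the unit ball of $(X_K,\|\cdot\|_K)$ is essentially ${\rm co}(K\cup -K)$. Part (4) is handled separately through the quotient description of the TVS topology in Lemma \ref{p}.

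For (1), apply Lemma \ref{a}(1) to get the unique real linear extension $\tilde f$, defined by $cx-dy\mapsto cf(x)-df(y)$. For the norm, take $z\in X_K$ with $\|z\|_K<1$ and write $z=tx-(1-t)y$ with $x,y\in K$ and $t\in[0,1]$. This is possible after scaling by some $s>1$, because Ball$(X)\subseteq s\,{\rm co}(K\cup -K)$ and a convex combination of points of $K$ and $-K$ collapses, by convexity of $K$, to the form $tx-(1-t)y$. Then
\[ \|\tilde f(z)\|\le t\|f(x)\|+(1-t)\|f(y)\|\le \|f\|_\infty, \]
so $\|\tilde f\|\le \|f\|_\infty$. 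The reverse inequality holds because $K\subseteq{\rm Ball}(X_K)$. Any bounded linear extension is in particular a linear extension, so uniqueness follows from Lemma \ref{a}.

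For (2), first apply (1) with values in $V_{\rm sa}$ and then extend complex linearly, as in Lemma \ref{a}(2). To get $\|\tilde f\|=\|f\|_\infty$ on the dual Taylor complexification $(X_K)_c$, I would use the description of its unit ball from \cite{BH}: it is the closed absolutely convex hull of $\{e^{i\theta}K\}\cup\{-e^{i\theta}K\}$, or equivalently the projective tensor norm with $l^2_2(\bR)$. By that description it suffices to bound $\|\tilde f\|$ on elements $\alpha x - \beta y$ with $|\alpha|+|\beta|\le 1$, where we again get $\le\|f\|_\infty$. If the paper only provides the norm on $(X_K)_c$ via the real part, I would instead use that this complexification is the maximal (projective) one, so the real contraction $X_K\to V$ has a complex extension of the same norm. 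I expect this complex norm estimate to be the only delicate point in (1)–(3) and (5).

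For (3), $\tilde f$ maps $\cC=\bR_+K$ into $\bR_+K'$, so it is positive and maps base into base; contractivity follows from (1) and (2) since $K'\subseteq{\rm Ball}(V)$. For (5), restriction $\varphi\mapsto\varphi|_K$ maps $X_K^*$ into $A_{\rm b}(K,\bR)$. By (1), with $V=\bR$ (resp.\ $\bC$), it is an isometric bijection with inverse $f\mapsto\tilde f$. It preserves order because $\varphi\ge 0$ on $\cC$ if and only if $\varphi\ge 0$ on $K$. It sends the base function to $1$ by definition.

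Part (4) is the main obstacle. By the proof of Lemma \ref{p}, the TVS topology on $X_K$ is the quotient topology for $q:\cC\times\cC\to X_K$, where $\cC$ is the Lawson cone. So it suffices to show that $\tilde f\circ q:(c x,dy)\mapsto cf(x)-df(y)$ is continuous on $\cC\times\cC$, and it suffices to show that $g: c k\mapsto c f(k)$ is continuous on $\cC$. Away from $0$ this is clear, since $\cC\setminus 0\cong(0,\infty)\times K$ and scalar multiplication in $V$ is continuous. At $0$, suppose $c_t k_t\to 0$. Then $c_t\to 0$ by Lemma \ref{o}, and $f(K)$ is compact, hence bounded in the TVS $V$, so $c_t f(k_t)\to 0$. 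The complex case then follows because the topology on $(X_K)_c$ is the product topology on $X_K\oplus iX_K$.
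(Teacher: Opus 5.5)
Your proposal is correct and follows the paper's proof essentially step for step. Part (1) uses the base-norm infimum over representations $ck-dk'$; part (2) uses the norm-preserving complexification of real maps for the projective (dual Taylor) norm, and your fallback there is exactly the paper's argument. Parts (3) and (5) go via restriction to $K$, and part (4) via the quotient topology from the Lawson cone. In (4), your reduction to continuity of $ck\mapsto cf(k)$ on $\cC$ is a slightly cleaner version of the paper's case analysis. Your remark that $f(K)$ is compact, hence bounded in $V$, so that $c_t f(k_t)\to 0$ when $c_t\to 0$, makes explicit a point the paper leaves implicit.
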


\begin{proof}  (1)\  The extension $\tilde{f}$ is done in Lemma \ref{a}. Note that $\|  \tilde{f} \| \geq \| f \|_\infty$ since any element of $K$ has norm 1 in $X_K$. We have $\| c f(k) - d f(k') \|  \leq (c + d) \| f \|_\infty$.
Taking an infimum we have $\| \tilde{f} (x) \|  \leq \| x \| \| f \|_\infty$.  Thus $\| \tilde{f} \| =  \| f \|_\infty$.  

(2)\ Let $\tilde{f} = g_c$ where $g$ is the extension in (1).  Then $\| \tilde{f} \| = \| g_c \| \leq \| g \| =  \| f \|_\infty$, as may be seen  using (1) at the start 
of \cite[Section 3]{BH}.  The reverse inequality is as in (1), so that  $\| \tilde{f} \| =  \| f \|_\infty$.  

(3)\ If $f : K \to K'$ then  $\tilde{f}$ is a base morphism.  Any base morphism  is  contractive and positive.

(4)\ In the real case, since the TVS topology on $X_K$ is a quotient topology, it suffices to check that 
the map $\cC \times \cC \to V : (ck,dk') \mapsto cf(k) - df(k')$, is continuous on the product of the Lawson cone $\cC$.
So suppose that $c_t k_t \to ck$ and $d_t k'_t \to dk$ in $\cC$.  Here $c_t, d_t, c, t \geq 0,$ and $k_t, k_t', k, k' \in K$. If both $c$ and $d$ are nonzero then we have $c_t \to c, d_t \to d, k_t \to k, k'_t \to k'$, as in the proof of Lemma \ref{o}.  Since $V$ is a TVS we have  $c_t f(k_t) - d_t f(k'_t) \to cf(k) - df(k')$ as desired.
Similarly for the other cases.  E.g.\ if $c = 0$ but $d \neq 0$ then $c_t \to 0$ as in the proof of Lemma \ref{o}.  So again 
$c_t f(k_t) - d_t f(k'_t) \to cf(k) - df(k')$ as desired.

In the complex case, the real case shows that the extension $X_K \to V_{\rm sa}$ is continuous.  Hence its complexification
is continuous $(X_K)_c \to V$.

(5)\ The canonical (restriction to $K$) map $X_K^* \to A_{\rm b}(K,\bR)$ is a surjective isometry by (1) with $V = \bR$. It is easy to see that it is an order isomorphism 
taking the base function to 1.  Similarly in the complex case. 
 \end{proof} 

Thus if $K$ is a compact convex set in a real TVS (resp.\ normed space) $V$ then there is a linear (resp.\ and continuous) map $X_K \to V$ onto $\bR_+ K - \bR_+ K$. 
If $K$  lies in a hyperplane not containing $0$ then as we saw in Lemma \ref{a} (3), this map is an isomorphism, so that $V$ contains a base norm space with base $K$.  
Thus not only is $X_K$ universal, but there is a copy of it in every TVS (resp.\ $*$-vector space) $V$ in which $K$ lies in a hyperplane not passing through 0 in $V$ 
(resp.\ in $V_{\rm sa}$).   Thus we will often identify a compact convex set with its affinely homeomorphic (by the last assertion of Lemma \ref{p}) image in the base norm space and TVS 
 $X_K$ (or its complexification).

\begin{lemma} \label{bne}  Suppose that $(X_i,K_i)$ are real or complex base norm spaces and that $K_1$ is affine 
isomorphic to $K_2$.  Then 
$X_1 \cong X_2$ isometrically and as base norm spaces.  Moreover this isomorphism is also a homeomorphism (resp. weak* homeomorphism) if in addition the $K_i$ are compact and topologically convex 
(resp.\ if $(X_i,K_i)$ are dual base norm spaces) and the isomorphism $K_1 \to K_2$ is continuous. \end{lemma}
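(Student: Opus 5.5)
Let $\theta : K_1 \to K_2$ be the affine isomorphism. The plan is to extend $\theta$ by Lemma \ref{a} to a linear map $\tilde\theta : X_1 \to X_2$ and show that it is an isometric base norm space isomorphism. I will work first in the real case, or on the selfadjoint parts; the complex case then follows by complexification via the dual Taylor norm of \cite{BH}.

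In a base norm space the base is convex, lies in a hyperplane $\{f_{X_i} = 1\}$ not through $0$, and spans $X_i$ (as $X_i = \bR_+ K_i - \bR_+ K_i$). Lemma \ref{a} (3), applied to $\theta$ viewed as a map into $X_2$, therefore gives that $\tilde\theta$ is a linear bijection with $\tilde\theta(\bR_+ K_1) = \bR_+ K_2$. Applying the same argument to $\theta^{-1}$ gives the inverse. Hence $\tilde\theta$ is an order isomorphism mapping $K_1$ onto $K_2$, and it is a base morphism in both directions.

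For the isometry, I use the standard fact that the norm of a base norm space is the Minkowski functional of ${\rm co}(K \cup -K)$. More precisely, the open unit ball is sandwiched between $t\,{\rm co}(K\cup -K)$ for $t<1$ and $t>1$, and this is the description via $\|\cdot\|_K$ from \cite{BH}. Since $\tilde\theta$ maps ${\rm co}(K_1 \cup -K_1)$ onto ${\rm co}(K_2\cup -K_2)$, it preserves Minkowski functionals. Alternatively, Lemma \ref{ab} (3) shows that $\tilde\theta$ and its inverse are both contractive base morphisms, so $\tilde\theta$ is isometric. In the complex case, $(X_i)_{\rm sa}$ is the real base norm space with base $K_i$. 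The complex norm is the dual Taylor norm determined by the real one, so the complexification of the real isometry is again isometric.

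For the topological assertions, if the $K_i$ are compact and topologically convex, Lemma \ref{ab} (4) applies with $\theta$ continuous into the TVS $X_2$, and by compactness $\theta^{-1}$ is continuous too, giving continuity of $\tilde\theta$ and its inverse.

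The weak* case is the main obstacle. The plan there is to use that in a dual base norm space $X_i$ the ball is ${\rm co}(K_i\cup -K_i)$. This set is weak* compact as the continuous image of $K_i \times K_i \times [0,1]$, using the map $(x,y,t) \mapsto tx-(1-t)y$. It follows that $\tilde\theta$ restricted to each ball is weak* continuous: it factors through this compact parametrization, and a continuous map from a compact space to a Hausdorff quotient descends. By Krein--Smulian, a linear map between dual Banach spaces that is weak* continuous on bounded sets is weak* continuous. The same argument applies to the inverse, and the complex case again follows by complexification.
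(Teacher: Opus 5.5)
Your proof is correct. For the first assertion, and for the homeomorphism in the compact topologically convex case, it is the paper's argument: linear extension by Lemmas \ref{a} and \ref{ab}, contractive base morphisms in both directions, and Lemma \ref{ab} (4) together with compactness for the inverse.

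The weak* assertion is where you genuinely diverge. The paper uses Kadison duality. The map $\theta$ induces the function system isomorphism $A(K_2) \to A(K_1)$, $g \mapsto g \circ \theta$. Since $X_i \cong A(K_i)^*$, uniqueness of the linear extension shows that $\tilde\theta$ is the adjoint of that isomorphism, hence a weak* homeomorphism.

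You instead show directly that $\tilde\theta$ is weak* continuous on ${\rm Ball}(X_1) = {\rm co}(K_1 \cup -K_1)$, and then invoke Krein--Smulian. The precise version of your ``descends'' step is this: the continuous surjection $K_1 \times K_1 \times [0,1] \to {\rm Ball}(X_1)$ goes from a compact space onto a Hausdorff space, so it is closed and hence a quotient map. Composing $\tilde\theta$ with it gives the analogous map for $K_2$ composed with $\theta \times \theta \times {\rm id}$, which is continuous.

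What each route buys:
\begin{itemize}
\item The paper's route is a one-liner given the duality machinery, and it handles the complex case uniformly through $A(K,\bC)$. However, it presupposes that a dual base norm space with its given predual is weak*-homeomorphically $A(K)^*$, and proving that fact essentially requires your Krein--Smulian argument.
\item Your route is self-contained and works directly with whatever preduals the $X_i$ carry.
\end{itemize}

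Your ``by complexification'' step tacitly uses that the weak* topology on a complex $X_i$ is the product of the weak* topology on $(X_i)_{\rm sa}$. The paper relies on this too, but you can avoid it by running the same argument directly in the complex case. The real part map is contractive for the dual Taylor norm, so ${\rm Ball}(X_i) \subseteq B_i + iB_i$ with $B_i = {\rm co}(K_i \cup -K_i)$. The set $B_i + iB_i$ is again a continuous image of the compact space $(K_i \times K_i \times [0,1])^2$, so the quotient-map and Krein--Smulian steps go through unchanged.
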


\begin{proof}   The first assertion  and the first homeomorphism result 
follows from Lemma \ref{ab}.  By the duality of compact convex sets and  function 
 systems, 
$A(K_1) \cong A(K_2)$ as function systems.  Since $X_i = A(K_i)^*$, we have $X_1 \cong X_2$ isometrically as dual base norm spaces. \end{proof}

  Lemma \ref{p} embeds a compact topologically convex set in a (canonical) TVS.    It is natural to ask when it may be embedded in an LCTVS, or when 
  $X_K$ is an LCTVS.   This was answered in \cite{JOT,Law}. 
 The following works in both the real and complex case:
  
  \begin{lemma} \label{r}  If $K$ is a compact topologically convex set which is  locally convex  then $A(K)$ separates points of $K$, and indeed (or equivalently) 
 $K$ is affinely homeomorphic to a compact convex set in a Hausdorff LCTVS.  The latter may be taken to be $A(K,\bR)^*$. 
 Indeed  $X_K$ and its complexification $(X_K)_c$ are dual base norm spaces, and have a Hausdorff LCTVS topology $\tau$ extending the topology on $K$, with respect to which $X_K \cong A(K,\bR)^*$ (resp.\ $(X_K)_c \cong A(K,\bC)^*$) via an isometric 
 base morphism and homeomorphism  with respect to $\tau$ and the weak* topology on $A(K)^*$.  
   \end{lemma}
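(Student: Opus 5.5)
The plan is to build the locally convex topology on $X_K$ directly from the convex neighborhoods of $K$, and then identify $X_K$ with $A(K,\bR)^*$ through evaluation at points of $K$.

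The main step, and the one I expect to be the obstacle, is showing that $A(K,\bR)$ separates the points of $K$. Everything after it is duality bookkeeping. I would first use Lemma \ref{p}. It makes $X_K$ a base norm space with norm-closed base $K$, and it gives a Hausdorff TVS topology on $X_K$ that extends the topology of $K$. This topology is the quotient topology coming from the subtraction map $q : \cC \times \cC \to X_K$ on the Lawson cone $\cC$.

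Next I would define a locally convex topology $\tau$ on $X_K$. Take the finest locally convex topology that is coarser than the Lawson quotient topology. Equivalently, a convex balanced absorbing set $W$ is a $\tau$-neighborhood of $0$ when $q^{-1}(W)$ is a neighborhood of $(0,0)$ in $\cC \times \cC$.

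To see that $\tau$ is Hausdorff and induces the original topology on $K$, I would use local convexity of $K$. Take $k_0 \in K$ and a convex neighborhood $U$ of $k_0$ in $K$. Consider sets of the form
\[
{\rm co}\bigl( (U - k_0) \cup \epsilon (K - K) \bigr)
\]
together with their balanced hulls. The task is to check, via the mechanics of the proof of Lemma \ref{o}, that these sets are $\tau$-neighborhoods of $0$. One also has to check that their trace on $K - k_0$ stays inside a prescribed neighborhood.

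This trace condition is where compactness and continuity of the convexity operation are used. A point of $K$ lying in ${\rm co}((U-k_0)\cup \epsilon(K-K)) + k_0$ is, up to an $\epsilon$-perturbation, a convex combination of points of $U$. Since $U$ is convex, such a point lies near $U$. Compactness of $K$ then upgrades "near $U$ for every $\epsilon$" to "in $\overline{U}$". Alternatively, one can invoke the Jamison–O'Brien–Taylor / Lawson result in Theorem \ref{lmjot} to get an affine homeomorphic embedding of $K$ into some Hausdorff LCTVS $E$. Continuous functionals on $E$, restricted to $K$, are continuous affine functions, and Hahn–Banach in $E$ shows they separate points. Either route gives that $A(K)$ separates points of $K$. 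The converse is trivial: if $A(K)$ separates points, the weak topology it induces is a compact Hausdorff topology coarser than the original one, so it equals it, and $K$ embeds in $\bR^{A(K)}$.

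With separation established, let $\delta : K \to A(K,\bR)^*$ be the evaluation map, which is affine and weak* continuous. Because $K$ is compact and the target is Hausdorff, $\delta$ is a homeomorphism onto $\delta(K)$. The image $\delta(K)$ is a weak* compact convex set lying in the hyperplane where the functional $1 \in A(K)$ equals $1$, and it is the dual base of $A(K)^*$.

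By Lemma \ref{a} (3), the linear extension $\tilde\delta : X_K \to A(K)^*$ is one-to-one. Its image is $\bR_+\delta(K) - \bR_+\delta(K)$, which is all of $A(K)^*$ because $A(K)^*$ is a dual base norm space with positive cone generated by its base. Lemma \ref{ab} (3) makes $\tilde\delta$ a base morphism, and Lemma \ref{bne} makes it an isometric isomorphism of base norm spaces.

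For the topology, define $\tau$ on $X_K$ to be the pullback of the weak* topology under $\tilde\delta$. This is a Hausdorff LCTVS topology, and it extends the topology on $K$ because $\delta$ is a homeomorphism onto its image. It also makes $\tilde\delta$ a homeomorphism, so $X_K$ is a dual base norm space.

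In the complex case, I would complexify. $(X_K)_c$ is the dual Taylor complexification of $X_K$, and $A(K,\bC)^* \cong (A(K,\bR)^*)_c$ by \cite[Section 3]{BH}. The complexification of $\tilde\delta$ is then an isometric selfadjoint base morphism and a homeomorphism for the product topology.
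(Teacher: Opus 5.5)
Your argument is correct once the separation step is done by your ``alternative'' route, and that route is essentially the paper's proof: embed $K$ via Theorem \ref{lmjot}, separate points by Hahn--Banach, use the evaluation map into $A(K,\bR)^*$ together with Lemmas \ref{a}, \ref{ab} and \ref{bne}, pull back the weak* topology, and complexify.

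You should drop the direct construction, however, because it fails as written. The sets ${\rm co}((U-k_0)\cup\epsilon(K-K))$ and their balanced hulls lie in the kernel of the base function, which vanishes on $K-K$. So they are not absorbing and cannot be neighborhoods of $0$ in any TVS topology. Your criterion for $\tau$ is also off: a quotient neighborhood of $0$ must pull back to a neighborhood of the whole diagonal $q^{-1}(0)=\{(c,c) : c\in\cC\}$, not just of $(0,0)$. The local convexity of Lawson's quotient topology is exactly the nontrivial input that the paper, like your alternative route, takes from the cited results.
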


\begin{proof}  
That   $K$ is affinely homeomorphic to a compact convex set in a  real Hausdorff LCTVS
was shown in \cite[Theorem 1]{JOT}.  Alternatively, in the last section of \cite{Law}, Lawson directly constructs an appropriate LCTVS topology on the vector space $X_K = \bR_+ K - \bR_+ K$.  Indeed he shows that
the quotient topology on $X_K$ induced by the difference map $\bR_+ K  \times \bR_+ K  \to X_K$ is an LCTVS topology extending the topology on $K$.
It follows e.g.\ by the geometric Hahn-Banach theorem in this LCTVS  that $A(K,\bR)$ separates points of $K$.
Hence $A(K,\bC)$ separates points of $K$.

Conversely, in both the real and complex case, once we know that $A(K)$ separates points of $K$ then the canonical map 
$\epsilon : K \to S(A(K)) \subset A(K)^*$ is one-to-one continuous and affine.
So it is  an affine homeomorphism onto a compact convex set in a Hausdorff LCTVS.   Indeed from 
convexity theory, as in (the proof of) `Kadison's theorem', $\epsilon(K) =  S(A(K))$.    Since $A(K)^*$ is a base norm space with base $K$,
Lemma \ref{bne} gives $X_K \cong A(K)^*$ via an isometric 
 base  morphism.   Transferring the weak* topology to $X_K$ gives a topology there with the desired properties. 
(The complex case  can also be seen from the real case and the fact that any real  LCTVS $X$ is real linearly homeomorphically embedded in a
 complex  LCTVS, namely $X \oplus iX$ with the product topology.)  Also, $A(K,\bC)^* = (A(K,\bR)_c)^*
 = (A(K,\bR)^*)_c$, the dual Taylor complexification \cite{BH}.  
 \end{proof} 

{\bf Remark.} See the last assertion of \cite[Corollary 4.2]{Law} for the variant of this where $K$ is locally compact. 

 \begin{theorem} \label{llm} {\rm (\cite[Theorem 5.3]{Law}  and \cite[Theorem 3.2]{LM})}\   If $\cC$ is a locally compact (resp.\  locally compact  and locally convex) (topological) cone then $\cC$ is  embeddable via an
affine homeomorphism as a locally compact cone  in a real Hausdorff  TVS  (resp.\  LCTVS).  Indeed if further  $\cC$ is a cone in a real vector space $X$ with $X = \cC - \cC$, and if $q : \cC \times \cC \to X$ is `subtraction', then 
 $X$ with the quotient topology $\tau$ induced by $q$ is a Hausdorff TVS  (resp.\  LCTVS) for which the canonical inclusion $\cC \subset (X,\tau)$ is a topological imbedding.    
  \end{theorem}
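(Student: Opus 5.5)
We need a proof sketch of Theorem \ref{llm} (Lawson / Lawson–Madison): a locally compact (resp.\ and locally convex) cone embeds in a Hausdorff TVS (resp.\ LCTVS), and the quotient topology $\tau$ on $X=\cC-\cC$ induced by subtraction $q$ makes $X$ such a space with $\cC\subset(X,\tau)$ a topological embedding.

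The plan is to work directly with $X=\cC-\cC$, taken to be the Grothendieck vector space of the abstract cone as in Subsection \ref{wiacu} (cancellation makes $\cC$ embed). Give $X$ the quotient topology $\tau$ from $q:\cC\times\cC\to X$, $q(x,y)=x-y$.

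\textbf{Step 1: $q$ is a proper map.} Since $\cC$ is locally compact, choose a compact neighborhood $U$ of $0$ in $\cC$. The key first step is to show that $q$ is a closed map with compact fibres. For the fibres: if $x-y=z$ is fixed and $(x_i,y_i)$ lies in the fibre, write $x_i=z'+w_i$, $y_i = z''+w_i$ with $w_i$ ranging in a set that is shown to be bounded. The bound comes from a linear-boundedness argument like that in the proof of Lemma \ref{p}: continuity of cone addition together with compactness of a base-like neighborhood prevents $w_i$ from escaping to infinity. Local compactness then gives a convergent subnet.

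\textbf{Step 2: the vector operations are $\tau$-continuous.} Once $q$ is proper, quotients of products behave well: $q\times q$ is again a quotient map, since products of proper surjections are proper, hence quotient. Addition on $X$ lifts to coordinatewise addition on $\cC^2\times\cC^2$, which is continuous, so addition is $\tau$-continuous. For scalar multiplication, positive scalars act coordinatewise on $\cC\times\cC$ and $-1$ swaps the coordinates, so these are continuous; this suffices for joint continuity, again using that $q\times{\rm id}_{\bR}$ is a quotient map.

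\textbf{Step 3: Hausdorffness and the embedding.} The fibre relation $\{((x,y),(x',y')) : x+y'=x'+y\}$ is closed in $\cC^4$ because addition is continuous and $\cC$ is Hausdorff. For a proper quotient this makes $X$ Hausdorff. The restriction of $q$ to $\cC\times\{0\}$ is injective and continuous. It is closed because its closed subsets are $q$-saturated up to compact fibre pieces and $q$ is closed. Hence it is a homeomorphism onto its image, and $\cC\subset(X,\tau)$ is an embedding, closed in $X$.

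\textbf{Step 4: local convexity.} Given a convex neighborhood $V$ of $0$ in $\cC$, the set $q(V\times V)=V-V$ is convex. I expect $q$ to be open at $(0,0)$ in the sense that such sets form a neighborhood base of $0$ for $\tau$. The argument would combine the properness of $q$ with compactness of $q^{-1}(0)\cap(U\times U)$, and then translate this base to other points using Step 2.

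The main obstacle is Step 1: proving that $q$ is closed, equivalently that differences from a compact set stay in a compact set. This is where the genuine content of Lawson's theorem lies. As a first attempt I would use the compact "cap" $\cC_0$ of the Lawson cone with base $K$, when one exists, and the linear-boundedness argument of Lemma \ref{p}. In general I would use a compact neighborhood of $0$ and argue that $\cC=\bigcup_n nU$, as in the $\sigma$-compact decomposition of the Lawson cone.
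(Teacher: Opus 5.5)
\textbf{Step 1 is false, and Steps 2--4 all rely on it.} (For comparison: the paper does not prove this theorem at all. It quotes it from \cite[Theorem 5.3]{Law} and \cite[Theorem 3.2]{LM}.)

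\emph{The fibres of $q$ are never compact.} Each $q^{-1}(z)$ is invariant under $(x,y)\mapsto(x+w,y+w)$ for every $w\in\cC$. For instance $q^{-1}(0)$ is the whole diagonal, a closed copy of $\cC$. Your $w_i$ range over all of $\cC$, and nothing bounds them.

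\emph{Nor is $q$ closed.} Take $\cC=[0,\infty)$, where $\tau$ is the usual topology of $\bR$. The set $\{(n+\frac{1}{n},n):n\geq 2\}$ is closed in $\cC\times\cC$, but its image $\{\frac{1}{n}\}$ is not closed.

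\emph{Consequences for Steps 2 and 3.} You have no reason for $q\times q$ to be a quotient map, and that is exactly the nontrivial point behind continuity of addition. Your Hausdorff and embedding arguments also rested on properness. Scalar multiplication is fine regardless, by Whitehead's lemma, since $\bR$ is locally compact.

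\emph{How to repair Steps 1--3.} The ingredient you mention at the end is what works, but it must be used differently.
\begin{enumerate}
\item[(i)] We have $\cC=\bigcup_n nU$, and each $x$ lies in $\mathrm{int}(nU)$ for $n$ large. So $\cC\times\cC$ is a $k_\omega$-space with pieces $nU\times nU$.
\item[(ii)] The congruence $\{((x,y),(x',y')):x+y'=x'+y\}$ is closed. Hence each $q(nU\times nU)$ is compact Hausdorff, and $(X,\tau)$ is their inductive limit, so it is Hausdorff and $k_\omega$.
\item[(iii)] Products of quotient maps from $k_\omega$-spaces onto Hausdorff spaces are quotient maps. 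This gives continuity of addition.
\item[(iv)] For the embedding, the true version of your phrase ``differences from a compact set stay in a compact set'' is this: for compact $L$, the set $\{f\in\cC: f+g\in L \mbox{ for some } g\in\cC\}$ is relatively compact. For a cone with compact base this is immediate from Lemma \ref{o}. It shows that $\iota(F)\cap q(nU\times nU)$ is compact for every closed $F\subseteq\cC$, so $\iota$ is a closed embedding.
\end{enumerate}

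\textbf{Step 4 also fails.} For small convex neighbourhoods $V$ of $0$, the set $V-V$ need not be a $\tau$-neighbourhood of $0$, so $q$ is not open at $(0,0)$. Here is a counterexample.
\begin{itemize}
\item[] Let $K$ be the state space of the space $c$ of convergent sequences. Then $\cC=c^*_+$ sits in $\ell^1(\bN\cup\{\infty\})$ with the Lawson ($=$ relative weak*) topology. Let $V=[0,\epsilon)K$ with $\epsilon<1$.
\item[] We have $(\delta_n,\delta_\infty)\to(\delta_\infty,\delta_\infty)\in q^{-1}(V-V)$. But $\|\delta_n-\delta_\infty\|=2$, whereas $V-V$ lies in the ball of radius $2\epsilon$.
\item[] So $q^{-1}(V-V)$ is not a neighbourhood of $(\delta_\infty,\delta_\infty)$, and hence $V-V$ is not a $\tau$-neighbourhood of $0$.
\end{itemize}
Local control at $(0,0)$ does not propagate along the diagonal. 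Local convexity of $\tau$ needs a global input, and that is the deep part of Lawson's theorem. In the compact-base case you could instead invoke \cite[Theorem 1]{JOT}, which gives that $A(K)$ separates points and $X\cong A(K)^*$. Then $\tau$ is the final topology of the weak*-compact sets $[0,n]K-[0,n]K$, that is, the bounded weak* topology of $A(K)^*$. This topology is locally convex by the Banach--Dieudonn\'e theorem.
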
 

{\bf Remarks.} \  1)\ The proof of the last result also uses the idea in the proof of \cite[Corollary 4.3]{Law}.\smallskip

2)\ The complex case of the last theorem  follows from the real case, since a real  LCTVS $X$ is real linearly homeomorphically embedded in a
 complex  LCTVS, namely $X \oplus iX$ with the product topology.

\begin{cor} \label{tc}   {\rm (Lawson)}\ If $K$ is a topologically convex set which is compact (resp.\ compact and  locally convex)  then 
 $K$ is affinely homeomorphic to the compact base of a locally compact cone in a Hausdorff TVS  (resp.\ LCTVS). 
 \end{cor}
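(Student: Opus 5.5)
The plan is to combine the Lawson cone construction with Theorem \ref{llm}. Let $K$ be compact and topologically convex, and let $\cC$ be its Lawson cone, described after Theorem \ref{lmjot}.

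First I record the properties of $\cC$ established there and in \cite[Proposition 2.1]{Law}:
\begin{enumerate}
\item[(a)] $\cC$ is Hausdorff, locally compact and $\sigma$-compact;
\item[(b)] the cone operations (addition and multiplication by nonnegative scalars) are continuous;
\item[(c)] $K$, identified with $A = \{1\} \times K$, is a compact subset of $\cC$ and a base for $\cC$;
\item[(d)] the base function on $\cC$ is continuous, by Lemma \ref{o}.
\end{enumerate}
Also $\cC$ sits inside $X_K = \cC - \cC$ as its positive cone. Theorem \ref{llm} then says that $X_K$, with the quotient topology $\tau$ induced by subtraction $q : \cC \times \cC \to X_K$, is a Hausdorff TVS, and that $\cC \subset (X_K,\tau)$ is a topological embedding. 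Hence $\cC$ is a locally compact cone in a Hausdorff TVS. Its base $K$ is affinely embedded, and it is homeomorphically embedded because the inclusion $K \to \cC$ is a homeomorphism onto $A$. This settles the TVS case; it is essentially the last assertion of Lemma \ref{p}.

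In the locally convex case, I first check that the Lawson cone is locally convex, so that the `respectively' clause of Theorem \ref{llm} applies. At a point $x = ck \neq 0$, basic neighborhoods have the form $I \cdot B$, with $I = (a,b)$ an open interval in $(0,\infty)$ and $B$ a convex open neighborhood in $K$. I would show that $I\cdot B$ is convex in the cone sense. Given $s k_1 + t k_2$ with $s,t\in I$ and $k_1,k_2 \in B$, a convex combination $\lambda s k_1 + (1-\lambda) t k_2$ equals $r\, k'$. Here $r = \lambda s + (1-\lambda)t \in I$, and $k'$ is a convex combination of $k_1,k_2$, hence lies in $B$. At $0$, the sets $[0,\epsilon)\cdot K$ are convex by the same computation. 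Thus $\cC$ is locally compact and locally convex, and Theorem \ref{llm} makes $(X_K,\tau)$ a Hausdorff LCTVS in which $\cC$ embeds topologically, with $K$ as compact base. Alternatively, Lemma \ref{r} directly gives $X_K \cong A(K,\bR)^*$ with the weak* topology, and the weak* closed cone generated by the base $K$ is locally compact, by a Krein--Šmulian type argument or since $\cC \cap \{f \le n\}$ is compact.

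The main obstacle is verifying that the topology $\tau$, restricted to $\cC$, agrees with the Lawson cone topology, and in particular that $K$ is closed, compact and a base in $(X_K,\tau)$. This is exactly the content of the embedding statement in Theorem \ref{llm}, which I would invoke rather than reprove. If one is instead working from Lemma \ref{r}, the task is to check that the base function is $\tau$-continuous, so that $\cC = \bR_+ K$ is closed with $K = \cC \cap f^{-1}(1)$. This follows from Lemma \ref{o} and the continuity in Lemma \ref{ab} (4).
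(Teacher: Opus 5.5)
Your proposal is correct and follows the paper's own proof. The paper simply says the result follows from Theorem \ref{llm} applied to the Lawson cone, or alternatively by viewing the Lawson cone of $K$ inside $X_K$ via Lemmas \ref{p} and \ref{r}; these are exactly your main and alternative arguments, and your explicit check that the Lawson cone is locally convex fills in a detail the paper leaves implicit.
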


\begin{proof}  This may be seen via Theorem \ref{llm}, but also follows for example from results in Section \ref{wiaccu} by viewing the Lawson cone of $K$ in $X_K$. \end{proof}

{\bf Remark.} We have geometric Hahn-Banach and separation theorems for a locally compact locally convex (topological) cone $\cC$.  For example, 
if  $x \neq y$ in $\cC$ then there is a continuous linear functional $f$ on $X$ separating them,
where  $X$ is an LCTVS in which $\cC$ is embedded by Theorem \ref{llm}.
  Then $f$ is affine on $\cC$, and separates  $x$ and $y$.
  
   \begin{cor} \label{q}  If a 
topological cone $\cC$ has a compact   base $K$ such that 
$\cC \setminus \{ 0 \} \cong (0,\infty) \times K$ homeomorphically,
then $\cC \cong X_+$ via an affine zero-preserving homeomorphism, for a base norm  space $X$ with base $K$. 
 \end{cor}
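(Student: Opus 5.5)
The plan is to reduce to Lemma \ref{p} by showing that $\cC$ is, up to an affine zero-preserving homeomorphism, the Lawson cone of $K$. Here $K$ is compact, and the cone operations on $\cC$ restrict to a continuous convexity operation on $K$. Explicitly, $xty$ corresponds to $tx+(1-t)y$, and $K$ is a base of the cone $\cC$, so this sum lies in $K$. Hence $K$ is a compact topologically convex set, and Lemma \ref{p} provides the base norm space $X_K$ with norm closed base $K$ and positive cone the Lawson cone $\cC_L$ of $K$. It therefore suffices to show that the canonical bijection $\Phi:\cC_L\to\cC$, given by $tk\mapsto tk$ and $0\mapsto 0$, is a homeomorphism. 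This map is affine and zero preserving because $K$ is a base of both cones. Then $\cC\cong (X_K)_+$, as required.

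Away from $0$ the claim is immediate. By hypothesis $\cC\setminus\{0\}\cong(0,\infty)\times K$ with the product topology. By construction $\cC_L\setminus\{0\}$ is homeomorphic to $(0,\infty)\times K$ with the product topology. One should check that the hypothesized homeomorphism is the natural one $(t,k)\mapsto tk$; if the hypothesis is only abstract, I would read it in that natural sense. Also $\cC_L\setminus\{0\}$ is open in $\cC_L$, and $\cC\setminus\{0\}$ is open in $\cC$ provided points are closed (I take the topology on $\cC$ to be Hausdorff). So $\Phi$ is a homeomorphism on the complement of $0$, and what remains is continuity of $\Phi$ and $\Phi^{-1}$ at $0$.

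The main obstacle is the point $0$. In $\cC_L$ a local base at $0$ consists of the sets $[0,\epsilon)\cdot K$. For continuity of $\Phi$ at $0$: if $t_\alpha\to 0$ with $k_\alpha\in K$, then $t_\alpha k_\alpha\to 0$ in $\cC$. To see this, argue via subnets: $K$ is compact, so some subnet has $k_\alpha\to k$, and then $t_\alpha k_\alpha\to 0\cdot k=0$ by joint continuity of scalar multiplication on the topological cone. For continuity of $\Phi^{-1}$ at $0$: I must show that every neighborhood $U$ of $0$ in $\cC$ contains some $[0,\epsilon)K$. Suppose not. Then there are $t_n\ge\epsilon_n\to$ (not to $0$) and $k_n$ with $t_nk_n\notin U$, and I need to get a contradiction. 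The compact set $[\epsilon,1]\cdot K$ does not meet $0$, so I would use the compactness of $K$ together with continuity of the map $[0,\infty)\times K\to\cC$, $(t,k)\mapsto tk$. This map sends $\{0\}\times K$ to $0$. By the tube lemma, the open set $m^{-1}(U)$ contains $[0,\epsilon)\times K$ for some $\epsilon>0$, which gives exactly $[0,\epsilon)K\subset U$.

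So in fact both directions at $0$ follow from continuity of scalar multiplication and compactness of $K$; the tube lemma handles the hard direction. This completes the identification $\Phi$. Transporting via $\Phi$, $\cC$ is affinely and zero-preservingly homeomorphic to the positive cone of the base norm space $X=X_K$ with base $K$. Finally, for the norm closed cone and base, cite Lemma \ref{p}.
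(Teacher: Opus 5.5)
There is a genuine gap at the point $0$, caused by a reversed inclusion. Your tube-lemma paragraph proves that every neighborhood $U$ of $0$ in $\cC$ contains some $[0,\epsilon)K$, i.e.\ $\Phi^{-1}(U)\supseteq[0,\epsilon)K$. That is continuity of $\Phi$ at $0$, the same fact your preceding net argument already gave. It is not continuity of $\Phi^{-1}$.

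Continuity of $\Phi^{-1}$ at $0$ needs the opposite inclusion: every $[0,\epsilon)K$ must contain a neighborhood of $0$ in $\cC$. Equivalently, $0\notin\overline{[\epsilon,\infty)K}$, or in net form, $t_\alpha k_\alpha\to 0$ in $\cC$ must force $t_\alpha\to 0$. The hypothesis on $\cC\setminus\{0\}$ says nothing about neighborhoods of $0$, and the tube lemma does not address this direction. So as written you have not shown that $\cC$ carries the Lawson topology at $0$, and that is the real content of the corollary.

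The missing step is easy to supply.
\begin{itemize}
\item Suppose $t_\alpha k_\alpha\to 0$ in $\cC$ but $t_\alpha\ge\epsilon>0$ along a subnet.
\item Put $s_\alpha=\epsilon/t_\alpha\in(0,1]$, and pass to a further subnet with $s_\alpha\to s\in[0,1]$.
\item Joint continuity of scalar multiplication gives $\epsilon k_\alpha=s_\alpha(t_\alpha k_\alpha)\to s\cdot 0=0$.
\item But $\epsilon K$ is compact, hence closed in the Hausdorff space $\cC$, and does not contain $0$. This is a contradiction.
\end{itemize}

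The paper handles $0$ instead by uniqueness of the one-point compactification. The set $[0,1]K$ is compact Hausdorff, being the continuous image of $[0,1]\times K$. It contains $(0,1]K\cong(0,1]\times K$ as the complement of the point $0$, so its topology at $0$ is forced to be that of $\cC_0$ in the Lawson cone.

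Equivalently, your continuous $\Phi$ restricts to a continuous bijection from the compact set $\cC_0$ onto the Hausdorff set $[0,1]K$, hence is a homeomorphism there. Even so, the rescaling above is still needed to rule out nets from $(1,\infty)K$ converging to $0$, since the compactification argument on $[0,1]K$ does not see them.

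With this step added, the rest of your outline matches the paper's proof: topological convexity of $K$ from continuity of the cone operations, identification of $\cC$ with the Lawson cone, and then Lemma \ref{p}.
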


\begin{proof}   As in the discussion in the second paragraph of Section \ref{wiacc}, by the uniqueness of the one-point compactification the topology on $\cC$ is
uniquely determined.  Indeed $\cC$ is exactly the Lawson cone of $K$, and is locally compact. The proof of Lemma \ref{p} gives  a base norm space $X$ with $X = \cC - \cC$ and base $K$.  
 The affine zero-preserving map is the one determined by the base. \end{proof} 

{\bf Remarks.} 1)\ In Corollary \ref{q}  we see in fact that $\cC$  is topologically embedded in $X$  as a closed subset.  
See also Section 3.12 in \cite{Jam}. 

\smallskip

2)\ If a  topological cone $\cC$ is locally  compact  and locally convex then it is automatic that it has a compact base 
with $\cC \setminus \{ 0 \} \cong (0,\infty) \times K$ homeomorphically (and in this case $X$ in the last result is a dual base norm space).  To see this note that 
by Corollary \ref{tc}  we can assume that $\cC$ is a locally  compact   cone in an LCTVS $E$.   However it is well known that in this case
$\cC$ is closed in $E$ and has a compact base \cite[Section 3.12]{Jam}.   Indeed the proof of this shows that the base function $\varphi$  is continuous.
Since $K$ is locally convex $X_K$ is a dual base norm space, and we have a continuous one-to-one map  from  $X_K \to E$ taking $\bR_+ K$ onto $\cC$,  which is a homeomorphism on $K$.
 Suppose that $c_t k_t \to c k$ in $\cC$, for $c_t \geq 0, k_t \in K$.  Applying $\varphi$ we see that $c_t \to c$.  If $c \neq 0$ then $k_t \to k$. So $c_t k_t \to ck$ in $X$. 
So we have a homeomorphism between $\cC  \setminus \{ 0 \}$ and $(0,\infty) \times K$ with product topology.  As in the proof of Corollary \ref{q} we have that $\cC$ is exactly the Lawson cone of $K$, and
$X$ is a dual base norm space by Lemma \ref{r}.

 \section{What is a  matrix convex set? A  matrix compact convex set?} \label{wiamcsccs}  

\subsection{What is a  matrix convex set?} \label{wiamcs}  In Subsection \ref{wiac} we discussed the Stone-Hausner abstract characterization of convex sets,
and in Subsection \ref{wiacc}  we discussed the  abstract characterization of compact convex sets.  Now that we understand these, one may ask for an abstract characterization of matrix (or nc) convex sets $K = (K_n)$, and of 
matrix (or nc)  compact convex sets.    One 
may write down a characterization similar to  Stone-Hausner's mixture conditions, that is in terms of multilinear 
maps in variables from $K_n$ and matrix algebras, satisfying certain `mixture conditions' resembling the four  Stone-Hausner axioms.  
However this can be quite clumsy looking if not done carefully.
Moreover in view of the  Stone-Hausner theorem we have no need of abstract convex sets.   We supply a few alternative characterizations.   For example, one approach is to take a perspective 
from the theory of noncommutative functions (see e.g.\ \cite{KVV}), namely that nc sets and nc functions are defined in terms of a particular fixed vector space.  
We will define a (complex) {\em abstract  matrix convex set} $K = (K_n)$ in
 terms of a canonical vector space $Z$ which is determined by the 
abstract convex set $K_1$.   More particularly,  $Z = Z_{K_1}$ is the complexification of the real universal vector space $X_{K_1}$ from  Subsection
\ref{wiacu}.  We say that $K$ is a (complex) abstract  matrix convex set if  it
is a  complex matrix convex set in the usual sense (see \cite{WW}, or our introduction) in the vector space $Z  = Z_{K_1}$.

If one is in a situation where further abstraction is convenient, 
we supply a suitable 
framework in terms of a kind of `quadratic form'.  
Let  $K$ be a sequence of sets $(K_n)$, such that $K_1$ is an abstract convex set.
Suppose that we have  actions $\bC^n \times K_n \to K_1$,  written as $\xi^* v \xi$, for $n \in \bN, v \in K_n, 
\xi \in \bC^n, \| \xi \| = 1$.  We call these {\em  compressions to} $K_1$, or $1$-{\em compressions}, 
and sometimes write it as $v_\xi$.
This action is not defined if $\| \xi \| \neq 1$, but we may extend the notation to
 all $\xi \in \bC^n$ by defining $\xi^* v \xi = \| \xi \|^2 (\xi')^* v \xi'$,  where we write $\eta'$ for the unit vector normalization of
a nonzero vector $\eta \in \bC^n$. Of course $0^* v 0 = 0$. 
We may then talk about the (matrix) {\em entries} of an element
$k \in K_n$.  These are a canonical linear combination in $Z$ coming from the polarization identity
\begin{equation}
    \label{pn} \begin{split} 
k_{ij}  &=  \frac{1}{4} \sum_{k=0}^3 \, \iota^k (\eta + \iota^k \xi)^* k (\eta + \iota^k \xi), \qquad  \eta = e_j, \xi = e_i , \\
k_{ii} &= e_i^* k e_i.  \end{split}
\end{equation} 
(Here $(e_i)$ is the canonical basis of $\bC^n$.) 
Then $[k_{ij}] \in M_n(Z)$.   Let $\tilde{K_n}$ be the set of such matrices in $M_n(Z)$, for $k \in K_n, n \in \bN$. 
We  require that

\medskip

\begin{adjustwidth}{2em}{2em} 
(M1) $ \; \; \xi^* [k_{ij}] \xi = \xi^* k \xi = k_\xi , \qquad n \in \bN, k \in K_n, \xi \in \bC^n,  \| \xi \| = 1. $
 \end{adjustwidth} 
 
\medskip

 \noindent   That is, the compressions to $K_1$ act as they ought.  If one is concerned that this characterization 
 is not phrased intrinsically in terms of $K$, we have a couple of reformulations of (M1) which are
 intrinsic to $K$.  For example:

\begin{lemma} \label{M1M1'}   Equation {\rm (M1)} is equivalent to 
\begin{adjustwidth}{2em}{2em} 
{\rm (M1)$^\prime$} \,   The 1-compressions satisfy the parallelogram identity:
$$ 2(\xi^* k \xi + \eta^* k \eta) = (\xi + \eta)^* k  (\xi + \eta) + (\xi - \eta)^* k  (\xi - \eta), \qquad \xi, \eta \in \bC^n, k \in K_n, n \in \bN.$$ 
 \end{adjustwidth} 
 \end{lemma}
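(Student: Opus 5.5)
The plan is to prove the two implications separately, working throughout with the extended notation $\xi^* k \xi = \|\xi\|^2 (\xi')^* k \xi'$ (and $0^* k 0 = 0$), viewed as a map $q_k : \bC^n \to Z$ for fixed $k \in K_n$. By construction $q_k$ satisfies $q_k(\lambda \xi) = |\lambda|^2 q_k(\xi)$ for all $\lambda \in \bC$. Note also that the matrix $[k_{ij}]$ is built from $q_k$ by the polarization formula (\ref{pn}).

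For (M1) $\Rightarrow$ (M1)$^\prime$, assume (M1). Then $q_k(\xi) = \xi^* [k_{ij}] \xi = \sum_{i,j} \bar{\xi_i} \xi_j k_{ij}$ on unit vectors. By homogeneity this holds for all $\xi \in \bC^n$, since both sides scale by $|\lambda|^2$. The right side is a sesquilinear expression in $\xi$, namely $\sigma(\xi,\xi)$ for $\sigma(\xi,\eta) = \sum \bar{\xi_i}\eta_j k_{ij}$, with values in the complex vector space $Z$. The parallelogram identity then follows from expanding $\sigma(\xi\pm\eta,\xi\pm\eta)$ and adding, exactly as for scalar quadratic forms.

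For (M1)$^\prime$ $\Rightarrow$ (M1), I would invoke the vector-valued Jordan--von Neumann theorem for the map $q = q_k : \bC^n \to Z$. This map satisfies the parallelogram law and $q(\lambda\xi) = |\lambda|^2 q(\xi)$. The claim is that the polarization
$$\sigma(\xi,\eta) = \frac14 \sum_{m=0}^3 \iota^m q(\eta + \iota^m \xi)$$
is sesquilinear (conjugate linear in $\xi$, linear in $\eta$) with $\sigma(\xi,\xi) = q(\xi)$. The steps are as follows.
\begin{enumerate}
\item Use the parallelogram law to show $\sigma(\xi,\eta_1+\eta_2) + \sigma(\xi,\eta_1-\eta_2) = 2\sigma(\xi,\eta_1)$. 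Together with $\sigma(\xi,0)=0$, this gives additivity in the second variable.
\item Additivity gives $\bQ$-homogeneity. I would then get the step from $\bQ$ to real scalars algebraically rather than by continuity; this is the main obstacle, discussed below.
\item Obtain $\sigma(\xi,\iota\eta) = \iota\sigma(\xi,\eta)$ directly from the formula, by reindexing the sum and using $q(\iota\zeta) = q(\zeta)$.
\item Obtain the Hermitian symmetry $\sigma(\eta,\xi) = \overline{\sigma(\xi,\eta)}$, where the conjugation is the involution on $Z$. This holds provided $q$ takes values in the selfadjoint part $X_{K_1}$, which it does since $K_1 \subset X_{K_1}$. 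Symmetry then transfers linearity in $\eta$ into conjugate linearity in $\xi$.
\end{enumerate}
Granting these, $\sigma(e_i,e_j) = k_{ij}$ by (\ref{pn}), and so $q(\xi) = \sigma(\xi,\xi) = \sum \bar{\xi_i}\xi_j k_{ij} = \xi^*[k_{ij}]\xi$, which is (M1).

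The main obstacle is step 2, passing from $\bQ$-homogeneity to $\bR$-homogeneity, since there is no topology on $Z$ here. I would first try to avoid it by working in coordinates. Repeated use of the parallelogram law with the homogeneity $q(\lambda\xi)=|\lambda|^2 q(\xi)$ should reduce $q(\sum \xi_j e_j)$ to a finite expression in the values $q(e_i)$ and $q(e_i + \iota^m e_j)$, with coefficients polynomial in the $\xi_j$ and $\bar{\xi_j}$. For instance, in two variables, apply the law to $\xi_1 e_1$ and $\xi_2 e_2$, and then write $q(a e_1 + b e_2)$ for general complex $a, b$ using homogeneity on the ratio. That ratio step is exactly where the real-scalar issue reappears. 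If it resists, the fallback is to note that for fixed $\xi, \eta$ the map $t \mapsto \sigma(\xi, t\eta)$ is additive, and that its image under each linear functional on $Z$ is a real-quadratic-type expression in $t$ built from $|\cdot|^2$. This gives local boundedness on each finite-dimensional span, hence $\bR$-linearity by the classical Cauchy equation argument. I expect this reduction to finite-dimensional, and hence automatically normed, subspaces of $Z$ to be the delicate point.
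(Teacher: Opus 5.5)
Your forward direction is the paper's, and Steps 1, 3 and 4 are correct. Step 2, however, is a genuine gap, and it carries essentially all of the substance of the converse. Additivity and Step 3 only give $\sigma(\xi,\lambda\eta)=\lambda\sigma(\xi,\eta)$ for $\lambda\in\mathbb{Q}(\iota)$. The final expansion $\sigma(\xi,\xi)=\sum_{i,j}\bar{\xi_i}\xi_j k_{ij}$ for arbitrary complex coordinates needs real homogeneity, and neither of your routes supplies it.

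The coordinate route, as you observe, just reduces to computing $q(e_1+ce_2)$ for complex $c$, which is the same problem. The fallback is circular. Homogeneity controls $q$ only on rays through $0$, while $t\mapsto t\eta+\iota^m\xi$ is an affine line missing $0$. So the claim that $\varphi(\sigma(\xi,t\eta))$ is a ``quadratic-type expression in $t$'' is equivalent to what you want to prove. The same goes for claiming these values are locally bounded or lie in a finite-dimensional subspace of $Z$, and $Z$ has no norm or topology to fall back on.

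In fact the phase invariance $q(e^{\iota\theta}\xi)=q(\xi)$ must be used in Step 2 itself, because real homogeneity alone is not enough. Let $D$ be a nonzero derivation of the field $\mathbb{R}$, i.e.\ additive with $D(st)=sD(t)+tD(s)$. Then $Q(s,t)=sD(t)-tD(s)$ on $\mathbb{R}^2$ satisfies the parallelogram law and $Q(rx)=r^2Q(x)$ for real $r$. Yet $Q(1,t)=D(t)$ is not locally bounded, so $Q$ is not a quadratic form.

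The paper avoids building the $Z$-valued form directly. It composes with a real linear functional $\varphi$ on $X_{K_1}$, so that $\xi\mapsto\varphi(\xi^*k\xi)$ is a real function on $\mathbb{C}^n$ satisfying the parallelogram law; the $|\lambda|^2$-homogeneity built into the extended notation is also available. It then quotes the scalar result \cite[Theorem 11.5]{A} for a selfadjoint sesquilinear $B$ with $B(\xi,\xi)=\varphi(\xi^*k\xi)$. It identifies $B(e_i,e_j)=\varphi_c(k_{ij})$ by polarization, and concludes because linear functionals separate points of $X_{K_1}$. In other words, it outsources exactly the step you flagged.

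If you want to keep your direct route, Step 2 can be closed algebraically:
\begin{enumerate}
\item Fix $\xi,\eta$ and let $\beta(s,t)=\sigma(s\xi,t\eta)$ for $s,t\in\mathbb{R}$.
\item Expand $q(\lambda\xi+\lambda\eta)=|\lambda|^2q(\xi+\eta)$ by biadditivity, and repeat with $\iota\eta$ in place of $\eta$. This gives $\sigma(\lambda\xi,\lambda\eta)=|\lambda|^2\sigma(\xi,\eta)$ for all $\lambda\in\mathbb{C}$.
\item Real $\lambda$ gives $\beta(rs,rt)=r^2\beta(s,t)$. Taking $\lambda=r+\iota s$, together with Steps 3 and 4, gives $\beta(r,s)=\beta(s,r)$.
\item Hence $g(t)=\beta(1,t)$ is additive with $g(t)=t^2g(1/t)$ for $t\neq0$.
\item Apply $g$ to $\frac1t-\frac1{t+1}=\frac1{t(t+1)}$ to get $g(t^2)=2tg(t)-t^2g(1)$. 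So $\phi(t)=g(t)-tg(1)$ satisfies $\phi(t^2)=2t\phi(t)$, and by polarization $\phi$ is a derivation.
\item But also $\phi(t)=t^2\phi(1/t)$, while a derivation satisfies $\phi(1/t)=-\phi(t)/t^2$. Hence $\phi=0$, that is, $\sigma(\xi,t\eta)=t\sigma(\xi,\eta)$.
\end{enumerate}
This Jurkat--Kurepa argument works verbatim for $Z$-valued $g$, since it only uses additivity and multiplication by real scalars.
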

 
(This may be made into an equation inside $K_1$ by scaling so that $\| \xi \|^2 + \| \eta \|^2 = 1$. Then the left side divided by $2$ is a 
convex combination of two elements of $K_1$.   By the usual parallelogram identity 
$\| \xi + \eta \|^2 + \| \xi - \eta \|^2 = 2$, so that the right side divided by $2$ is also a  
convex combination of two elements of $K_1$.)

\begin{proof}  If (M1) holds then by (M1), and by multiplying out the parentheses, $$\xi^* k \xi + \eta^* k \eta = \xi^* [k_{ij}] \xi + \eta^* [k_{ij}] \eta = \frac{1}{2}((\xi + \eta)^* [k_{ij}] (\xi + \eta) + (\xi - \eta)^* [k_{ij}] (\xi - \eta)).$$
This equals  $\frac{1}{2}((\xi + \eta)^* k  (\xi + \eta) + (\xi - \eta)^* k  (\xi - \eta))$ by (M1) again.  
Thus (M1)$^\prime$ holds.

For the converse, let $\varphi$ be a real linear functional on $X_{K_1}$.  Applying $\varphi$ to  (M1)$^\prime$ we see that 
$\xi \mapsto \varphi(\xi^* k \xi)$ satisfies the parallelogram identity on $\bC^n$. 
Thus by \cite[Theorem 11.5]{A} there exists a sesquilinear  map  $B : \bC^n \times \bC^n \to \bC$ with
$B(\xi, \xi) =  \varphi(\xi^* k \xi)$.   By the polarization identity it is easy to see that $B(e_i, e_j)$ is just $\varphi_c$ applied 
to the expression in (\ref{pn}), and $B$ is a selfadjoint sesquilinear form.  Indeed with the notation there 
$$\varphi_c(k_{ij}) =  \frac{1}{4} \sum_{k=0}^3 \, \iota^k B(\eta + \iota^k \xi, \eta + \iota^k \xi)
= B(e_i, e_j).$$
Thus if $\xi = [z_i]$ then $$\varphi(\xi^* [k_{ij}] \xi) = \sum_{i,j} \overline{z_i} \varphi_c(k_{ij} ) z_j =
\sum_{i,j} \overline{z_i} B(e_i, e_j) z_j = B(\xi, \xi) =  \varphi(\xi^* k \xi).$$
Since the linear dual of  $X_{K_1}$ separates points of $X_{K_1}$ we have that $\xi^* [k_{ij}] \xi = \xi^* k \xi$. 
So (M1) holds. \end{proof}  

We will later be able to replace (M1) or  (M1)$^\prime$ with the very mild, even at-first-glance redundant, condition

\medskip

\begin{adjustwidth}{2em}{2em} 
(M1)$^{\prime \prime}$\ For $n \in \bN$ and $k \in K_n$, the element $k \oplus k \in K_{2n}$ commutes with 
$$U = \frac{1}{\sqrt{2}} \, \begin{bmatrix}  I_n &  I_n \\   I_n & - I_n  \end{bmatrix} .$$
That is, $U^* (k \oplus k) U = k \oplus k .$
 \end{adjustwidth} 
 
\medskip

We also require that: 
(M2) \ the entries of $k \in K_n$ determine $k$.  That is, 
$k \to [k_{ij}]$ is one-to-one on each $K_n$ for all $n \in \bN$.  We will check below 
that assuming (M1),  (M2) is equivalent to 

\medskip

\begin{adjustwidth}{2em}{2em} 
(M2)$^\prime$ \, The compressions to $K_1$ determine
 $k \in K_n$.   \end{adjustwidth} 
 
\medskip

 \noindent  That is, if $\xi^* k \xi = \xi^* k' \xi$, for $k, k' \in K_n, 
\xi \in \bC^n, \| \xi \| = 1$, then $k = k'$.  

We say that $K$ is a (complex) {\em abstract  matrix convex set} if (M1) and (M2) hold,
as well as (M3)\ $\tilde{K} = (\tilde{K_n})$ is a  complex matrix convex set in the usual sense (see \cite{WW}), in the vector space $Z$. 
Alternatively to (M3), one may write down the `matrix convexity operations'  for $K$ in terms of the compressions to $K_1$, and insist that $K$ is closed under these.  Namely, we have  {\em general compressions} 
$$(\gamma^* k \gamma)_\xi = (\gamma  \xi)^* \, k  \, (\gamma \xi)
= k_{\gamma \xi}, \qquad \gamma \in M_{m,n}, \gamma^* \gamma = I_n, k \in K_m, \xi \in \bC^n,  \| \xi \| = 1,$$
and {\em direct sum}
$$(k_1 \oplus k_2)_\xi = \xi_1^* k_1 \xi_1 + \xi_2^* k_2
 \xi_2 , \qquad  k_1 \in K_n, k_2 \in K_m , \xi \in \bC^{n+m},  \| \xi \| = 1,$$
where the $\xi_k$ are  the projection of $\xi$ onto $\bC^{n} \oplus \vec 0$ and $\vec 0  \oplus \bC^{m}$ in $\bC^{n+m}$,
and $\xi_j^* k_j \xi_j =  \| \xi_j \|^2  \, (k_j)_{\xi_j'}$ for $j = 1,2$, where we write $\eta'$ for the unit vector normalization of
a vector $\eta \in \bC^n$.
Thus the formulation of the alternative to (M3) is: 

\medskip

\begin{adjustwidth}{2em}{2em} 
(M3)$^\prime$ \, For all $n, m$ and $\gamma$ as above, and $k, k_2 \in K_m, k_1 \in K_n$, there exists an element in $K_n$ written as $\gamma^* k \gamma$, 
and an element in $K_{n+m}$ written as $k_1 \oplus k_2$, such that the last two centered equations hold.  
\end{adjustwidth} 
 
\medskip

\begin{lemma} \label{M1M1''}   If both {\rm (M3)$^{\prime}$}  and {\rm  (M1)$^{\prime \prime}$} hold then so does  
{\rm (M1)$^{\prime}$}. Conversely,  if {\rm (M1)$^{\prime}$}, {\rm (M2)$^{\prime}$}, and {\rm (M3)$^{\prime}$} all hold then so does 
 {\rm (M1)$^{\prime \prime}$}. 
 \end{lemma}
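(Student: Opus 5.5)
For the first implication, assume (M3)$^{\prime}$ and (M1)$^{\prime\prime}$, and fix $k \in K_n$ and $\xi, \eta \in \bC^n$. By homogeneity of the extended notation $\xi^* v \xi = \| \xi \|^2 (\xi')^* v \xi'$, I will normalize so that $\| \xi \|^2 + \| \eta \|^2 = 1$. Then $\zeta = (\xi, \eta) \in \bC^{2n}$ is a unit vector. The idea is to compute the compression $(k \oplus k)_\zeta$ in two ways.

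\textbf{First computation.} By the direct sum formula in (M3)$^{\prime}$,
$$(k \oplus k)_\zeta = \xi^* k \xi + \eta^* k \eta .$$

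\textbf{Second computation.} Since $U$ is unitary, $U^* U = I_{2n}$, so the general compression formula in (M3)$^{\prime}$ with $\gamma = U$ (and $m = n$ replaced by $2n$) gives
$$(U^*(k\oplus k)U)_\zeta = (k \oplus k)_{U\zeta}.$$
By (M1)$^{\prime\prime}$ the left side is $(k\oplus k)_\zeta$. Now $U\zeta = \frac{1}{\sqrt 2}(\xi + \eta, \xi - \eta)$, so the direct sum formula again gives
$$(k \oplus k)_{U\zeta} = \tfrac12 \bigl( (\xi+\eta)^* k (\xi+\eta) + (\xi-\eta)^* k (\xi-\eta) \bigr).$$

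Equating the two computations and multiplying by $2$ yields exactly (M1)$^{\prime}$, after undoing the normalization by homogeneity.

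The one point needing care is the identification of $U^*(k\oplus k)U$. Condition (M1)$^{\prime\prime}$ asserts this element equals $k \oplus k$, where $U^*(k\oplus k)U$ is understood as the element provided by (M3)$^{\prime}$. Its $\zeta$-compression is therefore $(k\oplus k)_{U\zeta}$ by definition, so no further justification is required.

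For the converse, assume (M1)$^{\prime}$, (M2)$^{\prime}$ and (M3)$^{\prime}$. By (M2)$^{\prime}$ it suffices to show that $U^*(k\oplus k)U$ and $k\oplus k$ have the same compression at every unit vector $\zeta = (\xi,\eta)$. By the computation above these compressions are, respectively,
$$\tfrac12\bigl((\xi+\eta)^* k(\xi+\eta) + (\xi-\eta)^* k(\xi-\eta)\bigr) \quad \text{and} \quad \xi^* k\xi + \eta^* k \eta .$$
These are equal by (M1)$^{\prime}$. The only subtlety is the scaling convention for non-unit vectors, including the case where one of the vectors is zero. This is handled consistently by the extended definition, since all identities are homogeneous of degree $2$, and I expect no real obstacle here.
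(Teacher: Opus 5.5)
Your proposal is correct and is essentially the paper's own proof. The paper likewise computes $\zeta^*(k\oplus k)\zeta$ in two ways, once by the direct-sum formula and once by the compression by $U$ together with (M1)$^{\prime\prime}$, to get (M1)$^{\prime}$. For the converse it runs the same computation backwards and concludes with (M2)$^{\prime}$.
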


\begin{proof}  Assume that {\rm (M3)$^\prime$}  holds, and let $U$ be as in (M1)$^{\prime \prime}$.
For $k \in K_n$  let $k' = k \oplus k \in K_{2n}$.  If  $\| \xi \|^2 + \| \eta \|^2 = 1$ set $\zeta = (\xi, \eta)  \in \bC^{2n}$. By (M3)$^\prime$ used twice we have 
$$\xi^* k \xi + \eta^* k \eta = \zeta^* k' \zeta = \zeta^* U^* k' U \zeta = \frac{1}{2}((\xi + \eta)^* k  (\xi + \eta) + (\xi - \eta)^* k  (\xi - \eta)),$$ which gives (M1)$^\prime$.

The argument reverses to show that  (M1)$^\prime$ and {\rm (M3)$^\prime$}  imply that 
$\zeta^* k' \zeta = \zeta^* U^* k' U \zeta$ for all $\zeta \in \bC^{2n}$, so that {\rm (M1)$^{\prime \prime}$} holds by 
(M2)$^{\prime}$. \end{proof}  

\noindent An {\em abstract nc affine map}  is a nc function which preserves  the direct sums 
$\oplus$ and compressions defined in {\rm (M3)$^\prime$}.
An {\em abstract  complex matrix convex set} is a sequence $K = (K_n)$  of sets 
which satisfies  the 
conditions in the next theorem, or equivalently which satisfies  {\rm (M1)$^{\prime \prime}$}, {\rm (M2)$^{\prime}$}, and {\rm (M3)$^{\prime}$}.  Or equivalently, by the theorem, which is abstractly nc affinely isomorphic to a concrete 
(complex) matrix convex set in the sense of \cite{WW}. 
 We will adopt a similar but more complicated approach to real matrix convex sets elsewhere.

\begin{thm} \label{defmc}  Let $K$ be a sequence of sets $(K_n)$, such that $K_1$ is an abstract convex set and we have 
actions $\bC^n \times K_n \to K_1$ for all $n \in \bN$ 
satisfying {\rm (M1)} (or equivalently, {\rm (M1)$^\prime$}).  Then $K$ satisfies {\rm (M2)} and {\rm  (M3)} if and only if it satisfies {\rm (M2)$^\prime$} and {\rm (M3)$^\prime$}.
If these all hold then $\tilde{K}$ above is a concrete (complex) selfadjoint matrix convex set in the $*$-vector space
$Z$ and  $K$ is abstractly nc affinely isomorphic to $\tilde{K}$.

Conversely, every concrete matrix convex set  is an abstract matrix convex set. 
\end{thm}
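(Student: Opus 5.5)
We work throughout with the map $\Phi_n : K_n \to M_n(Z)$, $k \mapsto [k_{ij}]$, defined by the polarization formula (\ref{pn}). The central tool is (M1). It says $\xi^* \Phi_n(k) \xi = \xi^* k \xi$ for unit $\xi$, hence by scaling for all $\xi$. So the $1$-compressions of $k$ are exactly the scalar compressions of the matrix $\Phi_n(k)$.

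\textbf{Step 1: (M2) $\Leftrightarrow$ (M2)$'$, given (M1).} If $\Phi_n(k) = \Phi_n(k')$, then $\xi^* k \xi = \xi^* k' \xi$ for all $\xi$ by (M1). Conversely, equal compressions give equal entries, since the entries are built from compressions via (\ref{pn}). Hence "compressions determine $k$" is the same as "$\Phi_n$ is one-to-one".

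\textbf{Step 2: (M3) $\Leftrightarrow$ (M3)$'$, assuming (M1) and (M2).} First we record that each $\Phi_n(k)$ is selfadjoint in $M_n(Z)$ with respect to the involution on $Z = (X_{K_1})_c$. Indeed, the diagonal compressions lie in $K_1 \subset (X_{K_1})$, which is $Z_{\rm sa}$, and the polarization formula gives $k_{ji} = k_{ij}^*$.

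Suppose (M3), so $\tilde K$ is matrix convex. For an isometry $\gamma$ and $k \in K_m$, the matrix $\gamma^* \Phi_m(k) \gamma$ lies in $\tilde K_n$, so it equals $\Phi_n(k'')$ for some $k'' \in K_n$. Call this $\gamma^* k \gamma$; it is unique by (M2). Then, by (M1),
\[
(\gamma^*k\gamma)_\xi = \xi^*\gamma^*\Phi_m(k)\gamma\xi = k_{\gamma\xi}.
\]
The direct sum is handled the same way: $\Phi_n(k_1) \oplus \Phi_m(k_2) = \Phi_{n+m}(k_1 \oplus k_2)$, and its $\xi$-compression is $\xi_1^* k_1 \xi_1 + \xi_2^* k_2 \xi_2$. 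So (M3)$'$ holds.

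Conversely, suppose (M3)$'$. The element $\gamma^* k \gamma$ supplied there has $\xi$-compressions equal to those of the matrix $\gamma^* \Phi_m(k) \gamma$. Because the entries are determined by compressions (Step 1 and (M1)), we get $\Phi_n(\gamma^* k \gamma) = \gamma^* \Phi_m(k) \gamma$. Likewise $\Phi_{n+m}(k_1 \oplus k_2) = \Phi_n(k_1) \oplus \Phi_m(k_2)$. Hence $\tilde K$ is closed under direct sums and isometric compressions, which is (M3).

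The same identities show two further things:
\begin{itemize}
\item the map $\Phi = (\Phi_n) : K \to \tilde K$ intertwines the abstract operations with the concrete ones, so it is abstractly nc affine;
\item $\Phi$ is bijective, by (M2) together with surjectivity by definition of $\tilde K$.
\end{itemize}
Therefore $K$ is nc affinely isomorphic to the selfadjoint matrix convex set $\tilde K$.

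\textbf{Step 3: the converse.} Let $L$ be a concrete matrix convex set in a vector space $E$. Define $\xi^* v \xi$ as the usual matrix product; it lies in $L_1$ since $\xi$ is an isometry $\bC \to \bC^n$. Then (M1)$'$ is the ordinary parallelogram identity for matrix products, and (M3)$'$ is closure of $L$ under direct sums and compressions.

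For (M2)$'$, the difficulty is that $E$ need not be a $*$-space, or $L_1$ may not lie in a hyperplane, so identifying $Z_{L_1}$ with a subspace of $E$ is not immediate. Lemma \ref{a}(2) gives a complex linear map $\tilde f : Z \to E$ extending the inclusion of $L_1$. Applied entrywise, $\tilde f$ sends $\Phi_n(v)$ back to $v$. To check this, we compare polarization: the polarization sum in (\ref{pn}) computed in $E$ returns $v_{ij}$ because compressions in $E$ are genuine matrix products. Consequently $\Phi_n$ is one-to-one, which is (M2), and the equivalences above give (M2)$'$.

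We expect this last point to be the main obstacle. The key observation is that injectivity of $\tilde f$ is never needed: only that $\tilde f^{(n)} \circ \Phi_n = \mathrm{id}$, which forces $\Phi_n$ to be injective.
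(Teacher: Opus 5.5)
Your proposal is correct and follows essentially the same route as the paper. Both arguments rest on the identity $\xi^*\Phi_n(k)\xi=\xi^* k\xi$ from (M1) together with polarization, which gives (M2)$\Leftrightarrow$(M2)$^\prime$ and shows that the (M3)$^\prime$ operations correspond exactly to $\Phi_n(\gamma^* k\gamma)=\gamma^*\Phi_m(k)\gamma$ and $\Phi_{n+m}(k_1\oplus k_2)=\Phi_n(k_1)\oplus\Phi_m(k_2)$. In the converse, your detour through Lemma \ref{a}(2) is valid but unnecessary, and the ``obstacle'' you anticipate does not arise: the compressions of $v\in L_n\subseteq M_n(E)$ are elements of $L_1\subseteq E$, so polarization in $E$ itself gives (M2)$^\prime$ at once, and Step 1 then yields (M2); this is why the paper simply calls this direction clear.
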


\begin{proof}    Clearly (M2) implies (M2)$^\prime$ since $k_{ij}$ is a fixed linear combination of 
fixed compressions to $K_1$ by (\ref{pn}).  Conversely, if $[k_{ij}] = [k'_{ij}]$ and (M1) and (M2)$^\prime$ hold 
then $k = k'$ since these have the same compressions to $K_1$ by (M1).  
If $M_k = [k_{ij}]$ is the matrix for $k$, then by (M1) the 
first centered equation defining (M3)$^\prime$
may be rewritten as $$(M_{\gamma^* k \gamma})_\xi = (M_{k})_{\gamma \xi} = ( \gamma^* M_k \gamma)_\xi.$$
These hold for all such $\xi$ if and only if $M_{\gamma^* k \gamma} = \gamma^* M_k \gamma$.
Thus this part of (M3)$^\prime$ is equivalent to saying that $\tilde{K}$ is closed under compressions. 
Similarly, the second  centered equation as it is used in (M3)$^\prime$ is equivalent to saying that $\tilde{K}$ is closed under 
direct sum, via the equation: 
$$(M_{k_1 \oplus k_2})_\xi = \| \xi_1 \|^2 (M_{k_1})_{\xi_1'} + \| \xi_2 \|^2 (M_{k_2})_{\xi_2'} 
= (M_{k_1} \oplus M_{k_2})_\xi.$$
 Moreover the map $k \mapsto [k_{ij}]$ is  an abstract nc affine isomorphism $K \cong \tilde{K}$.

Note that $Z$ is a $*$-vector space with the obvious involution $(x +iy)^* = x-iy$ for $x, y \in X_{K_1}$.  Then $K$ is selfadjoint, that is, $\tilde{K}_n \subset M_n(Z)_{\rm sa}$. 
 Indeed $k_{ii}^* = k_{ii}$ since $k_{ii} \in K_1 \subset Z_{\rm sa} = X_{K_1}$.
That  $k_{ji}^* = k_{ij}$ may be easily seen  from (\ref{pn}). 

For the converse, clearly every concrete matrix convex set satisfies {\rm (M1)$^{\prime \prime}$}, {\rm (M2)$^{\prime}$}, and {\rm (M3)$^{\prime}$}.
\end{proof}

A {\em nc hyperplane} in $E$ is the sequence $(H_n)$ of sets $H_n = \{x \in M_n(E): f^{(n)}(x) = I_n\}$, where $f$ is a fixed linear functional on $E$, and $I_n$ is the $n \times n$ identity matrix.  
The hyperplane trick mentioned above Lemma \ref{a} works for a matrix convex set $K$ in $V$ to get a nc 
affinely isomorphic convex set which is in a nc hyperplane not passing through 0: consider $(K_n \times \{ I_n \})$ in  $V \oplus \bF$.   Indeed if $H_1$ lies in a hyperplane in $E$ not containing 0, then $H$ lies in a nc hyperplane not containing 0. 
For if $f$ is a linear functional which is $1$ on $H_1$ then $\xi^* f^{(n)}(k) \xi = f(\xi^* k \xi) = 1$ for all $k \in H_n$ and all unit vectors $\xi \in \bC^n$.  Thus $f^{(n)}(k) = I_n$.

\begin{prop} \label{isaf}
Suppose that $K$ is a  complex matrix convex set in a complex vector space $V$. Then 
 $K$ is matrix affinely embedded as a selfadjoint matrix convex set in a $*$-vector space. 
 Indeed this may be done with the latter contained in a  nc hyperplane not passing through 0. 
 \end{prop}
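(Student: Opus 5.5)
The plan is to go through the abstract characterization just established (Theorem \ref{defmc}) rather than build an embedding by hand. Given a complex matrix convex set $K$ in $V$, we first replace $K$ by a copy lying in a nc hyperplane not passing through $0$. Set $K'_n = \{ (k, I_n) : k \in K_n \} \subset M_n(V \oplus \bC)$. This is a matrix convex set, since direct sums and isometric compressions act coordinatewise and $\gamma^* I_m \gamma = I_n$ when $\gamma^*\gamma = I_n$. The map $k \mapsto (k,I_n)$ is a nc affine bijection. With $f$ the projection onto the $\bC$ coordinate, $K'_1$ lies in the hyperplane $f=1$, and by the remark preceding the proposition $K'$ lies in the corresponding nc hyperplane.

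Next we apply the converse part of Theorem \ref{defmc}: $K'$ is an abstract matrix convex set. Its $1$-compressions $\xi^* k \xi \in K'_1$ satisfy (M1)$^\prime$, (M2)$^\prime$ and (M3)$^\prime$. The forward direction of Theorem \ref{defmc} then says that $\tilde{K'}$, the set of matrices $[k_{ij}]$ with entries in $Z = (X_{K'_1})_c$ given by the polarization formula (\ref{pn}), is a selfadjoint matrix convex set in the $*$-vector space $Z$. It also says that $k \mapsto [k_{ij}]$ is an abstract nc affine isomorphism $K' \cong \tilde{K'}$. Composing with the first step gives the matrix affine embedding of $K$ into $Z$.

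For the hyperplane statement, note that the base function $\varphi$ of $X_{K'_1}$ (the linear extension from Lemma \ref{a} of the constant function $1$ on $K'_1$), complexified to $Z$, is $1$ on $\tilde{K'}_1 = K'_1$. By the argument above the proposition, $\varphi^{(n)}([k_{ij}]) = I_n$ for all $n$. So $\tilde{K'}$ sits in a nc hyperplane not through $0$.

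The step that needs the most care is matching the abstract compressions with the concrete ones. Since $K'_1$ lies in a hyperplane missing $0$, Lemma \ref{a}(3) gives a one-to-one linear map $X_{K'_1} \to V\oplus\bC$ extending the inclusion, and its complexification is a complex linear map $\theta : Z \to V \oplus \bC$. We must check that $\theta^{(n)}([k_{ij}])$ is the original matrix $(k,I_n)$; this holds because (\ref{pn}) is exactly the polarization identity recovering matrix entries of a concrete matrix from its compressions. This fact is what justifies (M2)$^\prime$ for $K'$. It also shows that the embedding into $Z$ is compatible with the original one in $V \oplus \bC$.

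We do not need $\theta$ to be injective on $Z$, only that the map $K \to \tilde{K'}$ is nc affine and injective, which follows from Theorem \ref{defmc}. Injectivity of $\theta$ on $Z$ is the one potential subtlety, since $K'_1$ need not be selfadjoint in any sense inside $V \oplus \bC$.
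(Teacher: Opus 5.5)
Your argument is correct, but it takes a different route from the paper's.

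\textbf{The paper's route.} It argues dually, in the style of Webster--Winkler. It forms the $*$-vector space $A_{\bC}(K)$ of scalar matrix affine functions on $K$, with involution $(f^*)_n(k) = f_n(k)^*$ and unit $1$. Each $k \in K_n$ goes to the evaluation map $\theta_n(k) : f \mapsto f_n(k)$, a selfadjoint ucp map into $M_n$. Injectivity holds because every linear functional $\varphi$ on $V$ yields the element $(\varphi^{(n)}|_{K_n})$ of $A_{\bC}(K)$. The nc hyperplane is the one defined by $1$.

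\textbf{Your route.} You pass through the universal space $Z = (X_{K'_1})_c$ and the polarization construction of Theorem \ref{defmc}. This gives a canonical target determined by $K_1$ alone, and an explicit embedding described through matrix entries.

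\textbf{What the paper's route buys.} It is short and self-contained, using none of the axioms (M1)--(M3) nor the lemmas relating (M1), (M1)$^\prime$ and (M1)$^{\prime\prime}$. This matters for the exposition: the Remark after the proposition uses the proposition to give an independent second proof of the identification $K \cong \tilde{K}$. With your proof that Remark would no longer be independent. There is no circularity in proving the proposition itself, since Theorem \ref{defmc} does not rely on it.

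\textbf{Your hyperplane trick.} It is not needed for the final assertion. The base function of $X_{K_1}$ is already $1$ on $K_1$, so Theorem \ref{defmc} applied directly to $K$ puts $\tilde{K}$ in a nc hyperplane missing $0$.

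What the trick does buy, and you could make this explicit, is a direct check of (M1) and (M2) for $K'$. Since $[k_{ij}]$ is selfadjoint, both $\xi^*[k_{ij}]\xi$ and $\xi^* k \xi$ lie in $Z_{\rm sa} = X_{K'_1}$. There $\theta$ restricts to the injective real linear map of Lemma \ref{a}(3). So your identity $\theta^{(n)}([k_{ij}]) = (k, I_n)$ yields $\xi^*[k_{ij}]\xi = \xi^* k \xi$, and injectivity of $k \mapsto [k_{ij}]$, without appealing to the parallelogram-law lemma.
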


\begin{proof} Similarly to  the discussion before \cite[Proposition 3.5]{WW}, 
 let $A_{\bC}(K)$ denote the ordered `unital' 
 $*$-vector space of `scalar' matrix affine functions on $K$, with $M_n(A_{\bC}(K))$ identified with
 the $M_n$-valued matrix affine functions on $K$.  Similarly to  the proof of 
 \cite[Proposition 3.5 (2)]{WW}, there is a canonical `evaluation map' $\theta$ from $K$ into the 
 selfadjoint  complex matrix convex set $L = (L_n)$ of $M_n$-valued linear ucp selfadjoint  
 maps on $A_{\bC}(K)$.  
 For $f : K \to M_n$ matrix affine we have $(f^*)_n(k) = f_n(k)^*$ for $k \in K_n$.
Thus $$\theta_n(k)^*(f) =  \theta_n(k)(f^*)^* =(f^*)_n(k)^* =   f_n(k) = \theta_n(k)(f).$$  
Therefore indeed $\theta_n(k)$ is selfadjoint for $k \in K_n$.
It is easy to see as in that reference that $\theta$ is matrix affine.
 Since the linear functionals on $V$ separate points of $V$, and since every such linear functional
 restricts to an element of $A_{\bC}(K)$, it follows as in  the proof of 
 \cite[Proposition 3.5 (2)]{WW} that $\theta$ is one-to-one.  This proves the  first assertion.

 Note that $\theta_1(K_1)$ is contained in the hyperplane defined by $1$ in $A_{\bC}(K)^*,$ 
 and in $(A_{\bC}(K)^*)_{\rm sa}$. Indeed $\theta(K)$  is contained in the nc hyperplane defined by $1$.
 \end{proof}

{\bf Remark.}  We often identify a matrix convex set $K$ with its matrix affinely isomorphic image   $\tilde{K}$ in the vector space $Z$.   Recall that $\widetilde{K_n} = \{ [k_{ij}] : k \in K_n \}$. The proposition leads to  another proof of this.
Indeed suppose, using the proposition, that $V$ is a complex $*$-vector space with $K$ matrix affinely embedded 
as a complex matrix convex set $L$ in $V$.
Thus  there is a one-to-one matrix affine function $\epsilon : K \to V$ with $\epsilon(K) = L$.
We may assume that $L_1$ lies in a hyperplane in $V_{\rm sa}$ not containing 0, by the usual trick.   In this case $L$ lies in a nc hyperplane not containing 0, as we said above the proposition. Lemma \ref{a}  shows that $\epsilon_1$ extends to a 
linear map $\tilde{\epsilon} : Z  \to V$ taking $\tilde{K}$ onto $L$, and this map is a selfadjoint linear isomorphism onto its range if $L_1$ lies in a hyperplane in $V_{\rm sa}$ not containing 0. 
Indeed if $k \in K_n$ then since $\xi^*  \epsilon_n(k) \xi = \epsilon(\xi^* k \xi)$ for all unit vectors $\xi \in \bC^n$, we have 
$$\tilde{\epsilon}(k_{ij}) = \tilde{\epsilon}(\frac{1}{4} \sum_{k=0}^3 \, \iota^k (\eta + \iota^k \xi)^* k (\eta + \iota^k \xi) )
= \frac{1}{4} \sum_{k=0}^3 \, \iota^k  (\eta + \iota^k \xi)^* \epsilon_n(k) (\eta + \iota^k \xi) = \epsilon_n(k)_{ij} ,$$ where $\eta = e_j, \xi = e_i$.  So $\tilde{\epsilon}^{(n)}([k_{ij}]) = \epsilon_n(k)$.  Thus $$\xi^*  \epsilon_n(k) \xi = \xi^* \tilde{\epsilon}^{(n)}([k_{ij}]) \xi =  \epsilon(\xi^* k \xi),$$ giving (M1).
Indeed 
the inverse of $\tilde{\epsilon}$ maps $L$ onto a  matrix convex set in $Z$, which is exactly the nc set $\tilde{K}$ defined after  (\ref{pn}). Of course $(\tilde{\epsilon}^{(n)})$ is a nc affine map, and so is its inverse.

 \subsection{An alternative proof of the matrix convex set characterization}
 
 We first observe that  a noncommutative generalization of the somewhat tautological 
 characterization of convex sets
 in \cite{Gud} follows almost trivially using the $A_{\bC}(K)$  construction in the proof of Proposition \ref{isaf}, or its abstract version.  We give some more details
 (a full account may be found in the second author's thesis).
 A \emph{matrix abstract structure}
on a graded set $K = (K_n)$ consists of maps
$$
T_{\alpha_1,\dots,\alpha_k}:
K_{n_1}\times\cdots\times K_{n_k}\longrightarrow K_n
$$
for every finite family
$\alpha_i\in M_{n_i,n}$ with $\sum_{i=1}^k\alpha_i^*\alpha_i=I_n$ for $n_i\in \bN.$
(By the usual tricks, having a matrix abstract structure is equivalent to being closed 
under  the `abstract direct sum', which are just $T_{\alpha,\beta}$ where $\alpha = [I \; 0 ]$ and $\beta = [0 \; I]$, and under `abstract compressions' $T_{\gamma}$ for  matrix isometries $\gamma \in M_{n,m}$.) 
Let $A(K)$ denote the  space of nc functions 
$$
f=(f_n)_{n\geq 1},
\qquad
f_n:K_n\to M_n,
$$
such that for every $\alpha_1,\ldots,\alpha_k$ and 
$x_i\in K_{n_i}$ as above, we have
$$
f_n\!\left(T_{\alpha_1,\ldots,\alpha_k}(x_1,\ldots,x_k)\right)
=
\sum_{i=1}^k \alpha_i^* f_{n_i}(x_i)\alpha_i.
$$
Elements of $A(K)$ are called (abstract) matrix affine functions on $K$. As  in the proof of Proposition \ref{isaf}, the collection $A(K)$ is a  vector space under pointwise operations.  

Similarly the  (abstract) matrix affine functions $A(K,M_r)$ is a  vector space which may be identified with 
$M_r(A(K))$, as is by now familiar (see e.g.\ \cite{WW}).
For $x\in K_n$, define $\kappa_n(x):A(K)\to M_n$ by $\kappa_n(x)(f)=f_n(x).$ Let 
$\kappa=(\kappa_n) :K\to A(K)^\prime$. Here $A(K)^\prime$ is the vector space dual of $A(K)$ with 
$M_n(A(K)') = {\rm  Lin}(A(K),M_n)$.
The `theorem' then is that if $A(K)$ completely separates points of $K$ then $\kappa:K\to A(K)^\prime$ is matrix affine and injective.  Thus $K$ is abstractly matrix affine 
isomorphic to a matrix convex set in a vector space.  As suggested above, the proof is trivial: $\kappa$ is 
 matrix affine by the definitions, and is injective since $A(K)$ completely separates points of $K$.
 
 We next remark that just as was the case with condition (M2)$^\prime$ above, $A(K)$ completely separating points of $K$ is equivalent to the following pair of conditions holding: a)\ $A(K)$ separating points of $K_1$, and b)\ 
compressions (to level 1) separating points of $K$ (that is, condition (M2)$^{\prime}$ holds).  Indeed 
if  $x\neq y$ in $K_n$, then by b)\ there 
is a unit vector $\xi\in\mathbb C^n$ such that $T_\xi(x)\neq T_\xi(y)$ in $K_1$.  Then 
$f(T_\xi(x)) \neq f(T_\xi(y))$ for some $f \in A(K)$ by a). Hence  $\xi^* f_n(x) \xi \neq \xi^* f_n(y) \xi$, so that $f_n(x)  \neq f_n(y)$.  (We also note that this argument is essentially reversible.  It also demonstrates that 
$A(K)$ completely separates points of a concrete matrix convex set $K$, giving the converse part of the characterization  `theorem' in the last paragraph.) 

This now gives an alternative proof of the characterization of matrix convex sets in Theorem \ref{defmc} 
(or Theorem \ref{defmck}) which the reader might prefer.  That is,  {\rm (M1)$^{\prime \prime}$}, {\rm (M2)$^{\prime}$}, and {\rm (M3)$^{\prime}$} are together equivalent to 
$K$ being abstractly nc affinely isomorphic to a concrete matrix convex set.  Indeed by the last paragraphs
we need only show that $A(K)$ separates points of $K_1$.   Since the linear dual of $X_{K_1}$ separates points of $K_1$,  
 we need only establish the 
Claim:  for each linear $\varphi :  X_{K_1} \to \bR$ there exists $f \in A(K)$ with $f_1 = \varphi_{| K_1}$. To this end, suppose that $k \in K_n$ and $\xi \in \bC^n$.  By the first part of the proof of the converse direction of Lemma \ref{M1M1'} there exists a selfadjoint sesquilinear form $B$ on $\bC^n$
with $B(\xi, \xi) =  \varphi(\xi^* k \xi)$.  By undergraduate matrix theory 
there is a unique selfadjoint matrix $A \in M_n(\bC)$ with 
$$\varphi(\xi^* k \xi) = \langle A \xi , \xi \rangle, \qquad \xi \in \bC^n .$$   Define $f_n(k) = A$, so that 
$\langle f_n(k)  \xi , \xi \rangle = \varphi(\xi^* k \xi)$ for $\xi \in \bC^n$.   Then (M3)$^\prime$
implies that $f$ is matrix affine.   Indeed if $\gamma \in M_{nm}$ is an isometry then
$$\langle  f_m(\gamma^* k \gamma)  \xi , \xi \rangle =  \varphi(\xi^* \gamma^* k \gamma \xi) = 
\langle  f_n(k) \gamma \xi , \gamma \xi  \rangle = \langle  \gamma^* f_n(k) \gamma \xi ,  \xi  \rangle , \qquad \xi \in \bC^n .$$
So $f_m(\gamma^* k \gamma) = \gamma^* f_n(k) \gamma$ by
(M2)$^\prime$. 
    Similarly, $\langle  f_{n+m}(k_1 \oplus k_2)  \xi , \xi \rangle =  \varphi(\xi^* (k_1 \oplus k_2)  \xi)$ for $k_1 \in K_n, k_2 \in K_m$, which equals 
$$\varphi(\xi_1^* k_1 \xi_1 + \xi_2^* k_2 \xi_2) = 
\langle  f_n(k_1) \xi_1 , \xi_1  \rangle + \langle  f_m(k_2) \xi_2 , \xi_2  \rangle = \langle (
f_n(k_1) \oplus   f_m(k_2) ) \xi ,  \xi  \rangle.$$ So $f_{n+m}(k_1 \oplus k_2) = f_n(k_1) \oplus   f_m(k_2)$ by
(M2)$^\prime$. 
Thus $f$ is matrix affine.   Moreover $f_1 = \varphi_{| K_1}$. This proves the Claim.

\subsection{What is a  matrix  compact convex set?} \label{wiamccs}  

We say that an (abstract) matrix convex set $K$  is {\em matrix topologically convex} if there is a Hausdorff topology $\tau_n$ on $K_n$ for all $n$ with respect to which the `matrix convexity operations'
are 
continuous (so $\sum_{k=1}^m \gamma_k^* v_k  \gamma_k$ is continuous in the $\gamma_k, v_k$).    This implies (using e.g.\ (\ref{pn})) that the 
entries $k_{ij}$ are a continuous function of $k \in K_n$ whenever $K$ is a matrix convex set in a TVS (with the topology on $K_n$ being the relative product topology).   
We will see below that $K$ is matrix topologically convex if and only if  $K_1$ is topologically convex
and the $K_1$-compressions $\xi^* k \xi$ are continuous in $k \in K_n$ for all $n \in \bN$ and fixed $\xi \in \bC^n, \| \xi \| = 1$.
Or one may replace the last condition by saying that the entries $k_{ij}$ are a continuous function of $k \in K_n$ for all $n \in \bN$.
 See also e.g.\ Proposition  \ref{isc}  for another example of this trick.

Now suppose that the matrix topologically convex set $K$ in the last paragraph is matrix compact, so that $K_n$ is compact for all finite $n$.  In this case we simply say that $K$ is an {\em abstract matrix compactly convex set}.  The reader should take note of this somewhat subtle usage in the rest of our paper; we really mean by this an  `abstract compact matrix topologically convex set'. 
Since $K_1$ is compact and topologically convex there is a canonical TVS topology $\mathcal{T}$ on the real base norm space 
$X_{K_1}$ and on its complexification $Z$ 
described in Lemma \ref{p} and the remark after it.  Hence there is also a canonical TVS topology $\sigma_n$ on 
$M_n(Z)$ for each $n \in \bN$ (the `product topology' of $n^2$ copies of $\mathcal{T}$).

The first statement of the next theorem yields an abstract  characterization of (complex)  compact matrix  convex sets in a Hausdorff TVS,
as  the  abstract matrix  compactly  convex sets.  It follows from the later Corollary \ref{xkn} that this TVS may be taken to be 
an LCTVS if and only if in addition $K_1$ is locally convex. 

\begin{thm} \label{vae}    If $(K,\tau)$ is a 
 (complex)  abstract matrix  compactly  convex set 
 then $(K,\tau)$ is matrix affinely homeomorphic to its canonical (compact matrix convex set) image  
 $\tilde{K}$ in the TVS  $Z$.
\begin{enumerate}
\item [{\rm (1)}]  If $(K,\tau)$ is a 
 (complex)  abstract matrix  compactly  convex set 
 then the $i$-$j$-entries of $k \in K_n$ 
are continuous maps from $(K_n,\tau_n)$ to $(Z,\mathcal{T})$, for each $n \in \bN$.  
We have $\tau_n = \sigma_n$ on $K_n$ for each $n \in \bN$ (identifying $K_n$ and $\widetilde{K_n}$).    
\item [{\rm (2)}]  A matrix convex set $K$  for which $K_1$ is compact and topologically convex in some topology, 
is an abstract  matrix compactly convex set (with the same level 1 topology) if and only if its canonical image  in the TVS $Z$ is  matrix compact. 
\item [{\rm (3)}]  A compact matrix convex set $K$ in a Hausdorff TVS $V$ is an abstract  matrix compactly  convex set.
If further $V$ is a LCTVS then so is 
$X_{K_1}$
and its complexification $Z$. 
\item [{\rm (4)}]   A (complex)  matrix convex set $K$ which is compact in 
some (graded) topology on $K$, is matrix topologically convex if and only if  $K_1$   is topologically convex
and the compressions $\xi^* k \xi$ are continuous in $k \in K_n$ for all $n \in \bN,$ and fixed $\xi \in \bC^n, \| \xi \| = 1$.
One may replace this last condition here by   the entries $k_{ij}$ being continuous functions of $k \in K_n$ for all $n \in \bN$. 
\end{enumerate} 
 \end{thm}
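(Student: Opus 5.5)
The plan is to do (1) first and derive the main assertion from it. Identify $K$ with $\tilde K$ via Theorem \ref{defmc}, so $k\mapsto [k_{ij}]$ is a bijection $K_n\to\widetilde{K_n}\subset M_n(Z)$. For (1), use (\ref{pn}): each entry $k_{ij}$ is a fixed complex linear combination of at most four $1$-compressions $(\eta+\iota^k\xi)^*k(\eta+\iota^k\xi)$. Each of these is $\|\eta+\iota^k\xi\|^2$ times a compression by a unit vector, so it is a matrix convexity operation $\gamma^* k\gamma$ with $\gamma$ a fixed isometry $\bC\to\bC^n$. By matrix topological convexity it is $\tau_n$-to-$\tau_1$ continuous into $K_1$. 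By Lemma \ref{p} (and the remark after it), $\tau_1$ agrees with the relative topology of $\mathcal T$ on $K_1\subset X_{K_1}\subset Z$. Since $Z$ is a TVS, the fixed linear combination $k\mapsto k_{ij}$ is therefore $\tau_n$-to-$\mathcal T$ continuous. Hence $k\mapsto[k_{ij}]$ is a continuous bijection from the compact space $(K_n,\tau_n)$ onto $\widetilde{K_n}$ with the relative product topology $\sigma_n$, which is Hausdorff because $\mathcal T$ is. A continuous bijection from a compact space to a Hausdorff space is a homeomorphism, so $\tau_n=\sigma_n$. In particular $\widetilde{K_n}$ is $\sigma_n$-compact, and the main assertion follows: $K\cong\tilde K$ matrix affinely (Theorem \ref{defmc}) and homeomorphically at each level.

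For (2), one direction is the main assertion. Conversely, suppose $\tilde K$ is matrix compact in $Z$ with $\tau_1=\mathcal T|_{K_1}$. Give $K_n$ the topology $\sigma_n$. The matrix convexity operations $\sum_k\gamma_k^*v_k\gamma_k$ are restrictions of maps polynomial in the scalars $\gamma_k$ and linear in the entries of the $v_k$. These are jointly continuous because $M_n(Z)$ is a TVS (scalar multiplication and addition are continuous). So $K$ is matrix topologically convex and compact at every level. For the first part of (3), apply (2) in reverse. The topologies on $K_n\subset M_n(V)$ make the convexity operations continuous for the same TVS reason, and each $K_n$ is compact. So $K$ is an abstract matrix compactly convex set. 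For the LCTVS part, $K_1$ is compact convex in an LCTVS and hence locally convex, since relative convex neighbourhoods form a base. Lemma \ref{r} then gives that $X_{K_1}$ and $Z$ carry Hausdorff LCTVS topologies extending the topology of $K_1$. These can be identified with $\mathcal T$ via Lawson's description of the quotient topology in Theorem \ref{llm}.

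For (4), the forward implication is immediate: $K_1$ is the case $n=1$, and compressions are special matrix convexity operations. For the converse, use the argument of (1). Continuity of compressions in $k$ gives continuity of the entries, and conversely, by (M1), $\xi^*k\xi=\xi^*[k_{ij}]\xi$ recovers compressions from entries continuously. This yields the equivalence of the two final conditions. Continuity of the entries plus compactness of $(K_n,\tau_n)$ gives $\tau_n=\sigma_n$ exactly as in (1). Then joint continuity of the operations follows as in (2).

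The main obstacle is that the argument in (1) yields only separate continuity in $k$ for fixed $\xi$, while (4) requires joint continuity in $(\gamma_k,v_k)$. The step I rely on to bridge this is the compact-to-Hausdorff homeomorphism trick. A secondary point to check in (3) is that the LCTVS topology of Lemma \ref{r} coincides with the canonical $\mathcal T$ on $X_{K_1}$, rather than merely extending the topology of $K_1$.
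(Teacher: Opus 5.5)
Your proposal is correct and follows essentially the same route as the paper. You get continuity of the entries via (\ref{pn}) and topological convexity, then $\tau_n=\sigma_n$ from compactness of $\tau_n$ and Hausdorffness of $\sigma_n$; your compact-to-Hausdorff lemma is exactly the paper's convergent-subnet argument, and (2)--(4) then follow from this together with continuity of the operations in the TVS $M_n(Z)$. For the LCTVS clause of (3) you need no identification with the weak* topology of Lemma \ref{r}, which in general is only guaranteed to agree with $\mathcal T$ on bounded sets: Theorem \ref{llm}, applied to the Lawson cone of $K_1$ (locally convex when $K_1$ is), shows directly that the quotient topology $\mathcal T$ is locally convex.
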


\begin{proof}  (1) and (2) \  To see that the $i$-$j$-entries of $k \in K_n$ 
are continuous,
let $k^t \to k$ in $\tau_n$.  Then $\xi^* k^t \xi \to \xi^* k \xi$ in $\tau_1$ and therefore also in $\mathcal{T}$, for each  unit vector $\xi \in \bC^n$.  Thus $k^{t}_{ij} \to k'_{ij}$ in $Z$
by Eq.\ (\ref{pn}). 
Clearly $k_t \to k$ in $\sigma_n$.  Conversely suppose that  $k_t \to k$ in $(K_n,\sigma_n)$ and a converging subnet  $k_{t _\mu}\to k'$ in $\tau_n$.
Then $$\frac{1}{2}(\eta + i^k \xi)^* k^{t _\mu} (\eta + i^k \xi) \to \frac{1}{2}(\eta + i^k \xi)^* k' (\eta + i^k \xi), \qquad \eta = e_j, \xi = e_i$$ in $K_1$. 
Thus $k^{t _\mu}_{ij} \to k'_{ij}$ in $Z$, so that $k^{t _\mu}\to k'$ in $\sigma_n$. So $k = k'$ and $\tau_n = \sigma_n$. 
The converse in (2)  is clear from the above. 

(3)\   The first assertion is clear from (1) say.
If $V$ is a LCTVS then $X_{K_1}$ is a LCTVS, as is its complexification $Z$, by Lemma \ref{r}.  

(4)\ The proof of (1) shows that the compressions to $K_1$ are continuous if and only if  the entries $k_{ij}$ are continuous
(the converse clearly following from 
condition (M1) above).  It also shows that $K$ is matrix affinely homeomorphic to $\tilde{K}$, which implies that $K$ is 
topologically convex.  The converse is obvious. 
 \end{proof}

 The following (also sometimes encountered in \cite{DK,WW}) is a useful criterion for when a nc map between matrix convex sets is nc affine or nc continuous:
 
 \begin{prop} \label{isc}  A nc function $g : K \to L$ between complex matrix compact convex sets is matrix 
 affine if and only if $g_1$ is affine and satisfies 
 $\xi^*  g_n(k) \xi = g_1(\xi^* k \xi)$ for all unit vectors $\xi \in \bC^n$ and $k \in K_n, n \in \bN$.  If these hold then $g$ is  nc continuous if and only if  the 
  linear extension $\widetilde{g_1} : X_K \to X_L$ is continuous with respect to the canonical TVS topologies on $X_K$ and $X_L$; and  if and only if  $g_1$ is continuous. 
\end{prop}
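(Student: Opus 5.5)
For the affine characterization, I would use Theorem \ref{defmc} and its proof to identify $K$ and $L$ with their canonical images $\tilde K\subset M_n(Z_K)$ and $\tilde L\subset M_n(Z_L)$. The forward direction is immediate. If $g$ is matrix affine, it preserves compressions by the isometries $\xi\in M_{n,1}$. Hence $g_1(\xi^*k\xi)=\xi^*g_n(k)\xi$, and $g_1$ is affine because convex combinations at level 1 are matrix convex combinations.

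For the converse, assume $g_1$ is affine and the compression identity holds. By Lemma \ref{a} (2), $g_1$ has a unique complex linear extension $\widetilde{g_1}:Z_K\to Z_L$, which is selfadjoint. Applying $\widetilde{g_1}$ to the polarization formula (\ref{pn}) gives $\widetilde{g_1}(k_{ij})=g_n(k)_{ij}$. This is the same computation as in the Remark after Proposition \ref{isaf}. Hence $g_n=\widetilde{g_1}^{(n)}$ on $K_n$ (via (M2)). A restriction of the amplifications of a linear map preserves matrix convex combinations, so $g$ is matrix affine. Alternatively, one can check directly that $g$ preserves compressions and direct sums by computing $\xi^*(\cdot)\xi$ on both sides and invoking (M2)$^\prime$ in $L$.

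For continuity, given that $g$ is matrix affine, I would prove a cycle of implications. If $g$ is nc continuous then $g_1$ is continuous trivially. If $g_1$ is continuous, then Lemma \ref{ab} (4) applies, with $V=Z_L$ carrying the TVS topology of Lemma \ref{p}: this is a Hausdorff TVS in which $L_1$ sits homeomorphically. The lemma gives that $\widetilde{g_1}$ is continuous for the canonical TVS topologies, in both the real and complex versions. Finally, suppose $\widetilde{g_1}$ is continuous. Then $\widetilde{g_1}^{(n)}$ is continuous from $(M_n(Z_K),\sigma_n)$ to $(M_n(Z_L),\sigma_n)$, since the $\sigma_n$ are product topologies and the amplification acts entrywise. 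By Theorem \ref{vae} (1), $\tau_n=\sigma_n$ on $K_n$ and on $L_n$. Since $g_n=\widetilde{g_1}^{(n)}$ on $K_n$, it follows that each $g_n$ is continuous, so $g$ is nc continuous.

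The main obstacle is bookkeeping rather than depth. The key points are the identification of the entries of $g_n(k)$ with $\widetilde{g_1}(k_{ij})$, and making sure that the topology on $X_L$ restricted to $L_1$ is the original one. The latter holds by Lemma \ref{p}, so that Lemma \ref{ab} (4) applies with codomain $X_L$.
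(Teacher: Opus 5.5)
Your proposal is correct and follows essentially the same route as the paper. You extend $g_1$ linearly via Lemma \ref{a} and use the polarization computation from the Remark after Proposition \ref{isaf} to get $g_n = \widetilde{g_1}^{(n)}$ on $K_n$. For continuity you invoke Lemma \ref{ab} (4) in one direction and continuity of amplifications in the other. The one difference is that you spell out, via Theorem \ref{vae} (1), that the topologies on $K_n$ and $L_n$ coincide with the product topologies $\sigma_n$. The paper uses this tacitly when it says $g_n$ is continuous ``being a restriction of $\widetilde{g_1}^{(n)}$''.
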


\begin{proof}  The one direction of the first `if and only if' is obvious.
For the other direction,  Lemma \ref{a}  shows that $g_1$ extends to a 
linear map $\widetilde{g_1} : X_K  \to X_L$ 
If $k \in K_n$ then since $\xi^*  g_n(k) \xi = g_1(\xi^* k \xi)$ for unit vectors $\xi \in \bC^n$, we have by 
 an argument in the Remark after Proposition \ref{isaf} 
  that  $\widetilde{g_1}^{(n)}(k) = 
g_n(k)$.  That is,  $g_n$ may be identified with the restriction of $\widetilde{g_1}^{(n)}$ to the copy of $K_n$ in $X_K$.
Thus  $g$ is matrix affine.  
 
 If $\widetilde{g_1}$ is continuous then so is $\widetilde{g_1}^{(n)}$ for all $n$, and hence so is $g_n$, being a restriction of $\widetilde{g_1}^{(n)}$.  Conversely, if $g_1$ is continuous then so is $\widetilde{g_1}$
 by Lemma \ref{ab} (4). \end{proof}

\subsection{The universal operator space $X_K$ of a complex matrix convex set}  \label{uos} 

If $K$ is a selfadjoint complex matrix convex set in a complex $*$-vector space $V$, and if $K_1$ is contained in a hyperplane not passing through 0, let  
 $$C_n = \{ \gamma^* k \gamma : k \in K_m , \gamma  \in M_{m,n} : m \geq n \} \subset M_n(V)_{\rm sa}, \qquad 
\cC = (C_n).$$  This is the {\em matrix cone  generated by} $K$.   It is well defined independently of the 
superspace $V$ up to `nc affine zero-preserving isomorphism'.
Indeed any nc affine embedding $\epsilon : K \to W$  into another complex $*$-vector space extends to a linear isomorphism taking $\cC$ bijectively onto the matrix cone  generated by $\epsilon(K)$, by an argument 
in the Remark after Proposition \ref{isaf}.

We will review basic definitions of nc base norm spaces at the start of Section \ref{mcs}.  For now we merely state: 

\begin{thm} \label{xknp}   Suppose that $K = (K_n)$ is an abstract complex   matrix  compactly convex set.
Then $K$ may be viewed as a selfadjoint complex compact matrix convex  set in the complexification $Z$ of  the real base norm space $X_{K_1}$ constructed from $K_1$ in Section {\rm \ref{wiaccu}}. If $\cC$ is the 
matrix cone in $Z$ generated by this copy of $K$ then 
$X_K = (Z,\cC,K)$ is a complex nc base norm space with nc base $K$.  If in addition $K_1$ is locally convex then  
$X_K$ is the dual of an operator system,  and is a nc dual base norm space with nc dual base $K$.   In this case $X_K \cong \bA(K)^*$ as 
 nc dual base norm spaces (via a map which is a weak* homeomorphism, a nc base morphism, and a completely isometric complete order isomorphism).
\end{thm}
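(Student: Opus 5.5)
The plan is to assemble the statement from Theorem \ref{vae}, Lemma \ref{p}, Lemma \ref{r} and the Webster--Winkler duality, with the definition of nc base norm space from \cite{BH} as the target.

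\textbf{Step 1 (the embedding).} By Theorem \ref{defmc} and Theorem \ref{vae}, $K$ is matrix affinely homeomorphic to its canonical image $\tilde K$. This image is a selfadjoint compact matrix convex set in the $*$-TVS $Z=(X_{K_1})_c$. Since $K_1$ lies in the hyperplane $\{\varphi=1\}$, where $\varphi$ is the base function of $X_{K_1}$, the remark before Proposition \ref{isaf} shows that $\tilde K$ lies in the nc hyperplane $\{\varphi^{(n)}(x)=I_n\}$. Hence the matrix cone $\cC=(C_n)$ generated by $K$ is proper and has $K$ as an nc base. Concretely, each nonzero $x\in C_n$ is $\alpha^* k\alpha$ with $k\in K_n$ and $\alpha=\varphi^{(n)}(x)^{1/2}$ up to support, as in the nc base axioms of \cite{BH}.

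\textbf{Step 2 (the nc base norm space).} Next I would equip $M_n(Z)$ with the nc base norms. These are the Minkowski functionals of the absolutely matrix convex hull generated by the $K_m$, i.e.\ of the sets of $\alpha^* k \beta$-type combinations in \cite{BH}. I then need to verify the defining conditions of a complex nc base norm space from \cite{BH}. At level $1$, Lemma \ref{p} already shows that $X_{K_1}$ is a real base norm space with norm closed base $K_1$. The complex level-one space is its dual Taylor complexification, which \cite{BH} identifies with the complex base norm space $Z$. For the matrix levels I would invoke the characterization in \cite{BH}: a matrix ordered $*$-space with matrix cone $\cC$ and nc base $K$ is an nc base norm space once level $1$ is a complex base norm space and the cones $C_n$ are closed. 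Closedness of $C_n$ follows by compactness. If $\gamma_t^*k_t\gamma_t\to x$, then applying $\varphi^{(n)}$ bounds $\gamma_t$ (with $m=n$ sufficing after compressing). Passing to subnets, using compactness of $K_n$ and continuity from Theorem \ref{vae}(1), gives $x\in C_n$.

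\textbf{Step 3 (the locally convex case).} Suppose $K_1$ is locally convex. By Lemma \ref{r}, $A(K_1)$ separates points, $X_{K_1}\cong A(K_1,\bR)^*$, and $Z\cong A(K_1,\bC)^*$ weak* homeomorphically via an isometric base morphism. By Theorem \ref{vae}(3) and Lemma \ref{r}, $Z$ is an LCTVS, and $\tilde K$ is a compact matrix convex set there. The Webster--Winkler duality \cite[Proposition 3.5]{WW} then shows that $\bA(K)$ is an operator system, and that the evaluation map $\theta:K\to \bA(K)^*$ is a matrix affine homeomorphism onto the matrix state space. Extending $\theta_1$ linearly via Lemma \ref{a} and Proposition \ref{isc}, as in the Remark after Proposition \ref{isaf}, gives a selfadjoint linear bijection $Z\to \bA(K)^*$ whose amplifications carry $K_n$ onto the matrix states. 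It is therefore an nc base morphism with inverse also an nc base morphism. By the nc analogue of Lemma \ref{bne} noted before it, such a map is a completely isometric complete order isomorphism. It is a weak* homeomorphism because at level $1$ it restricts to the map of Lemma \ref{r}; here I use that $\bA(K)_{\rm sa}$ at the scalar level is $A(K_1,\bR)$ via $f\mapsto f_1$, which holds by the Claim in the alternative proof subsection.

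\textbf{Main obstacle.} The hardest part is Step 2, checking that the nc base norms at all levels match the axioms of \cite{BH} in the merely TVS (non-locally convex) case, where no duality is available. I would try to reduce this to level $1$ together with closedness of the cones $C_n$, relying on \cite{BH}'s result that the nc base norm is determined by the level-one complex base norm and the matrix order.
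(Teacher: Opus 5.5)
Your outline is correct, but it reaches the result by a different route from the paper's. The paper gets the statement as Corollary \ref{xkn}, by applying Theorem \ref{isnbns} to the copy of $K$ in $Z$.

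\textbf{Comparison.} After checking conditions (a) and (b) of \cite[Theorem 4.4]{BH} much as in your Step 1, the paper proves the nondegeneracy condition (c) and the norm-closedness of $C_n$ directly at level $n$. It adds a term $a^{1/2} w a^{1/2}$ to turn the relevant identity into a matrix convex combination inside $K_n$, then uses matrix topological convexity and compactness. It never uses the TVS topology of $Z$. In the locally convex case it shows that ${\rm Ball}(M_n(X))$ is compact in the product of the level-one weak* topology from Corollary \ref{s}. It then gets a predual from the Dixmier--Ng theorem and \cite[1.6.4]{BLM}, and only afterwards identifies $X_K$ with $\bA(K)^*$ via \cite{BH}.

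Your Step 3 gets all of this in one stroke from \cite[Proposition 3.5]{WW}. The linear extension of $\theta_1$ carries the matrix cones and bases onto those of $\bA(K)^*$, so it is a complete isometry, because the nc base norms are infima defined from cone and base function. You then transport the weak* topology, and the Claim in the alternative-proof subsection shows that it agrees with the topology of Lemma \ref{r}. This is shorter, but it rests on the Webster--Winkler representation theorem. The paper's argument is intrinsic, works in an arbitrary ambient $V$ in Theorem \ref{isnbns}, and exhibits the weak* topology on $M_n(X)$ explicitly as the product of the level-one topology, which is reused in Corollary \ref{bregcn}.

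Two minor corrections. Compactness of $\tilde K_n$ for the LCTVS topology of Lemma \ref{r} comes from continuity of the compressions, not from Theorem \ref{vae}(3), which goes the other way. Surjectivity of $\tilde\theta_1$ uses that states span the dual of an operator system.

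\textbf{What you still need to write out in Step 2.} You do not need an unspecified result from \cite{BH}; the reduction to level one is direct.
\begin{itemize}
\item Condition (c): suppose $x \in M_n(Z)_{\rm sa}$ has, for each $\epsilon>0$, a decomposition $x = c^{1/2} y c^{1/2} - d^{1/2} z d^{1/2}$ with $y,z \in K_n$ and $\|c+d\|<\epsilon$. By (M1), each compression by a unit vector $\xi$ is $\xi^* x \xi = ak - bk'$ with $k,k' \in K_1$ and $a+b = \xi^*(c+d)\xi < \epsilon$. So $\xi^* x \xi = 0$ by Lemma \ref{p}, and polarization (\ref{pn}) gives $x=0$.
\item Closedness: the same estimate shows that nc-norm convergence in $M_n(Z)$ forces level-one base-norm convergence of the entries. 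A base-norm null net $c_j k_j - d_j k_j'$ with $c_j,d_j \to 0$ tends to $0$ in the quotient topology coming from the Lawson cone. Hence the norm limit in your closedness argument coincides with the $\sigma_n$-limit $\gamma^* k \gamma$. Your sketch leaves this identification implicit, and it is needed.
\end{itemize}
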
 

We will prove this in Section \ref{mcs}, together with many results which are the variants for the nc/matrix setting of results from Sections above, or from Section \ref{abs} below.
For example, we will have the expected universal property of $X_K$: Any nc bounded (resp.\ positive) matrix affine map  $f : K \to V_{\rm sa}$  into a complex $*$-operator space $V$, has a unique complex  linear completely 
bounded (resp.\ completely positive) extension $\tilde{f} : X_K \to V$.
Indeed since by Theorem \ref{xknp} the universal operator spaces $X_K$ are just the nc base norm spaces with nc base $K$, their study 
may be viewed  in some sense as essentially a part of the theory of nc base norm spaces. 

\section{Applications to base norm spaces} \label{abs} 

\begin{cor} \label{breg} Suppose that $K$ is a compact convex set in a real (resp.\ complex) Hausdorff LCTVS  (resp.\ $*$-LCTVS) $(V,\tau)$,  with $K$ lying in a real hyperplane $H$ in $V$ (resp.\ $V_{\rm sa}$) not passing through 0, 
then $V$ contains (continuously) a dual base norm space $X$ with base $K$, and whose weak* topology agrees with $\tau$ on base-norm bounded sets in $X$.  \end{cor}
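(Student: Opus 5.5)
The plan is to take $X = \bR_+ K - \bR_+ K \subseteq V$ in the real case, and $X = (\bR_+ K - \bR_+ K) + i(\bR_+ K - \bR_+ K)$ in the complex case. We then identify $X$ with $X_K$, respectively $(X_K)_c$, and import the dual base norm structure from Lemma \ref{r}. Since $K$ is a compact convex set in an LCTVS, it is topologically convex and locally convex in the relative topology. Lemma \ref{r} then shows that $X_K \cong A(K,\bR)^*$ (resp.\ $(X_K)_c \cong A(K,\bC)^*$) is a dual base norm space with base $K$. Its weak* topology restricts on $K$ to the original topology on $K$, because the canonical map $K \to A(K)^*$ is an affine homeomorphism onto the state space.

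Since $K$ lies in a hyperplane $H = \{f = 1\}$ not through 0 (in $V_{\rm sa}$ in the complex case), Lemma \ref{a} (3) shows that the inclusion $K \to V$ extends to a linear injection $j : X_K \to V$, or a selfadjoint complex linear injection $(X_K)_c \to V$. Its range is $X$. By Lemma \ref{ab} (4), $j$ is continuous from the canonical TVS topology of Lemma \ref{p} into $(V,\tau)$. However, we want continuity from the weak* topology on bounded sets, and this is really the content of the comparison below. Transport the norm and the predual to $X$ via $j$, so that $X$ becomes a dual base norm space with base $K$ sitting inside $V$.

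The main step is to show that the weak* topology on $X$ agrees with $\tau$ on base-norm bounded sets. First I would handle the ball. By the dual base norm property, ${\rm Ball}(X) = {\rm co}(K \cup -K)$, which is the image of the compact space $K \times K \times [0,1]$ under $(x,y,t) \mapsto tx - (1-t)y$. This map is continuous into $(V,\tau)$ and also into $(X,w^*)$, since $K$ carries the same topology in both and both are TVS topologies. Therefore ${\rm Ball}(X)$ is $\tau$-compact, and the identity map $({\rm Ball}(X), w^*) \to ({\rm Ball}(X), \tau)$ should be a continuous bijection from a compact space onto a Hausdorff space, hence a homeomorphism.

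The obstacle is proving continuity of that identity map. To get it, I would use that the quotient map $q : K \times K \times [0,1] \to ({\rm Ball}(X),w^*)$ is a continuous surjection from a compact space onto a Hausdorff space, hence a quotient map. The composite of the identity with $q$ is the continuous map into $\tau$, so the identity is continuous by the universal property of quotients. The same argument applies to $r\,{\rm Ball}(X)$ by scaling. In the complex case, the ball of the dual Taylor norm is contained in ${\rm Ball}(X_{\rm sa}) + i\,{\rm Ball}(X_{\rm sa})$ and contains a multiple of it. I would therefore repeat the argument with the compact set $r({\rm Ball}(X_{\rm sa}) + i{\rm Ball}(X_{\rm sa}))$, using continuity of the involution and of addition in both topologies. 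This yields agreement of the topologies on all bounded sets.
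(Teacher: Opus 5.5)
Your proposal is correct and follows essentially the paper's route: the same subspace $X=\bR_+K-\bR_+K$ (and its complexification), Lemma \ref{r} for the dual base norm structure, Lemma \ref{a}(3) for injectivity of the inclusion, and Lemma \ref{ab}(4) for its continuity. The one difference is in the comparison of topologies on bounded sets. The paper runs a converging-subnet argument on bounded nets $c_tk_t-d_tk_t'$, whereas you use the same compactness mechanism in packaged form: ${\rm Ball}(X)={\rm co}(K\cup -K)$ is a continuous image of the compact space $K\times K\times[0,1]$ in both Hausdorff topologies, so the identity is a homeomorphism. You also treat the complex case (via ${\rm Ball}(X_{\rm sa})+i\,{\rm Ball}(X_{\rm sa})$ and continuity of the involution) more explicitly than the paper's parenthetical remark does.
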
 

\begin{proof}  
Let $X =  \bR_+ K -  \bR_+ K$ in $V$. Since $K$ is  compact, topologically convex, and  locally convex, 
$X_K$ is a dual base norm space by Lemma \ref{r} (which may be identified with $A(K)^*$).  Thus $X$ (resp.\ $X + iX$) is a base  norm space with base $K$, since $i_K : X_K \to X$ 
(resp.\ $i_K : (X_K)_c \to X + iX$)  is an isomorphism by Lemma \ref{a} (3), 
and $i_K$  is a homeomorphism between the copies of  $K$.   In the $*$-vector space case, $i_K$ is selfadjoint, and $X = i_K(X_K)_{\rm sa}$ is a real dual base norm space.  
On bounded sets of $X$  the weak* topology agrees with $\tau$ by the usual converging subnet argument, using that a net  converges to $x$ if and only if every subnet has a converging subnet with limit $x$. 
Indeed suppose that a bounded net $(c_t k_t  - d_t k_t')$ converges to $x$ in $X$ 
in one of these topologies.  Here $c_t, d_t \geq 0, k_t, k_t' \in K$.  
Consider a subnet, which for convenience we continue to write  as $(c_t k_t  - d_t k_t')$. We can 
assume that $(c_t)$ and $(d_t)$ are bounded nonnegative scalars, 
and have subnets with limits $c, d$. We can also assume that $k_{t_\mu} \to k, k'_{t_\mu} \to k'$ in the topology 
of $K$, hence in $\tau$ and in the weak* topology. 
Thus $c_{t_\mu} k_{t_\mu}  - d_{t_\mu} k_{t_\mu}' \to c k - d k'$ in both topologies.  So $x = c k - d k'$.  
Thus the isomorphism $i_K$ is bicontinuous, indeed is a homeomorphism, on base norm bounded sets.  
 (In the complex case $X + i X = i_K(X_K)$,  and the relationship between the weak* topology 
on the homeomorphic space $X_K$ to the weak* topology on its selfadjoint part is easy to understand 
\cite[Section 3]{BH}.) That $i_K$   is continuous as a map into $V$,  follows from Lemma \ref{ab} (4). 
 \end{proof} 
 
{\bf Remark.} The `continuously' in the statement of the last Corollorary is not necessarily `bicontinuously', as can be seen by considering a topology on $B(H)^*$ say which agrees on $K = {\rm Ball}(B(H)^*)$ with the weak* topology. 

\bigskip 

In some sense Corollary \ref{breg}  improves on, or refines, a `classical regularity result' from \cite{Breg}.   
It can be stated as saying the following: 

\begin{cor} \label{breg2}  If $(V,\tau)$ is a real (resp.\ complex) Hausdorff LCTVS  (resp.\ $*$-LCTVS), and if $K$ is a compact convex set 
which spans $V$ and lies in a hyperplane in $V$ (resp.\ $V_{\rm sa}$) not passing through 0 (that is, $K$ is preregularly embedded in $V$ in the language of {\rm \cite{Breg}}) 
then $V$ (with a possibly finer topology) is a dual base norm space with base $K$.  The  weak* topology of this dual base norm space agrees with 
$\tau$ on base-norm bounded sets.  Thus $A(K)^* \cong (V,\tau)$ via a continuous  isomorphism and base norm space isomorphism which 
is a homeomorphism on base-norm bounded sets.  \end{cor}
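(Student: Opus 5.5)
The plan is to derive Corollary \ref{breg2} directly from Corollary \ref{breg}, with the spanning hypothesis used to upgrade the inclusion $X \subseteq V$ to equality. First, Corollary \ref{breg} supplies a dual base norm space $X$ with base $K$, sitting continuously inside $V$ via the map $i_K : X_K \to V$ (or $(X_K)_c \to V$ in the complex case). Its weak* topology agrees with $\tau$ on base-norm bounded subsets of $X$. In the real case $X = \bR_+ K - \bR_+ K$. Since $K$ spans $V$ and lies in a hyperplane $\{f = 1\}$ not through $0$, every element of $V$ is a finite linear combination $\sum_i a_i k_i$ with $k_i \in K$. Separating positive and negative coefficients writes it as $ck - dk'$, using convexity of $K$ (normalize each group by the sum of its coefficients). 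Hence $V = X$. By Lemma \ref{a} (3) the map $i_K$ is then a linear isomorphism of $X_K$ onto $V$.

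In the complex $*$-LCTVS case, $K \subseteq V_{\rm sa}$ spans $V$ over $\bC$. Every $v\in V$ is $\sum a_i k_i$ with $a_i \in \bC$, so $v = x + iy$ with $x = \sum ({\rm Re}\, a_i) k_i$ and $y = \sum ({\rm Im}\, a_i) k_i$ real combinations. Thus $V = X + iX$, where $X = \bR_+K - \bR_+K \subseteq V_{\rm sa}$. Lemma \ref{a} (3) again gives that the selfadjoint map $i_K$ is a bijection $(X_K)_c \to V$; injectivity uses the hyperplane condition. We then transport the base norm and the weak* topology of $X_K$ to $V$. This yields the ``possibly finer topology'' in the statement: it is finer because $i_K$ is continuous into $(V,\tau)$ by Lemma \ref{ab} (4).

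Finally, Lemma \ref{r} identifies $X_K$ (resp.\ $(X_K)_c$) with $A(K)^*$ via an isometric base morphism that is a weak* homeomorphism. Composing with $i_K$ gives the continuous base norm space isomorphism $A(K)^* \cong (V,\tau)$. The homeomorphism on base-norm bounded sets comes from the corresponding statement in Corollary \ref{breg}. That statement rests on the subnet argument: on a bounded set, elements have the form $c_t k_t - d_t k'_t$ with bounded coefficients, and compactness of $K$ in $\tau$ lets us pass to convergent subnets with the same limit in both topologies.

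The only step needing care is the complex bookkeeping, namely checking that $X + iX$ really is the dual Taylor complexification image. This is handled exactly as in the proof of Corollary \ref{breg} and \cite[Section 3]{BH}, so no genuine obstacle is expected.
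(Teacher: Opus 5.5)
Your overall route matches the paper's. Corollary \ref{breg} together with the spanning hypothesis makes $i_K$ surjective, Lemma \ref{a} (3) gives injectivity, and the identification with $A(K)^*$ comes from Lemma \ref{r}; the paper instead points to Theorem 4.13 of \cite{BH}, which amounts to the same. Those parts are fine, as is the agreement of the topologies on base-norm bounded sets.

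\textbf{The step that fails.} You identify the ``possibly finer topology'' with the transported weak* topology, and so claim that the composite $A(K)^*\to(V,\tau)$ is weak*-to-$\tau$ continuous. Lemma \ref{ab} (4) does not give this.
\begin{itemize}
\item That lemma concerns the canonical topology of Lemma \ref{p}, namely the quotient topology from $\cC\times\cC$ for the Lawson cone $\cC$.
\item Since $\cC\times\cC$ is exhausted by the compact sets $[0,n]K\times[0,n]K$, that quotient topology is the finest topology agreeing with the weak* topology on bounded sets. It is in general strictly finer than the weak* topology, so continuity for it does not transfer to weak*.
\end{itemize}

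\textbf{The claim can actually fail.} Let $C=\{x\in\ell^2 : |x_n|\le 1/n \mbox{ for all } n\}$ and $K=\{1\}\times C$. Let $V=\bR\oplus{\rm span}(C)$ with the relative norm topology of $\bR\oplus\ell^2$. Then $K$ is compact and convex, spans $V$, and lies in the hyperplane $\{(a,x) : a=1\}$. Yet every weak* neighborhood of $0$ in $V\cong A(K)^*$ contains a nonzero finite-codimensional subspace, and such a subspace cannot lie in a norm ball. Hence $(V,w^*)\to(V,\tau)$ is not continuous.

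\textbf{How to fix it.} Transport a topology that is genuinely finer than $\tau$, and read ``continuous'' in the last sentence of the statement with respect to it. Two choices work:
\begin{itemize}
\item the base norm topology, which is finer than $\tau$ because $K$ is $\tau$-bounded and the base-norm ball lies in a multiple of the absolutely convex hull of $K$;
\item Lawson's quotient topology, via Lemma \ref{ab} (4).
\end{itemize}
The weak* topology then enters only through its agreement with $\tau$ on bounded sets. With that correction your argument goes through.
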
 

\begin{proof} As in the proof of Corollary \ref{breg}  we get a continuous isomorphism $i_K  : X_K \to V$, which in this case is clearly surjective, and we get the other stated consequences.  The isomorphism with $A(K)^*$ follows from the theory of 
base norm spaces (see the lines above \cite[Theorem 4.13]{BH}, and the proof of that result).
 \end{proof}

\begin{cor} \label{breg25}  If $(V,\tau)$ is a real (resp.\ complex) ordered Hausdorff LCTVS  (resp.\ $*$-LCTVS) with generating cone $V_+$. 
Suppose that either  $V_+$ has a $\tau$-compact base $K$, or that $V_+$ is locally compact. 
Then $(V,V_+)$ (with a possibly finer topology) 
is a dual base norm space with compact base $K$ for $V_+$, and all the other conclusions
of Corollary {\rm  \ref{breg2}} hold.  \end{cor}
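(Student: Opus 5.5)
The plan is to reduce both cases to Corollary \ref{breg2}. To do so we must exhibit a compact convex base $K$ of $V_+$ which spans $V$ and lies in a (real) hyperplane in $V$ (resp.\ $V_{\rm sa}$) not passing through $0$.

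\emph{Case 1: $V_+$ has a $\tau$-compact base $K$.} Since $V_+$ is a convex cone and $K$ is a base, $K$ is convex. The base function $\varphi$ is defined on $V_+$ by $\varphi(ck)=c$. It is additive and positively homogeneous on $V_+$, because $ck+dk' = (c+d)\bigl(\tfrac{c}{c+d}k+\tfrac{d}{c+d}k'\bigr)$ and the middle term lies in $K$. Hence $\varphi$ extends uniquely to a real linear functional on $V_+-V_+$. In the real case $V_+ - V_+ = V$ because the cone is generating. In the complex case we use $V_{\rm sa}=V_+-V_+$ and then complexify. Thus $K\subseteq\{\varphi=1\}$, which is a hyperplane not containing $0$. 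Since $V_+$ spans $V$ and $V_+=\bR_+K$, the set $K$ spans $V$. Corollary \ref{breg2} now applies verbatim and gives every conclusion. Note that $\varphi$ need not be $\tau$-continuous, and Corollary \ref{breg2} does not require this.

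\emph{Case 2: $V_+$ is locally compact.} We invoke Remark 2) after Corollary \ref{q}, or \cite[Section 3.12]{Jam}. A locally compact cone in a Hausdorff LCTVS, being locally convex since it sits inside an LCTVS, is closed and has a compact base $K$ with continuous base function. For this we need the cone to be proper, that is $V_+\cap(-V_+)=\{0\}$, which holds by the standing convention that ordered spaces have proper cones. Concretely, we take a compact convex neighbourhood $W$ of $0$ in $V_+$ and separate $0$ from the compact convex set $\overline{\rm co}$ of the relative boundary points of $W$. This uses properness and Hahn--Banach in $V$ and yields a continuous functional $f$ with $f>0$ on $V_+\setminus\{0\}$. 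We then set $K=\{x\in V_+: f(x)=1\}$. Compactness of $K$ follows because each ray meets $W$ and $K$ is a closed subset of a scaled copy of $W$; this is the standard argument. Having obtained $K$, we are back in Case 1, and this case is the main technical point. In the complex case we run the argument in $V_{\rm sa}$, which is a real LCTVS with cone $V_+$, and take $f$ to be real valued on $V_{\rm sa}$.

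Finally, the conclusion ``dual base norm space with compact base $K$ for $V_+$'' is exactly the output of Corollary \ref{breg2}. The identification $V_+=\bR_+K$ persists under the isomorphism $i_K$, so the positive cone of the dual base norm space is the original $V_+$.
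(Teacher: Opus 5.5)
Your proposal is correct and follows the paper's proof. In the first case the base function extends linearly to $V$ (resp.\ $V_{\rm sa}$) and defines a hyperplane not through $0$, so Corollary \ref{breg2} applies. The locally compact case reduces to the first via the standard fact that a locally compact proper cone has a compact base; the paper cites Phelps, p.~87, while you cite Jameson and sketch the separation argument.

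The only slip is your justification that $K$ is convex: it does not follow from $V_+$ being a convex cone, since a quarter circle meets each ray of the positive quadrant of $\bR^2$ exactly once. Convexity is part of the definition of a base, and that is what your additivity argument for $\varphi$ actually uses.
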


\begin{proof}  In the first case, the base function for $K$ extends to $V$ and defines a  hyperplane not passing through 0.    
So all of the conditions of 
Corollary  \ref{breg2} hold.  The second case follows from the first, since it is known that a  locally compact proper cone has a compact base (see  \cite[p.\ 87]{Phelps}).  
 \end{proof} 

{\bf Remarks.} 1)\ Thus one may in a sense  {\em define} a dual base norm space to be an 
LCTVS with a compact convex subset as in Corollary  \ref{breg2}, or as in Corollary  \ref{breg25}. 
Thus e.g.\ dual base norm spaces `are' just the ordered LCTVS's with a locally compact generating cone.

\smallskip 

2)\ In $\bR^n$ it is known that a closed cone is proper if and only if it has a base.  Indeed in  this case the  closed cone is obviously
locally compact so by the fact in the last proof it has a compact base.   Also it is well known that in this setting any base for the cone is compact. 

\smallskip 

3)\ Generalizing Remark 2), we saw in Theorem \ref{llm} that proper locally compact cones in an LCTVS,  are up to affine homeomorphism the same as the abstract  locally compact locally convex cones  (Here `locally compact cone' assumes that the
 cone operations are continuous.)   
 
  \begin{cor} \label{s}  Let $(X,\leq,K)$ be a real or complex base norm space such  that the base  $K$ is endowed with a Hausdorff topology with respect to which it is topologically convex,  compact and locally convex.    Then $(X,\leq,K)$ is a dual base norm space, and  with respect to this duality $K$ is a weak* compact convex set.   \end{cor}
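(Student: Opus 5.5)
The plan is to reduce everything to Lemma \ref{r} and the uniqueness statement Lemma \ref{bne}. Since $K$ is a compact, topologically convex, locally convex set, Lemma \ref{r} shows that $X_K$ (in the complex case, $(X_K)_c$) is a dual base norm space with base $K$. More precisely, $X_K \cong A(K,\bR)^*$ (resp.\ $(X_K)_c \cong A(K,\bC)^*$) via an isometric base morphism $\Phi$, and $\Phi$ restricts on $K$ to the canonical affine homeomorphism $\epsilon : K \to S(A(K))$ onto the weak* compact state space. So the abstract set $K$, with its given topology, is realized as the weak* compact base of a dual base norm space.

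The next step is to transport this to the given $(X,\leq,K)$. The identity map $K \to K$ is an affine isomorphism from the base of $X$ onto the base of $X_K$. Lemma \ref{bne} (or directly Lemma \ref{a} (3) together with Lemma \ref{ab} (1)--(3)) then gives an isometric base norm space isomorphism $\theta : X_K \to X$ restricting to the identity on $K$. Concretely, $\theta$ sends $ck - dk'$ to $ck - dk'$. It is well defined and one-to-one because $K$ lies in the hyperplane of $X$ where the base function is $1$. It is onto because $X = \bR_+K - \bR_+K$, which holds since $K$ is a base for $X_+$ spanning $X$ (in the complex case, by working with $X_{\rm sa}$ and complexifying). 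It is isometric because the norms on both sides are the base norms, that is, Minkowski functionals of ${\rm co}(K \cup -K)$, up to the closure convention; in the complex case both norms are the dual Taylor norm.

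Composing, $\Phi \circ \theta^{-1} : X \to A(K)^*$ is an isometric order isomorphism taking $K$ onto $S(A(K))$. We therefore give $X$ the predual $A(K)$, acting via this map. With respect to this duality $X$ is a dual base norm space, and $K$ is the image under $\epsilon^{-1}$ of the weak* compact convex set $S(A(K))$. Moreover, because $\epsilon$ is a homeomorphism, the weak* topology on $K$ coincides with the original topology.

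The main obstacle is the middle step: checking that the given norm on $X$ really coincides with the base norm built from $K$, so that $\theta$ is isometric rather than merely a linear order isomorphism. A subtle point is that in a (non-dual) base norm space the unit ball is only sandwiched as ${\rm Ball}(X) \subseteq t\,{\rm co}(K \cup -K)$ for $t>1$. I would handle this by quoting Lemma \ref{bne}, which already asserts the isometry for any two base norm spaces with affinely isomorphic bases. Failing that, I would argue directly that both norms equal $\inf\{c+d : x = ck - dk'\}$, using that $K$ is norm closed. I would invoke the definition from \cite{BH} and its dual Taylor complexification for the complex case.
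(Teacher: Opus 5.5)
Your proposal is correct and follows the paper's proof. Lemma \ref{r} realizes $K$ as the weak* compact state space of $A(K)$, so $A(K)^*$ is a dual base norm space with base $K$, and Lemma \ref{bne} then identifies $X$ isometrically with $A(K)^*$ as base norm spaces. The only difference is that you route through $X_K$ via $\Phi \circ \theta^{-1}$, whereas the paper applies Lemma \ref{bne} directly to $X$ and $A(K)^*$; the detour is harmless but not needed.
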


\begin{proof}  By Lemma \ref{r}   $K$ is affinely homeomorphic to a compact convex set in a Hausdorff LCTVS.  Thus 
$A(K)^*$ is a base norm space $X$ with compact base $K$.  Thus by Lemma \ref{bne}, $X \cong A(K)^*$ is a dual space  and this 
isomorphism is isometric and as base norm spaces.  So $X$ is a dual base norm space and with respect to this duality $K$ is weak* compact.  \end{proof} 

\begin{prop} \label{u}  There exists a base norm space $X$ with convex base $K$ which is compact in a 
metrizable topology on $X$ which is coarser than the norm topology, but $X$ is not a dual base norm space and $K$ is not compact with respect to any  LCTVS topology on $X$.   \end{prop}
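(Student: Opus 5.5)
The plan is to build the example from Roberts' compact convex set without extreme points, via the universal base norm space of Lemma \ref{p}. The negative assertions will then follow from the Krein--Milman theorem. The key point is that being an extreme point is a purely algebraic notion, so it does not depend on which topology is placed on $X$.

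First, the construction. By Roberts' theorem (see \cite{Rob}) there is a compact convex set $K_0$ in a metrizable Hausdorff TVS $(W,d)$, for instance $L^p[0,1]$ with $0<p<1$, such that $K_0$ has no extreme points. Using the hyperplane trick from before Lemma \ref{a}, replace $K_0$ by $K = K_0 \times \{1\}$ in $V = W \oplus \bR$. This is again a metrizable TVS, $K$ is affinely homeomorphic to $K_0$, and $K$ still has no extreme points. Moreover $K$ lies in a hyperplane not passing through $0$. Since $K$ is compact and topologically convex, Lemma \ref{p} shows that $X_K$ is a real base norm space with norm-closed base $K$.

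By Lemma \ref{a} (3) and Lemma \ref{ab} (4), the canonical map $X_K \to X := \bR_+K - \bR_+K \subset V$ is a linear isomorphism that restricts to the identity on $K$. So we may take $X \subset V$, carrying the base norm with base $K$, and give $X$ the relative metrizable topology from $V$. In this topology $K$ is compact.

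Next I will check that this topology is coarser than the norm topology, which amounts to continuity at $0$ by linearity. Suppose $\|x_n\|_K \to 0$. Then $x_n = c_n k_n - d_n k_n'$ with $k_n,k_n' \in K$ and $c_n,d_n \ge 0$, $c_n + d_n \to 0$. Since $K$ is compact, it is bounded in the TVS $V$. Hence $c_n k_n \to 0$ and $d_n k_n' \to 0$ in $V$, because scalars tending to $0$ times a bounded set tend to $0$. Thus $x_n \to 0$ in $V$. Since the topology is metrizable, sequences suffice.

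Finally, the negative claims. Let $\sigma$ be any Hausdorff locally convex vector topology on $X$ in which $K$ is compact. Then $K$ is a nonempty compact convex subset of the Hausdorff LCTVS $(X,\sigma)$. By the Krein--Milman theorem, $K$ has an extreme point, namely an extreme point for the convex structure of $K$ inside $X$. But the convex structure of $K$ is the same whatever topology is put on $X$, and $K$ has no extreme points, a contradiction. So $K$ is not compact in any LCTVS topology on $X$.

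If $X$ were a dual base norm space, then by definition $K$ would be weak* closed in $\mathrm{Ball}(X)$, hence weak* compact. The weak* topology is a Hausdorff locally convex vector topology on $X$, so this contradicts what we just proved. Equivalently, one could invoke Lemma \ref{r} and Corollary \ref{s}.

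The main obstacle is quoting Roberts' example in exactly the needed form, namely a compact convex set with no extreme points in a metrizable TVS; the remaining steps are routine. Should only the weaker form be available, that $A(K_0)$ fails to separate points, I would argue instead as follows. A compact convex set in an LCTVS $(X,\sigma)$ has $\sigma$-continuous functionals, affine on $K$, that separate the points of $K$. I would then need to transfer this to continuity in the original topology on $K$. That transfer is not automatic, which is why the extreme-point version is the one to use.
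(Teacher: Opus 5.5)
Your proof is correct and follows the paper's route: Roberts' compact convex set without extreme points, the base norm space of Lemma \ref{p}, and the Krein--Milman theorem for both negative assertions. The differences are minor: you apply Krein--Milman directly in an arbitrary Hausdorff locally convex topology on $X$ and deduce the dual case from it via Banach--Alaoglu, whereas the paper passes through Corollary \ref{s} and $X \cong A(K)^*$; you also use the hyperplane trick and Lemma \ref{a}(3) to make explicit the metrizable topology on $X$ coarser than the norm, which the paper leaves implicit.
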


\begin{proof}  Let $K$ be Roberts' example of a compact convex set in a metrizable  TVS, with no extreme points.  By Corollary \ref{tc}  $K$ is affinely homeomorphic to the compact base of a locally convex cone $\cC$ in a Hausdorff TVS.
By Lemma \ref{p} the  Grothendieck group of $\cC$ is a real vector space $X = \cC - \cC$ with base $K$ for $V_+ = \cC$, and indeed   $X$  is a base norm space with norm closed base $K$, and with the canonical base function 
of $K$ in $X$.   If $X$ were a dual base norm space then $X \cong A(K)^*$ is a dual space, so that $K$ has extreme points by the Krein-Milman theorem.  If $K$ was compact with respect to an LCTVS topology on $X$
then by Corollary  \ref{s} $X$ is a dual space, again giving a contradiction. Alternatively,  if $K$ were locally convex or if the dual of  $X$ in a LCTVS topology
supplied enough separating functions in $A(K)$ then $A(K)^*$ is a base norm space $X$ with base $K$.  Thus $X \cong A(K)^*$ is a dual space by Lemma \ref{bne}, again giving a contradiction. 
\end{proof}

\begin{cor} \label{ws}   A compact topologically convex set is affinely homeomorphic to a weak* compact
convex set in a dual Banach space  if and only if it is locally convex.   \end{cor}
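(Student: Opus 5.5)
The corollary has two directions, and I would handle them separately.

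\emph{Sufficiency.} Suppose $K$ is compact, topologically convex and locally convex. By Lemma \ref{r}, $A(K,\bR)$ separates the points of $K$. Moreover, the canonical evaluation map $\epsilon : K \to A(K,\bR)^*$ is one-to-one, continuous and affine, where the target carries the weak* topology. Since $K$ is compact, $\epsilon$ is an affine homeomorphism onto its image $\epsilon(K)$. This image is weak* compact and convex, and indeed equals the state space $S(A(K))$. Finally $A(K,\bR)$ is a Banach space, being closed in $C(K,\bR)$ under the sup norm, so $A(K,\bR)^*$ is a dual Banach space as required. The complex case goes the same way with $A(K,\bC)$, or follows from the real case.

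\emph{Necessity.} Suppose $\phi : K \to L$ is an affine homeomorphism onto a weak* compact convex set $L$ in a dual Banach space $E^*$. The weak* topology on $E^*$ is a Hausdorff locally convex topology. Its basic neighborhoods of a point are intersections of finitely many sets $\{ y : |\langle y - y_0, x_i\rangle| < \epsilon \}$, and these are convex. Hence each point of $L$ has a basis of relatively open convex neighborhoods, since the intersection of a convex set with $L$ is convex. Pulling back through $\phi$, which is both affine and a homeomorphism, preimages of convex subsets of $L$ are convex in $K$. So $K$ has a basis of convex neighborhoods, that is, $K$ is locally convex. One could alternatively cite Theorem \ref{lmjot}, since $L$ sits in an LCTVS, but the direct argument is elementary.

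The only point that needs care is confirming that local convexity of $K$ is an intrinsic notion. It means convex with respect to the convexity operation of $K$, and the affine map $\phi$ preserves exactly this structure, so the pullback argument is legitimate. I do not expect a genuine obstacle: the substantive content is already contained in Lemma \ref{r}, and in particular its identification $X_K \cong A(K)^*$ gives an even more concrete dual Banach space containing $K$ as a weak* compact base.
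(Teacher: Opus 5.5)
Your proof is correct and follows essentially the paper's route: necessity because weak* basic neighborhoods are convex, and sufficiency via Lemma \ref{r} and the canonical weak* embedding $K \to A(K)^*$ (the paper reaches this through Lemma \ref{r} and the proof of Corollary \ref{s}). One small slip in an aside: Theorem \ref{lmjot} gives the other implication, not necessity, so the direct pullback argument you actually give is the right one.
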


\begin{proof}  The one direction is clear since the weak* topology is locally convex.
For the other,   $K$ is affinely homeomorphic to a compact convex set in a Hausdorff LCTVS $E$ by Lemma \ref{r}, and $E$ may be taken to be 
$A(K)^*$ and $K$  weak* compact by e.g.\ the proof of Corollary \ref{s}.     \end{proof}

We give a nice application to the weak* topology on a real dual base norm space $E$.  The following result applies in particular to the selfadjoint part of the dual of a von Neumann algebra or the dual of an operator system $\cS$, or on $C(K)^*$ for compact $K$ (or more generally to the selfadjoint part of any complex dual base norm space).   
Namely, it shows how to retrieve the weak* topology on $E$ 
from the weak* topology on $E_+$, or from the weak* topology on $K$
(by employing the Lawson cone construction from $K$).  Of course once one has the  weak* topology on
the selfadjoint part of the dual of a complex 
 operator system $\cS$, one has it on all of $\cS^*$ (by considering the product topology).

\begin{cor} \label{w}  The weak* topology on a real dual base norm space $E$, and in particular the weak* topology 
on the selfadjoint part of the dual of an operator system, 
 is the quotient topology on $E$ induced by the subtraction map $E_+ \times E_+ \to E$.
Here $E_+$ is given the topology of the Lawson cone of the base $K$ (which coincides with the relative weak* topology). 
 \end{cor}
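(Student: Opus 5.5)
Let $K$ be the weak* compact base of $E$ and $\varphi$ the weak* continuous base function, so that $E_+ = \bR_+ K$ and $E = E_+ - E_+$. Let $q : E_+ \times E_+ \to E$, $q(x,y) = x - y$, and let $\tau_q$ be the quotient topology on $E$ induced by $q$, where $E_+$ carries its Lawson cone topology. The plan is to show first that the Lawson cone topology on $E_+$ equals the relative weak* topology, and then that $\tau_q$ equals the weak* topology $w^*$ on $E$.

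For the first step, Remark 2) after Corollary \ref{q} applies, with $E_+$ a locally compact cone in the LCTVS $(E,w^*)$. Directly: the map $(0,\infty)\times K \to E_+\setminus\{0\}$, $(t,k)\mapsto tk$, is weak* continuous, and its inverse $x \mapsto (\varphi(x), x/\varphi(x))$ is weak* continuous because $\varphi$ is. So $E_+\setminus \{0\} \cong (0,\infty)\times K$ for the weak* topology. At $0$, the sets $[0,\epsilon)\cdot K = \{x \in E_+ : \varphi(x) < \epsilon\}$ are relatively weak* open. Conversely, every weak* neighborhood of $0$ in $E_+$ contains such a set: otherwise we get $t_\alpha k_\alpha$ with $t_\alpha \to 0$ that stay outside the neighborhood, yet $t_\alpha k_\alpha \to 0$ weak* since $K$ is bounded. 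Hence the two topologies on $E_+$ agree.

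For the second step, $q$ is weak* continuous, since subtraction is continuous in $(E,w^*)$ and the topologies on $E_+$ agree. So $w^* \subseteq \tau_q$. For the reverse inclusion, Theorem \ref{llm} says that $(E,\tau_q)$ is a Hausdorff LCTVS, since $E_+$ is locally compact and locally convex. The identity $(E,\tau_q)\to(E,w^*)$ is therefore a continuous linear bijection, and I need it to be open. The best route is the Krein--Smulian theorem. The set ${\rm Ball}(E) = {\rm co}(K\cup -K) = q(\{(sk, tk') : s+t \le 1,\ k,k' \in K\})$ is the continuous image of a compact set, so it is $\tau_q$-compact. On it, $\tau_q$ and $w^*$ are comparable compact Hausdorff topologies, hence equal, and the same holds on every multiple $r\,{\rm Ball}(E)$.

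The main obstacle is to conclude that $\tau_q = w^*$ from this agreement on bounded sets. Take a $\tau_q$-closed set $F$. Its intersection with each $r\,{\rm Ball}(E)$ is $\tau_q$-compact, hence weak* compact, hence weak* closed. By Krein--Smulian, a convex such $F$ is weak* closed. Because of this I plan to work with convex sets. Every $\tau_q$-neighborhood of $0$ contains a convex one $U$, since $\tau_q$ is locally convex. Then the $\tau_q$-closure of $\frac12 U$ is convex, is contained in $U$, and is weak* closed. This does not immediately give a weak* neighborhood, so the cleaner route is through dual spaces. Let $f$ be a $\tau_q$-continuous linear functional. Its kernel is a $\tau_q$-closed convex set, so it is weak* closed by the above, and therefore $f$ is weak* continuous. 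Hence $(E,\tau_q)^* = (E,w^*)^* = A(K)$. Finally, I will show $\tau_q$ is the weak topology $\sigma(E,A(K))$ by using the fact that $\tau_q$ is the quotient of a topology that is determined on the compact pieces $\cC_n\times\cC_n$ (the $\sigma$-compact structure of the Lawson cone). A set $W$ is $\tau_q$-open if and only if $q^{-1}(W)\cap(\cC_n\times \cC_n)$ is open for all $n$, and on these compact pieces the topology coincides with the weak* topology. That is, $\tau_q$ is the finest topology agreeing with $w^*$ on the sets $q(\cC_n\times\cC_n)$. The Banach--Dieudonné theorem identifies this finest such topology with the bounded weak* topology, whose open sets are those meeting every bounded set in a relatively weak* open set. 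Reconciling this with the locally convex topology $w^*$ is the delicate point. If the two turn out to differ, I will read ``the quotient topology'' as the quotient in the category of LCTVSs, as in Lawson's construction used in Lemma \ref{r}. In that setting the duality argument above already identifies $\tau_q$ with $w^*$.
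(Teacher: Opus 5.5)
Everything you actually prove is correct: the Lawson cone topology on $E_+$ is the relative weak* topology, $w^*\subseteq\tau_q$, the two topologies agree on every $r\,{\rm Ball}(E)$, and Krein--Smulian gives $(E,\tau_q)^*=A(K)$. The gap is the final step, and your proposed way around it does not work. Reading ``quotient topology'' in the category of LCTVS's changes nothing. Theorem \ref{llm}, which you already invoked, says $\tau_q$ is itself locally convex, and every topology making $q$ continuous is coarser than $\tau_q$, so the locally convex quotient is $\tau_q$ again. Equality of duals also does not identify the topologies. It only says that $\tau_q$ is a topology of the dual pair $\langle E,A(K)\rangle$, i.e.\ it lies between $\sigma(E,A(K))=w^*$ and the Mackey topology.

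In fact your own Banach--Dieudonn\'e observation decides the matter against the statement. The sets $\{(y,z):\varphi(y),\varphi(z)<n\}$ are open and exhaust $E_+\times E_+$. Moreover, $q$ restricted to each compact piece $[0,n]K\times[0,n]K$ is a closed map onto a set lying between $n\,{\rm Ball}(E)$ and $2n\,{\rm Ball}(E)$. Hence $\tau_q$ is exactly the bounded weak* topology, which is strictly finer than $w^*$ whenever $E$ is infinite-dimensional.

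Here is a concrete instance. Let $\Omega=\bN\cup\{\infty\}$ and $E=C(\Omega,\bR)^*=\ell^1(\Omega)$, which is the selfadjoint part of the dual of the operator system $C(\Omega,\bC)$. Put $U=\{x\in E:|x(\{n\})|<n \mbox{ for all } n\in\bN\}$. On $E_+$ the norm is the weak* continuous base function $\varphi$. So near any $(y,z)\in q^{-1}(U)$, on the open set $\{\varphi(y')+\varphi(z')<M\}$ with $M=\varphi(y)+\varphi(z)+1$, all conditions with $n\ge M$ hold automatically. The finitely many remaining conditions are weak* open, since $\chi_{\{n\}}\in C(\Omega)$. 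Thus $U$ is a $\tau_q$-open neighbourhood of $0$. But every weak* neighbourhood of $0$ contains a finite-codimensional subspace $N$, and $N\subseteq U$ would force $N\subseteq\bR\delta_\infty$, which is impossible.

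So the corollary holds as stated only when $\dim E<\infty$. In general, what is true is that $\tau_q$ and $w^*$ agree on bounded sets (and on $E_+$), have the same dual, and have the same closed convex sets. The paper's proof passes over the same point. Corollary \ref{s} and Lemma \ref{bne} give an isometric base norm isomorphism $X\cong A(K)^*$ that is a homeomorphism for the two weak* topologies. Nothing there shows that Lawson's quotient topology on $X$ is that weak* topology, so the claim that ``both are $A(K)^*$ as LCTVS's'' is not justified.
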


\begin{proof}  If $K$ is the state space of $E$, which is weak* compact,  then $E \cong A(K)^*$.    Let $\cC = E_+$ and $X = (\cC \times \cC)/\sim$ with the quotient topology induced 
 by the subtraction map $E_+ \times E_+ \to E$.  Then $X$ is a Hausdorff  LCTVS  in which $\cC$ is embedded affine topologically by  Theorem \ref{llm}. 
 Also $X$ is a base norm space by Lemma \ref{p}.  Thus $X$ is a dual  base norm space with compact base $K$ by Corollary \ref{s}. So $E$ and $X$  are the same, indeed both are $A(K)^*$ as LCTVS's, and isometrically (see Lemma \ref{bne}).     \end{proof}

\bigskip 

{\bf Remark.} The category of complex base norm spaces is equivalent to the category of real base norm spaces.
Indeed the first half of the proof of \cite[Lemma 3.3]{BH} shows that any complex base norm space $F$ is the dual Taylor complexification of its selfadjoint part $F_{\rm sa}$.  Moreover any complex base morphism between complex base norm spaces is selfadjoint, hence restricts to 
a real base morphism between the selfadjoint parts.  Conversely a real base morphism $F_{\rm sa} \to G_{\rm sa}$
extends to a complex base morphism $F \to G$ by Lemma  \ref{ab}.

\section{Matrix convex sets and nc bases} \label{mcs}

In \cite{BH} we introduced and studied nc base norm spaces and nc dual  base norm spaces.    We refer the reader to \cite{BH} for various definitions of 
a nc base norm space, and we shall meet more equivalent definitions below.  It is a matrix ordered $*$-vector space and operator space with closed matrix cones, and  the base now is a closed 
matrix convex set  $K \subset ({\rm Ball}(M_n(X))_+)$, and every $x \in M_n(X)_+$ is of form $ckc$ for $c \in (M_n)_+$ and $k \in K_n$. 
  For every $t > 1$ and $x \in {\rm Ball}(M_n(X))_{\rm sa}$ we may write 
 $x = c_1 x_1 c_1 - c_2 x_2 c_2$ for  positive matrices $c_i$, with $\| c_1^2 + c_2^2 \| \leq t$  and $x_i \in K_n$.  This expression is a noncommutative version of 
  the classical condition mentioned at the start of Section \ref{wiaccu}  that Ball$(X)  \subseteq t \, 
{\rm co} (K \cup (-K) )$.  It also essentially defines the nc base norm on  $M_n(X)_{\rm sa}$ in terms of an infimum of such $\| c_1^2 + c_2^2 \|$ (see equation (2) in \cite{BH}).  
We then get the norm of a general $x \in M_n(X)$ by the formula $\| x \| = \| \tilde{x} \|$ where $\tilde{x}$ is the selfadjoint matrix with rows $[0 \; x]$ and $[x^* \; 0]$. 
 The nc base  function is (the unique)  selfadjoint functional $f$ on $X$ that is $1$ on $K_1$; indeed   
 $K_n = \{  x \in  M_n(X)_+ : f^{(n)}(x) = I_n \}$ for each $n$.   A nc dual base norm space  is
a nc base norm space with an operator space predual such that the nc base $K$ is 
weak* closed (and hence weak* compact).  The dual operator space of an operator system is the generic nc dual  base norm space (see below).   Equivalently, the dual  nc base norm spaces are exactly (up to appropriate isomorphism) the spaces $\bA(K)^*$ for a compact convex set $K$.  The dual  base of $\bA(K)^*$ is $\delta(K)$, where $\delta : K \to \bA(K)^*$ is the canonical map. The generic nc base norm space ``is''  the predual of a dual operator  system, with the base 
corresponding to the normal matrix state space.   A nc base morphism is a completely positive linear map $u : X \to Y$ between nc base norm spaces mapping nc base into nc base.  Again this is equivalent 
(assuming $u$ completely positive) to $f_Y \circ u = f_X$, where $f_X$ and $f_Y$ are the nc 
base functions.   Indeed any map taking nc base into nc base is completely positive, selfadjoint, and completely contractive. 

To see that the dual of an operator system  $\cS$ is a dual nc base norm system with base the nc state space $K = ({\rm UCP}(\cS, M_n))$, first observe that $K$ is the generic 
compact matrix convex set, and it is weak* compact at each level \cite{WW,DK}.  In the notation above $X = \cS^*$, so that $M_n(X) = M_n(\cS^*) \cong CB(\cS,M_n)$, with $M_n(\cS^*)_+ \cong {\rm CP}(\cS,M_n)$, the completely positive maps.
Every such  completely positive map may be written as $c u(\cdot) c$ for $c \in (M_n)_+$ and matrix state $u \in {\rm UCP}(\cS, M_n)$.  The selfadjoint part of $M_n(\cS^*)$  correspond to
the selfadjoint completely bounded maps from $\cS$ into $M_n$, which equals  CP$(\cS,M_n) - {\rm CP}(\cS,M_n)$.  
Any completely contractive map from $\cS$ into $M_n$ may be written as a 
difference of two completely positive maps whose sum has norm $\leq 1$ (this follows e.g.\ from \cite[Theorem 8.5]{Pnbook}).
In this case the nc base  function is `evaluation at'  $1 \in \cS$. 

A base norm space or nc base norm space $X$ with compact base $K$ has a canonical  TVS structure.
Indeed $X$ is isomorphic to the universal space $X_{K_1}$ or its complexification $Z$, so $X$ inherits the TVS structure 
in e.g.\ Lemma \ref{p} and the remark after it (we put the `product topology' 
 on the complexification).  This observation also shows that 
our next result will immediately give a nc analogue of Lemmas \ref{ab} and \ref{a}, 
taking $X$ to be the universal operator space $X_K$ of $K$, which we saw in Theorem \ref{xknp}   is a nc base norm space with nc base $K$ if $K$ is matrix compact.

\begin{lemma} \label{bnmn}   Suppose that $(X,K)$ and $(Y,L)$ are real or complex nc base norm spaces. 
A   matrix affine $f : K \to L$ extends uniquely to a completely contractive complex linear selfadjoint and completely positive nc base morphism $X \to Y$.   More generally a  matrix affine (resp.\ affine and nc bounded, affine and nc positive, affine and continuous) $f : K \to V_{\rm sa}$ into a real or complex $*$-vector space 
(resp.\ $*$-operator space, matrix ordered space, $*$-TVS) $V$ extends uniquely to a   linear selfadjoint (resp.\ completely bounded with 
$\| \tilde{f} \|_{\rm cb} = \| f \|_\infty$,  completely positive, continuous with respect to the 
canonical TVS structure of $X$  above (assuming $K_1$ compact)) map $\tilde{f} : X  \to V$. 
 \end{lemma}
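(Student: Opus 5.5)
The plan is to reduce everything to level 1 and then lift using matrix affinity. Since $K_1$ spans $X_{\rm sa}$ (every selfadjoint element is a difference of positive elements, and $X_+ = \bR_+ K_1$), and $K_1$ lies in the hyperplane $\{f_X = 1\}$ not passing through $0$, Lemma \ref{a} (with $X \cong (X_{K_1})_c$ via Lemma \ref{a} (3)) shows that $f_1 : K_1 \to V_{\rm sa}$ has a unique linear selfadjoint extension $\tilde{f} : X \to V$. Uniqueness of the extension of $f$ is then automatic, since $\tilde f$ is already determined by $f_1$ on $K_1$, which spans $X$. The key step is to show that $\tilde{f}^{(n)}$ agrees with $f_n$ on $K_n$. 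For $k \in K_n$ and a unit vector $\xi \in \bC^n$, matrix affinity gives $\xi^* f_n(k) \xi = f_1(\xi^* k \xi) = \tilde f(\xi^* k \xi) = \xi^* \tilde f^{(n)}(k) \xi$. Polarization, as in (\ref{pn}) and the Remark after Proposition \ref{isaf}, then gives $\tilde f^{(n)}(k) = f_n(k)$.

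The positivity and base morphism claims follow from this. Every $x \in M_n(X)_+$ has the form $c k c$ with $c \in (M_n)_+$ and $k \in K_n$, so $\tilde f^{(n)}(x) = c f_n(k) c$. This lies in $M_n(V)_+$ when $f$ is nc positive, and lies in the matrix cone of $Y$ when $f$ maps into $L$; hence $\tilde f$ is completely positive. In the case $f : K \to L$ we also get $f_Y \circ \tilde f = f_X$ on $K_1$, hence on $X$, so $\tilde f$ is a nc base morphism. Complete contractivity then follows from the fact, recalled at the start of this section, that maps taking nc base into nc base are completely contractive.

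For the completely bounded estimate, take $x \in {\rm Ball}(M_n(X))_{\rm sa}$ and $t > 1$, and write $x = c_1 x_1 c_1 - c_2 x_2 c_2$ with $\| c_1^2 + c_2^2 \| \le t$ and $x_i \in K_n$. Then
$$\tilde f^{(n)}(x) = c_1 f_n(x_1) c_1 - c_2 f_n(x_2) c_2 = \begin{bmatrix} c_1 & c_2 \end{bmatrix} \begin{bmatrix} f_n(x_1) & 0 \\ 0 & -f_n(x_2) \end{bmatrix} \begin{bmatrix} c_1 \\ c_2 \end{bmatrix},$$
so by the operator space axioms $\| \tilde f^{(n)}(x) \| \le t \| f \|_\infty$. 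For nonselfadjoint $x$ we pass to $\tilde x$, using that $\tilde f^{(2n)}(\tilde x)$ is the corresponding off-diagonal matrix, since $\tilde f$ is selfadjoint. This gives $\| \tilde f \|_{\rm cb} \le \| f \|_\infty$. The reverse inequality is clear because $K_n \subset {\rm Ball}(M_n(X))$.

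Continuity with respect to the canonical TVS structure is Lemma \ref{ab} (4) applied to $f_1$, using that $X$ carries the topology transported from $(X_{K_1})_c$. The main obstacle I expect is the bookkeeping in the real versus complex case, together with identifying $X$ with $(X_{K_1})_c$. In the complex case I would use selfadjointness of $V$-valued $f_1$ and Lemma \ref{a} (2)–(3); in the real case I would use $X = X_{\rm sa}$ directly, with polarization replaced by the real version via symmetric entries.
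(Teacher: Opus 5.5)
In the complex case your argument is correct and is essentially the paper's proof. Both use the level-one extension from Lemma \ref{a}/\ref{ab}, the estimate via $[c_1 \; c_2]\,(f_n(x_1) \oplus (-f_n(x_2)))\,[c_1 \; c_2]^{\tran}$, the passage to $\tilde x$ for nonselfadjoint $x$, and Lemma \ref{ab}(4) for continuity. Your explicit polarization check that $\tilde f^{(n)} = f_n$ on $K_n$ supplies a step the paper uses tacitly when it writes $\tilde f^{(n)}(c^{1/2} y c^{1/2}) = c^{1/2} f(y) c^{1/2}$.

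Your closing remark on the real case, however, would not work as stated.

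\begin{itemize}
\item In the real setting the involution need not be trivial. For example, take $X = M_2(\bR)^*$, the dual of the real operator system $M_2(\bR)$ with transpose. Then $X \neq X_{\rm sa}$, and $K_1 \subset X_{\rm sa}$ spans only $X_{\rm sa}$. So $f_1$ alone does not determine $\tilde f$ on $X$, and your uniqueness argument breaks.
\item Real compressions $\xi^{\tran} A \xi$ with $\xi \in \bR^n$ only detect $A_{ii}$ and $A_{ij} + A_{ji}$. This does not determine $A \in M_n(X)_{\rm sa}$: for $b^* = -b \neq 0$, the selfadjoint matrix $\begin{bmatrix} 0 & b \\ -b & 0 \end{bmatrix}$ has all real level-one compressions equal to zero.
\end{itemize}

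So in the real case, both uniqueness and the identification $\tilde f^{(n)} = f_n$ have to be driven by level $2$, not by $K_1$. The point is that every $x \in X$ is an entry of $\tilde x \in M_2(X)_{\rm sa}$, and $M_2(X)_{\rm sa}$ is spanned by $C_2 = \{ c k c : k \in K_2 \}$.

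The paper itself only writes out the complex case (``the real being similar'') and defers the real nc theory to a sequel. So this problem affects your side remark, not the main line of your proof.
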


\begin{proof}   We just do the complex case, the real being similar.  By the classical case in Lemma \ref{ab} (2) there is a unique complex linear selfadjoint extension $\tilde{f} : X  \to V$, and it is one-to-one  if $f$ is  one-to-one and maps into a hyperplane not passing through 0. Suppose that $f$ is nc bounded by a constant $D$ (that is, uniformly bounded at all matrix levels). 
If $x = x^*$ is written as is usual in \cite{BH} as $c^{\frac{1}{2}} y c^{\frac{1}{2}} - d^{\frac{1}{2}} z d^{\frac{1}{2}}$ then 
$$\| \tilde{f}^{(n)}(x) \| = \| c^{\frac{1}{2}} f(y) c^{\frac{1}{2}} - d^{\frac{1}{2}} f(z) d^{\frac{1}{2}} \| 
= \| [ c^{\frac{1}{2}}  \; d^{\frac{1}{2}}  ] (f(y) \oplus (-f(z)))  [  c^{\frac{1}{2}}  \; d^{\frac{1}{2}} ]^\tran \| \le D \| c+d \|.$$ 
 by the considerations in the proofs of \cite[Proposition 3.2]{Breg}
 and \cite[Lemma 4.3]{BH}. 
 It follows that $\tilde{f}$ is `completely bounded on selfadjoint matrices', with constant $\leq D$.  Since $\tilde{f}$ is 
 selfadjoint we have for any $x \in M_n(X)$ that 
$$\| \tilde{f}^{(n)}(x) \| = \| \tilde{f}^{(2n)}(\tilde{x}) \|  \leq D \| \tilde{x} \| = D \| x \|.$$  So  $\tilde{f}$ is completely bounded
with $\| \tilde{f} \|_{\rm cb} \leq \| f \|_\infty$.   The converse inequality holds since $\tilde{f} = I_n$ on $K_n$, and
elements of $K_n$ have nc base norm 1.
If $V = Y$ then   $\tilde{f}$ is a nc base morphism as in 
the earlier real and complex variants.    
  We leave the rest to the reader (e.g.\ the `continuous TVS case' follows from Lemma \ref{ab}(4)).  \end{proof}

The following shows that a nc base norm space is completely determined by its nc base: 
 
\begin{lemma} \label{bnen}  Suppose that $(X_i,K_i)$ are  real or complex nc base norm spaces and that $K_1$ is matrix affinely isomorphic 
to $K_2$.  
Then 
$X_1 \cong X_2$ completely  isometrically as base norm spaces.  Moreover this isomorphism is also a 
homeomorphism (resp.\ weak* homeomorphism)  if $K_i$ are matrix compact 
(resp.\ if $(X_i,K_i)$ are nc dual base norm spaces). \end{lemma}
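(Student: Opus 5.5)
Let $\theta : K_1 \to K_2$ denote the given matrix affine isomorphism (we use $K_1, K_2$ for the two nc bases, not levels). The plan is to mirror the proof of Lemma \ref{bne}, with Lemma \ref{bnmn} in place of Lemma \ref{ab}. Applying Lemma \ref{bnmn} with $(X,K) = (X_1,K_1)$, $(Y,L) = (X_2,K_2)$ and $f = \theta$ gives a unique completely contractive, selfadjoint, completely positive nc base morphism $\tilde{\theta} : X_1 \to X_2$ extending $\theta$. Likewise $\theta^{-1}$ extends to a completely contractive nc base morphism $\widetilde{\theta^{-1}} : X_2 \to X_1$.

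Next we check that these extensions are mutually inverse. The composition $\widetilde{\theta^{-1}} \circ \tilde{\theta}$ is a linear selfadjoint map extending the identity on the base. Since the base spans $X_1$ (indeed $X_1 = \bR_+ K_{1,1} - \bR_+ K_{1,1}$ plus its complexification, the first level of the base spanning), uniqueness in Lemma \ref{bnmn} (or in Lemma \ref{a}) forces it to be the identity; similarly in the other order. Hence $\tilde{\theta}$ is a bijection, and since it and its inverse are both completely contractive it is a complete isometry. Both maps are completely positive and carry base onto base, so $\tilde{\theta}$ is a complete order isomorphism and an isomorphism of nc base norm spaces.

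For the topological assertions, first suppose the $K_i$ are matrix compact, and read the hypothesis as saying that $\theta$ is (nc) continuous, as in Lemma \ref{bne}. By Proposition \ref{isc}, $\theta$ and $\theta^{-1}$ are continuous at level 1, and the extensions are continuous for the canonical TVS topologies described before Lemma \ref{bnmn} (by Lemma \ref{ab}(4), or the continuous case of Lemma \ref{bnmn}). Hence $\tilde{\theta}$ is a homeomorphism.

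In the dual case, invoke the duality of \cite{WW} between operator systems and compact matrix convex sets. The continuous matrix affine isomorphism gives $\bA(K_1) \cong \bA(K_2)$ as operator systems. The key step is identifying $X_i \cong \bA(K_i)^*$ weak* homeomorphically for nc dual base norm spaces. We expect this to follow from \cite{BH} or from the last assertion of Theorem \ref{xknp} combined with the first part of this lemma applied to $X_i$ and $\bA(K_i)^*$. Composing then yields a weak* homeomorphic complete isometry, which by uniqueness is $\tilde{\theta}$.

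The main obstacle is this last identification. It requires knowing that the weak* topology of an nc dual base norm space is determined by the base; this should be the nc analogue of the classical fact $X = A(K)^*$ quoted in Lemma \ref{bne}.
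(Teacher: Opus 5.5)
Your proposal is correct and follows essentially the paper's proof. The complete isometry, the order and base isomorphism, and the first homeomorphism statement come from Lemma \ref{bnmn}: extend $\theta$ and $\theta^{-1}$, then use uniqueness. The dual case passes through the \cite{WW} duality $\bA(K_1)\cong\bA(K_2)$.

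At the step you flag as the obstacle, the paper simply quotes $X_i = \bA(K_i)^*$ (as nc dual base norm spaces) from \cite{BH}, so that citation is the right one. Your alternative, Theorem \ref{xknp} combined with the first part of the lemma, would not by itself identify the given weak* topology on $X_i$. That identification is precisely the dual-case assertion being proved, so this route is circular.
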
 

\begin{proof}   We just do the complex case, the real being similar.   The first assertions and the first homeomorphism result follow from Lemma \ref{bnmn}.   In the dual base norm case, if $K_1$ is matrix affinely homeomorphic to $K_2$ then by the duality of compact matrix convex sets and  operator  
systems \cite{WW}, 
$\bA(K_1) \cong \bA(K_2)$ as operator systems.  Since $X_i = \bA(K_i)^*$, we have $X_1 \cong X_2$ completely isometrically as dual base norm spaces. \end{proof}

\begin{theorem} \label{isnbns} Suppose that $K = (K_n)$ is a (complex) matrix convex set in a complex $*$-vector space $V$, with $K_n  \subset M_n(V)_{\rm sa}$ for each $n$. 
Suppose also that $K$ has a topology with respect to which it is an abstract matrix compactly  convex set. Define $$C_n = \{ \gamma^* k \gamma : k \in K_m , \gamma  \in M_{m,n} : m \geq n \} \subset M_n(V)_{\rm sa},$$ 
set $\cC = (C_n)$, 
and let $X$ be the complex 
span of $K_1$ in $V$.   Assume further that $(X_{\rm sa}, C_1)$ is a base norm space with base $K_1$, 
or equivalently 
(by  the lines after Lemma {\rm  \ref{ab}}), 
merely that $(X_{\rm sa},C_1)$ has linear base $K_1$. 
Then $(X,\cC,K)$ is a  nc base norm space whose canonical matrix norms make $X$ an operator space and a nc base norm space with nc base $K$. 

If in addition $K_1$ is locally convex then  
$X$ is a dual operator space and a nc dual base norm space with nc dual base $K$. 
 \end{theorem}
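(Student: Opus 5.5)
Since $X_{\rm sa}$ has linear base $K_1$, $K_1$ lies in the real hyperplane $\{f=1\}$ of $V_{\rm sa}$, where $f$ is the base function. By Lemma \ref{a} (3) and the discussion after Lemma \ref{ab}, the canonical map $i : (X_{K_1})_c = Z \to X$ is then a selfadjoint linear isomorphism extending the identity on $K_1$. Moreover $X_{\rm sa}$ is the real base norm space $X_{K_1}$ with closed base $K_1$ by Lemma \ref{p}, since $K_1$ is compact and topologically convex. By the Remark after Proposition \ref{isaf}, $i^{(n)}$ carries $\widetilde{K_n}\subset M_n(Z)$ onto $K_n$. So we may identify $(X,\cC,K)$ with $(Z,\cC_Z,\tilde K)$, where $\cC_Z$ is the matrix cone generated by $\tilde K$.

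The plan is then to verify the nc base norm space axioms of \cite{BH} for $(X,\cC,K)$. Since these only reference $K$, $\cC$ and the vector space structure, this amounts to the first part of Theorem \ref{xknp}. The steps are as follows.

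(a) Check that $\cC$ is a matrix cone and that each $C_n$ is proper and contained in $M_n(X)_{\rm sa}$. Closure under $\alpha^*\cdot\alpha$ follows from matrix convexity of $K$ via polar decomposition of $\gamma\alpha$. Properness follows because $f^{(n)}(\gamma^*k\gamma)=\gamma^*\gamma$, which is zero only if $\gamma=0$.

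(b) Show that every $x\in C_n$ has the form $c k c$ with $c\in (M_n)_+$ and $k\in K_n$. Given $x=\gamma^*k\gamma$, take $c=(\gamma^*\gamma)^{1/2}$ and write $\gamma=vc$ with $v$ a partial isometry. One then completes $v$ to an isometry into a slightly larger level, using $K_{m}\oplus K_1$ and a fixed state in $K_1$ to fill the kernel of $c$. Also $K_n=\{x\in C_n : f^{(n)}(x)=I_n\}$, since $f^{(n)}(ckc)=c^2$.

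(c) Define the matrix seminorms on $M_n(X)_{\rm sa}$ by equation (2) of \cite{BH}, namely the infimum of $\|c_1^2+c_2^2\|$ over decompositions $x=c_1x_1c_1-c_2x_2c_2$, and extend to all of $M_n(X)$ by $\|x\|=\|\tilde x\|$. This requires that every selfadjoint matrix admits such a decomposition. Every selfadjoint $[x_{ij}]$ is a real combination of compressions of scalar-weighted elements of $K_1$. So it is a sum of terms $\alpha^*k\alpha$ with $k\in K_1$ and $\alpha$ a row, minus terms of the same kind, and direct sums combine these into a single difference $\gamma_1^*k_1\gamma_1-\gamma_2^*k_2\gamma_2$. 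Then (b) converts this to the required form.

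(d) Show that these are genuine norms satisfying Ruan's axioms, so that $X$ is an operator space, and that the cones $C_n$ are closed. This is the main obstacle. For definiteness and closedness I would argue as follows. At level 1 the norm restricted to $X_{\rm sa}$ dominates the classical base norm, which is a norm by Lemma \ref{p}. Applying $\xi^*\cdot\xi$ to a decomposition, together with the $\tilde x$ trick, shows that the norm on $M_n$ dominates the level-one norms of compressions, and hence is definite. For closedness of $C_n$, I would take a norm-convergent sequence $c_tk_tc_t\to x$ and use compactness of $K_n$ together with boundedness of $c_t^2=f^{(n)}(c_tk_tc_t)$. The entries converge in the TVS topology of Theorem \ref{vae}, and that topology is coarser than the norm, as in the proof of Lemma \ref{p}; subnet arguments as in Lemma \ref{p} then give $x\in C_n$. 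Ruan's axioms should follow formally from the decomposition description, as in \cite{BH}. If direct verification becomes messy, I would instead quote the characterization in \cite{BH} of nc base norm spaces as matrix ordered spaces whose base is a closed matrix convex set generating the cones.

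For the final assertion, when $K_1$ is locally convex, Lemma \ref{r} makes $X_{K_1}$ a dual base norm space isomorphic to $A(K_1)^*$. By Theorem \ref{vae}(3) and the Wittstock--Webster duality \cite{WW}, $K$ is then a compact matrix convex set in an LCTVS. Hence $\bA(K)^*$ is a nc dual base norm space with nc base $\delta(K)\cong K$. Lemma \ref{bnen} now gives a complete isometry $X\cong\bA(K)^*$ that is a nc base morphism. Transporting the predual makes $X$ a dual operator space, with $K$ weak* compact.
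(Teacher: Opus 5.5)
Your proposal is correct. For the first part it follows the paper's overall plan: check conditions (a)--(c) for a matrix base ordered space from \cite[Theorem 4.4]{BH}, take the operator space structure from that theorem, then prove norm closedness of the cones. Three steps, however, are done differently.

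\emph{Definiteness.} You compress to level one. A decomposition $x=c_1x_1c_1-c_2x_2c_2$ gives $\xi^*x\xi=\|c_1\xi\|^2y_1-\|c_2\xi\|^2y_2$ with $y_i\in K_1$, so the classical base norm of $\xi^*x\xi$ is at most $\|c_1^2+c_2^2\|$. The level-one hypothesis plus polarization then forces $x=0$. The paper instead runs a limit argument at level $n$: it adds $a_n^{1/2}wa_n^{1/2}$ to get a matrix convex combination and uses compactness and matrix topological convexity of $K_n$. Your argument is shorter and shows this step needs no level-$n$ topology.

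\emph{Closedness of $C_n$.} You use the Hausdorff topology $\mathcal{T}$ on $Z$ and continuity of entries from Theorem \ref{vae}(1), where the paper reuses its matrix-convex-combination trick. You need $\mathcal{T}$ to be coarser than the norm on all of $X$. That is not literally what Lemma \ref{p} shows (it concerns $K$), but it follows at once: $a_ty_t\to 0$ in the Lawson cone when $a_t\to 0$, and subtraction is continuous.

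\emph{The dual assertion.} You identify $X$ with $\bA(K)^*$ via the duality of \cite{WW} and Lemma \ref{bnen}, then transport the predual. The paper instead equips $M_n(X)$ with the product of the level-one weak* topology from Corollary \ref{s}, proves the unit balls compact, and applies Dixmier--Ng and 1.6.4 in \cite{BLM}. Your route is quicker, given \cite{WW} and the fact that duals of operator systems are nc dual base norm spaces. The paper's route is more intrinsic, and it identifies the weak* topology explicitly as the complexified product of the level-one weak* topology, which Corollary \ref{bregcn} later uses.

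One citation should be fixed: Theorem \ref{vae}(3) goes the wrong direction. To place $K$ in an LCTVS, use the locally convex topology on $X$ from Lemma \ref{r}, together with continuity of the compressions and a compact-to-Hausdorff argument as in Theorem \ref{vae}(1). Do not use Corollary \ref{wsn} or Theorem \ref{xknp} here, since both rest on the present theorem.
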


\begin{proof}  The matrix cone $\cC = (C_n)$ generated by $K$ is a matrix convex set with respect to $V$, and 
induces a matrix ordering for $V$  in the sense of operator system theory \cite{CE}.    Thus for example $C_n \oplus C_m \subset C_{n+m}$ 
and $\gamma^* C_m \gamma \subseteq C_{n}$ for $ \gamma  \in M_{m,n}$.   Let $X$ be the complex span of $K_1$.
Then $X_{\rm sa}$ is the real span of $K_1$, as may be seen by looking at $\frac{1}{2}(x + x^*)$ for $x \in X$.  
If $X_{\rm sa}$ has linear base $K_1$ then  
$(X_{\rm sa}, C_1)$ is a base norm space with base $K_1$ (for example by  the lines after Lemma \ref{ab}), 
and 
base function $f_1$.  
Then $X$ is a base norm space
in the sense of \cite[Section 3]{BH}, and is a complex base norm space with a canonical norm.  Define the base function $f = (f_1)_c$ on  $X$ by complexification.   
We have  $\xi^* f^{(n)}(k) \xi = f(\xi^* k \xi) = 1$ for all $k \in K_n$ and all unit vectors $\xi \in \bC^n$.
 Thus  $f^{(n)}(k) = I_n$.    Thus  
 $f^{(n)}( \gamma^* k \gamma ) = \gamma^*  \gamma$ for $k \in K_m , \gamma  \in M_{m,n}$.  Thus $f$ is completely positive on $(X,\cC)$. 
  If $\gamma_1^* x \gamma_1 = -\gamma_2^* y \gamma_2$, then 
$\gamma_1^* x \gamma_1 + \gamma_2^* y \gamma_2 = 0$.   Applying the base function we see that
$\gamma_1^* \gamma_1 + \gamma_2^* \gamma_2 = 0$, and so  $\gamma_1 =  \gamma_2 = 0$.
Thus $\cC$ is  a proper matrix cone/ordering for $V$, and 
 $(V, \cC)$ is a (proper) matrix ordered $*$-vector space.  Also,   $K$ is in a nc hyperplane $H = (H_n)$ not passing through 0,
 the one defined by $f$.  Indeed if 
 $$f^{(n)}( \gamma^* k \gamma ) = \gamma^*  \gamma = I, \qquad k \in K_m , \gamma  \in M_{m,n},$$ 
 then $\gamma^* k \gamma \in K_n$. Thus $K_n = M_n(X)_+ \cap H_n$.
 Thus  Condition (a) holds in the definition of a matrix base norm space above \cite[Theorem 4.4]{BH}.   Condition (b) in that definition
then   follows from \cite[Lemma 4.2 (1)]{BH}.   Indeed from that Lemma we see that we can take $m = n$ in the definition of $C_n$ above.

Now assume in addition that $K_n$ is compact and matrix topologically convex.   We verify Condition (c) holds in the definition of a matrix base norm space above \cite[Theorem 4.4]{BH}.  So suppose that $x = x^* = y-z \in M_n(X)_{\rm sa}$, where $y,z \in C_n$ are fixed.
Suppose that $x = c_n^{\frac{1}{2}}  k_n c_n^{\frac{1}{2}} - d_n^{\frac{1}{2}}  k'_n d_n^{\frac{1}{2}}$ for $c_n, d_n \in M_n^+, k, k' \in K_n$, with $\| c_n \| + \| d_n \| \leq 1/n$. 
Then $y +  d_n^{\frac{1}{2}}  k'_n d_n^{\frac{1}{2}} = z + c_n^{\frac{1}{2}}  k_n c_n^{\frac{1}{2}}$.  Applying the base function we see that
$f^{(n)}(y) + d_n = f^{(n)}(z) + c_n$.   Call this $b_n$.  By multiplying $x$ by a positive scalar we may assume that  $0 \leq b_n \leq I_n$, and set $a_n = I - b_n$.  For some fixed $w \in K_n$ we have 
 $$y +  d_n^{\frac{1}{2}}  k'_n d_n^{\frac{1}{2}} + a _n^{\frac{1}{2}}  w a_n^{\frac{1}{2}} = z + c_n^{\frac{1}{2}}  k_n c_n^{\frac{1}{2}} + a _n^{\frac{1}{2}}  w a_n^{\frac{1}{2}} ,$$
 and this is a matrix convex combination, so lives in $K_n$.  We may replace by a subsequence so that $a_n \to a$ say.  Letting $n \to \infty$ and using that $K$ is topologically convex,   we deduce that $y + a^{\frac{1}{2}}  w a^{\frac{1}{2}} = z + a ^{\frac{1}{2}}  w a^{\frac{1}{2}},$ and so $y = z$.  Hence $x = 0$ as desired. 
  
  Thus $(X,\cC,K)$ is a  matrix base ordered  space.  Moreover by \cite[Theorem 4.4]{BH}, $M_n(X)$ has a natural norm for all $n$, with respect to which 
  $X$ is an operator space.  Indeed we have checked that $X$ satisfies all of the conditions to be a nc base norm space with nc base $K$ with the  exception  of $M_n(X)_+$ (and therefore also $K_n$) being closed
   in the norm topology for $n \in \bN$.  To prove this we use the idea of the argument in the last paragraph. Suppose that $x_n = r_n^{\frac{1}{2}}  k_n r_n^{\frac{1}{2}} \to x = y-z$ in this norm, with $y, z \in M_n(X)_+, k \in K_n, r_n \in (M_n)_+$.   
  Applying the base functional we see that $r_n \to f^{(n)}(x)$, so that $(r_n)$ is convergent to $c$ say, and is bounded.  
  Since $\| x_n - y + z \| \to 0$ we may write $$x_n - y + z = c_n^{\frac{1}{2}}  y_n c_n^{\frac{1}{2}}  - 
  d_n^{\frac{1}{2}} z_n d_n^{\frac{1}{2}},$$
  with
$0 \leq c_n, d_n \to 0$, and $y_n, z_n \in K_n$.  Thus
$$x_n + z +
  d_n^{\frac{1}{2}} z_n d_n^{\frac{1}{2}} = y + c_n^{\frac{1}{2}}  y_n c_n^{\frac{1}{2}} .$$
     Applying the base function shows that $r_n + f^{(n)}(z) + d_n = f^{(n)}(y) + c_n.$ Call this $b_n$.
     By scaling we may assume that  $0 \leq b_n \leq I_n$, and set $a_n = I - b_n$.  For some fixed $w \in K_n$ we have 
   $$x_n + z +
  d_n^{\frac{1}{2}} z_n d_n^{\frac{1}{2}} + a _n^{\frac{1}{2}}  w a_n^{\frac{1}{2}} = y + c_n^{\frac{1}{2}}  y_n c_n^{\frac{1}{2}} 
  + a _n^{\frac{1}{2}}  w a_n^{\frac{1}{2}}.$$ This is a matrix convex combination in $K_n$.   
   Suppose that a subnet  $k_{n_t} \to k \in K_n$ in the topology there.   As in the last paragraph,
  by topological convexity 
  we deduce that $$c^{\frac{1}{2}}  k c^{\frac{1}{2}} + z + a^{\frac{1}{2}}  w a^{\frac{1}{2}}= y + a ^{\frac{1}{2}}  w a^{\frac{1}{2}}.$$ 
  Thus $x = y - z = c^{\frac{1}{2}}  k c^{\frac{1}{2}} \in C_n$.   So $C_n$ and $K_n$ are closed in the nc base norm, and moreover the topology on $K_n$ is coarser than the base norm topology.    

If in addition $K_1$ is locally convex then $(X_{\rm sa},K_1)$ is a dual base norm space by Corollary \ref{s}, and  with respect to this duality $K_1$ is a weak* compact convex set.   The weak* topology on $X_{\rm sa}$  is coarser than the norm topology.  We extend this topology to $X$, and to $M_n(X)$ for $n \in \bN$, in the obvious way (the `product topology').   This topology, which we write as $\tau$, is coarser than the 
norm topology on $X$, since $X$ is a simple norm complexification of $X_{\rm sa}$, so that we are just dealing with product topologies. 

We show that Ball$(M_n(X))$ is compact in this topology.   
Indeed if $(x_\lambda)$ is a net in Ball$(M_n(X))$, then 
$(\widetilde{x_\lambda})$ is a net in Ball$(M_{2n}(X))$.
Thus we may write $\widetilde{x_\lambda} = c_\lambda^{\frac{1}{2}} k_\lambda c_\lambda^{\frac{1}{2}} - d_\lambda^{\frac{1}{2}} k'_\lambda d_\lambda^{\frac{1}{2}}$.   We may assume  by taking a subnet 
that $\| c_\lambda + d_\lambda \| \leq t_\lambda \to 1$.   (We are using here a little-met trick with nets, that a net 
 $(x_t)$  of elements in the unit ball of a normed space each of whose norms is an infima of numbers
 $s^n_t$ for $n \in \bN$ with $s^n_t$ depending on  representatives $y^n_t$ of $x_t$ for $n \in \bN$, so that $x_t = y^n_t$, 
has a subnet whose elements are   representatives of the original 
$x_t$ with norm dominated by numbers converging to 1.  To see this,
consider the doubly indexed net $(y^n_t)$ as a subnet of $(x_t)$, and note that this subnet has norm dominated by numbers converging (with respect to the product directed set) to $1$.)

We may assume that subnets of all of the matrices converge.  Replacing the subnets by nets,  $c_\lambda \to c$ and $d_\lambda \to d$ in (matrix) norm, and $k_\lambda \to k, k'_\lambda \to k'$ in the topology on $K_{n}$.  This implies as before (using (\ref{pn})) that the entries in $k_\lambda$ converge in $\tau$ to the entries in $k$. 
We also have that the entries of 
$\widetilde{x_\lambda}$ converge in $\tau$ to elements of $X$, thus $\widetilde{x_\lambda}$ converges in $\tau$  to a matrix $z$ in $M_{2n}(X)$. This matrix may be written (by continuity of the matrix entries) as $c^{\frac{1}{2}} k c^{\frac{1}{2}} - d^{\frac{1}{2}} k' d^{\frac{1}{2}}$, and here $\| c + d \| \leq 1$.  Thus $z$ has norm $\leq 1$.  Therefore $x_\lambda$ converges in $\tau$ to a corner of $z$ which has norm $\leq 1$.   Thus Ball$(M_n(X))$ is compact for each $n$.
By the Dixmier-Ng theorem $X$ is a dual Banach space.  Hence  $X$ is a dual operator space
by  1.6.4 in \cite{BLM}.  The base function $f$ is weak* continuous on $X_{\rm sa}$, and so therefore also on 
$X$.   As stated in \cite{BH}, this implies that $X$ is a nc dual base norm space with nc dual base $K$.  Indeed we have 
essentially checked the conditions in the definition of the latter (see \cite[Definition 4.6]{BH}). 
\end{proof} 

{\bf Remark.} We continue the discussion in the last lines of the proof concerning the fact stated in \cite{BH} that 
nc dual base norm spaces may be characterized as a nc base norm space which is an operator space dual with the nc base weak* compact.  Indeed this fact follows easily from the last assertion of Theorem \ref{isnbns}, or from the last part of its proof. 

\begin{cor} \label{xkn}   Suppose that $K = (K_n)$ is an abstract complex matrix compactly convex set. 
Then $K$ may be viewed as a complex compact matrix convex set in the complexification $Z$ of  the real base norm space constructed from $K_1$ in Section {\rm \ref{wiaccu}}. If $\cC$ is the matrix cone in $Z$ generated by this copy of $K$ then 
$X_K = (Z,\cC,K)$ is a nc base norm space with nc base $K$. 
If in addition $K_1$ is locally convex then  
$X_K$ is a dual operator space and a nc dual base norm space with nc dual base $K$.  
\end{cor}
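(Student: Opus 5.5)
The plan is to reduce Corollary \ref{xkn} to Theorem \ref{vae} and Theorem \ref{isnbns}, taking $V = Z$.

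First, identify $K$ with $\tilde{K}$. Since $K$ is an abstract complex matrix compactly convex set, Theorem \ref{defmc} gives that $\tilde K$ is a concrete selfadjoint matrix convex set in the $*$-vector space $Z = (X_{K_1})_c$, with $\widetilde{K_n} \subset M_n(Z)_{\rm sa}$. Theorem \ref{vae} then gives a matrix affine homeomorphism of $(K,\tau)$ onto $\tilde K$, where $\tilde K$ carries the topologies $\sigma_n$ induced from the canonical TVS topology $\mathcal T$ on $Z$ (Lemma \ref{p} and the remark after it). In particular $\tilde K$ is compact at each level, so it is a compact matrix convex set in $Z$. It is also still an abstract matrix compactly convex set, and we identify $K$ with $\tilde K$.

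Next, check the hypotheses of Theorem \ref{isnbns} with $V = Z$. The complex span of $K_1$ in $Z$ is all of $Z$, because $X_{K_1} = \bR_+ K_1 - \bR_+ K_1$ by construction. We also need that $(Z_{\rm sa}, C_1) = (X_{K_1}, \bR_+ K_1)$ has linear base $K_1$. Indeed $C_1 = \{\gamma^* k \gamma\}$ with $\gamma \in M_{m,1}$ is just $\bR_+ K_1$: a compression $\gamma^* k \gamma$ equals $\|\gamma\|^2$ times a $1$-compression lying in $K_1$. Moreover Lemma \ref{p} says $(X_{K_1}, \|\cdot\|_{K_1})$ is a real base norm space with base $K_1$, since $K_1$ is compact and topologically convex. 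Theorem \ref{isnbns} then yields that $(Z,\cC,K)$ is a nc base norm space with nc base $K$, with its canonical operator space structure.

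For the last assertion, assume further that $K_1$ is locally convex. The second part of Theorem \ref{isnbns} then gives that $X_K$ is a dual operator space and a nc dual base norm space with nc dual base $K$.

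The main point to watch is that Theorem \ref{isnbns} requires an abstract matrix compactly convex topology on the concrete copy $\tilde K$. This is exactly what the homeomorphism statement of Theorem \ref{vae} supplies, via $\tau_n = \sigma_n$. Beyond that, the argument is bookkeeping: checking that $C_1 = \bR_+ K_1$ and that the complex span of $K_1$ is all of $Z$.
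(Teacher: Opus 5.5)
Your proposal is correct and matches the paper's proof, which simply applies Theorem \ref{isnbns} with $V = Z$ for both assertions. You additionally verify the hypotheses that the paper leaves implicit: the identification $K \cong \tilde{K}$ via Theorems \ref{defmc} and \ref{vae}, that the complex span of $K_1$ is all of $Z$, that $C_1 = \bR_+ K_1$, and that Lemma \ref{p} supplies the base norm structure with base $K_1$.
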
 

\begin{proof}  If $\cC$ is the matrix cone  generated by $K$ then 
by Theorem \ref{isnbns} we have that  $(Z,\cC,K)$ is a nc base norm space $X_K$ whose canonical matrix norms make $Z$ an operator space and a nc base norm space with nc base $K$. 
If in addition $K_1$ is locally convex then  
$X_K$ is a dual operator space and a nc dual base norm space with nc dual base $K$.  
\end{proof} 

If  $K$  is an abstract real or complex matrix or nc convex set we define the nc bounded affine functions 
$\bA_{\rm b}(K)$ just as in \cite{DK,BMcI}.   As in those sources, just as in the $\bA(K)$ case  it is an  operator system
by the abstract characterization of 
operator systems.   If  $f : K \to M_n$ is nc affine and selfadjoint 
then since $-cI \leq f(k) \leq cI$ if and only if $\| f(k) \| \leq c$, for $k \in K$, it follows that 
the matrix norms on $\bA_{\rm b}(K)$  coincide with the matrix order unit norm on selfadjoint elements.
Hence by the usual trick they coincide on all elements.

\begin{lemma}  \label{xknco}  If $X$ is a complex nc base norm space with nc base $K$ then 
$\bA_{\rm b}(K) \cong X^*$  completely isometrically and complete order isomorphically via a map taking the identity $1$ to the base function for $K$.     In particular,  $\bA_{\rm b}(K)$ is a dual operator system in the sense of 
{\rm \cite{BMag}}. 
If further $K$  is an abstract complex matrix compactly convex set then $\bA_{\rm b}(K) \cong X_K^*$ completely isometrically and complete order isomorphically. \end{lemma}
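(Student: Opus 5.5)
The plan is to imitate the classical Lemma \ref{ab} (5), using the nc extension result Lemma \ref{bnmn} in place of Lemma \ref{ab} (1)--(2). Define the restriction map $R : X^* \to \bA_{\rm b}(K)$ by sending $\varphi \in M_n(X^*) \cong CB(X, M_n)$ to the nc function $R^{(n)}(\varphi) = (\varphi^{(m)}|_{K_m})_m$. Since $\varphi^{(m)}$ is linear, this nc function is matrix affine. Since elements of $K_m$ have nc base norm $1$, it is bounded by $\|\varphi\|_{\rm cb}$, so it lies in $M_n(\bA_{\rm b}(K))$, which we identify with the bounded matrix affine $M_n$-valued functions on $K$. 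The base function $f$ is sent to the function $k \mapsto f^{(m)}(k) = I_m$, that is, to the identity $1$.

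Next we check that $R^{(n)}$ is bijective and isometric. Injectivity holds because $K_1$ spans $X$, since every element of $M_1(X)_{\rm sa}$ is a difference of elements of $C_1 = \bR_+ K_1$. For surjectivity and isometry, a bounded matrix affine $g : K \to M_n$ is selfadjoint-valued on $K$ when $g$ is selfadjoint in $\bA_{\rm b}(K)$. Lemma \ref{bnmn} extends it uniquely to a completely bounded selfadjoint $\tilde g : X \to M_n$ with $\|\tilde g\|_{\rm cb} = \|g\|_\infty$. Thus $R^{(n)}$ is an isometric bijection on selfadjoint elements. For a general $g$, I would write $g = g_1 + i g_2$ with $g_j$ selfadjoint, extend each piece, and then use the $2\times 2$ trick. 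The trick identifies $M_n$ and $M_{2n}$ norms of $x$ and $\tilde x$ on the $X$ side, and the same for $\tilde g = [\,0 \; g\,;\, g^* \; 0\,]$ on the $\bA_{\rm b}$ side. By the remark before the lemma, the $\bA_{\rm b}$ norms are order-unit norms determined by selfadjoint elements. This gives complete isometry on all of $M_n$.

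For the complete order isomorphism, I would check both directions. If $\varphi \in M_n(X^*)$ is completely positive, then $\varphi^{(m)}(K_m) \subset (M_{mn})_+$. Conversely, if $R^{(n)}(\varphi) \geq 0$ on each $K_m$, then $\varphi^{(m)} \geq 0$ on $C_m$, since $C_m$ consists of elements $\gamma^* k \gamma$ and compressions preserve positivity. This makes $\varphi$ completely positive, since $M_m(X)_+ = C_m$.

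Given this, $\bA_{\rm b}(K) \cong X^*$ is a dual operator space with the weak* topology transferred from $X^*$. Since $X^*$ is a dual operator system in the sense of \cite{BMag}, and order and unit are preserved, $\bA_{\rm b}(K)$ is a dual operator system. The last assertion follows by taking $X = X_K$ from Corollary \ref{xkn}.

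The main obstacle is the complete isometry for non-selfadjoint matrices. The delicate points are that Lemma \ref{bnmn} is stated for $V_{\rm sa}$-valued maps, and that the matrix norm on $\bA_{\rm b}(K)$ must be shown to match the cb norm. I expect to handle both through the $\tilde x$ and $\tilde g$ constructions together with the order-unit norm remark preceding the lemma.
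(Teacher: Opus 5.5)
Your proposal is correct and follows essentially the same route as the paper. Both use the restriction map to $K$, with Lemma \ref{bnmn} giving surjectivity, injectivity, complete positivity and the norm estimate on selfadjoint elements, and the $2 \times 2$ trick with off-diagonal entries $g$ and $g^*$ handling general elements. One difference: the paper takes the $1$-$2$ corner of the completely contractive selfadjoint extension of that $2\times 2$ matrix, which gives surjectivity and the norm bound in one step. So your splitting $g = g_1 + i g_2$ is unnecessary, and you need only the trivial corner inequality $\| \varphi \| \leq \| \tilde{\varphi} \|$ in $M_{2n}(X^*)$, not an identification of norms on the $X$ side.
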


\begin{proof}  The canonical (restriction to $K$) map $\rho : X^* = X_K^* \to A_{\rm b}(K)$ is clearly a completely positive 
complete 
contraction, and takes the base function for $K$ to the identity $1$.  It is 
surjective by Lemma \ref{bnmn}, and one-to-one by the uniqueness there.   Suppose that 
$f \in {\rm Ball}(M_n(A_{\rm b}(K)))_{\rm sa}$.
Hence   $f : K \to (M_n)_{\rm sa}$, and  $\| f \|_\infty \leq 1$.  By Lemma \ref{bnmn}
we have a completely contractive selfadjoint  extension $\tilde{f} : X \to M_n$,
and moreover if $f \geq 0$ then $\tilde{f}  \geq 0$.    Clearly $\rho(\tilde{f}) = f$.
For general $f \in {\rm Ball}(M_n(A_{\rm b}(K)))$ we have that $g \in {\rm Ball}(M_{2n}(A_{\rm b}(K)))_{\rm sa}$,
 where $g$ is the usual matrix with off-diagonal entries $f$ and $f^*$.  This extends to a completely contractive selfadjoint  
 $\tilde{g} : X \to M_{\rm 2n}$.  If $\tilde{f}$ is the `$1$-$2$-corner' of $\tilde{g}$ as usual, then 
 $\tilde{f}$ is a completely contractive   extension of $f$:  $\rho(\tilde{f}) = f$.  Thus $\rho$ is a 
  complete isometry and complete order isomorphism.  
\end{proof} 

{\bf Remark.} The last result generalizes the fact from \cite{DK} that $\bA_{\rm b}(K)$ is a dual operator system.
Indeed in  that setting, namely in the 
case that $K$ is an abstract complex matrix compactly convex set with $K_1$ locally convex, 
it is actually a bidual operator system.  That is for such $K$ we have $\bA_{\rm b}(K) \cong \bA(K)^{**}$
completely isometrically and as operator systems. 

\begin{cor} \label{tdosy}   The dual operator systems are precisely, up to unital complete order isomorphism
which is a weak* homeomorphism, the spaces $\bA_{\rm b}(K)$ for 
a matrix convex set $K$.  \end{cor}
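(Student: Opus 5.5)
We prove both inclusions. One direction is essentially the previous lemma. For a matrix convex set $K$, the plan is to produce a nc base norm space $X$ with nc base $K$. Then Lemma \ref{xknco} gives $\bA_{\rm b}(K) \cong X^*$ completely isometrically and complete order isomorphically, with $1$ going to the base function. So $\bA_{\rm b}(K)$ is a dual operator system, carrying the weak* topology transported from $X^*$. To get such an $X$ from an arbitrary (not necessarily compact) matrix convex set, we first use Proposition \ref{isaf}. This embeds $K$ as a selfadjoint matrix convex set in a $*$-vector space, inside a nc hyperplane not through $0$. We then let $X$ be the span with the matrix cone generated by $K$, as in Subsection \ref{uos} and the first (purely algebraic) part of the proof of Theorem \ref{isnbns}.

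The main obstacle is Condition (c) in \cite[Theorem 4.4]{BH}, together with closedness of the cones. The proof of Theorem \ref{isnbns} used compactness and topological convexity for both. Without a topology, $x = y - z$ may have seminorm $0$ while $x \neq 0$. To handle this I would quotient by the null space of the canonical nc base seminorm, that is, take the separated completion. Alternatively, and more cleanly, I would define the predual of $\bA_{\rm b}(K)$ directly as the norm closure of the span of point evaluations $\delta_k$, $k \in K_n$, inside $\bA_{\rm b}(K)^*$. This space is a nc base norm space with base the closure of $\delta(K)$. Its dual is $\bA_{\rm b}$ of that base, which equals $\bA_{\rm b}(K)$ because bounded matrix affine functions extend uniquely to closures. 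It is a dual operator system in the sense of \cite{BMag}, since the evaluations separate points and are weak* continuous. If closedness of the base becomes delicate, the fallback is to cite the characterization of dual operator systems in \cite{BMag} directly: weak* closed cones and a weak* continuous unit evaluation suffice.

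For the converse, let $\cS$ be a dual operator system with operator system predual-type space $\cS_*$. Take $K$ to be the normal matrix state space, $K_n = \{ \varphi \in M_n(\cS_*)_+ : \varphi(1) = I_n \}$. This is a matrix convex set, and by \cite{BH} $\cS_*$ is a nc base norm space with nc base $K$. Lemma \ref{xknco} then yields $\bA_{\rm b}(K) \cong (\cS_*)^* = \cS$ completely isometrically and complete order isomorphically, taking $1$ to $1$. The map is a weak* homeomorphism because it is the Banach adjoint identification with the same predual. The remaining check is that the weak* topology on $\bA_{\rm b}(K)$ is exactly the one induced by this predual. This holds by construction, since we define the weak* topology on $\bA_{\rm b}(K)$ via Lemma \ref{xknco}.
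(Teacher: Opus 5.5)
Your argument is correct. Its converse direction is exactly the paper's: Lemma~\ref{xknco} applied to $X=\cS_*$, whose nc base is the normal matrix state space, which is the fact from \cite{BH} that dual operator systems are the duals of nc base norm spaces.

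Where you differ is the forward direction. The paper simply invokes Lemma~\ref{xknco}, which as stated covers only those $K$ that are the nc base of some nc base norm space, and it leaves arbitrary matrix convex sets implicit. You rightly note that the canonical nc base seminorm on the span of a general $K$ can be degenerate. You then reduce to the base case by passing to $\overline{\delta(K)}$ in $X=\overline{{\rm span}}\,\delta(K)\subset\bA_{\rm b}(K)^*$. That reduction is sound, but your key claim, that $X$ is a nc base norm space with base $\overline{\delta(K)}$, is asserted rather than proved. It is also more than you need. The map $\phi\mapsto(k\mapsto\phi^{(n)}(\delta_k))$ directly inverts the restriction map $\bA_{\rm b}(K)\to X^*$, using density of ${\rm span}\,\delta(K)$ and $\|\delta_k\|_{\rm cb}=1$. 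This gives $\bA_{\rm b}(K)\cong X^*$ completely isometrically, with no base structure on $X$ required. Simpler still, $\bA_{\rm b}(K)$ is a weak* closed unital selfadjoint subspace of the von Neumann algebra $\oplus^\infty_n\ell^\infty(K_n,M_n)$, by Krein--Smulian, since bounded pointwise limits of matrix affine maps are matrix affine.

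This last viewpoint also fixes the weak* topology on $\bA_{\rm b}(K)$ intrinsically, as bounded pointwise convergence on $K$. That is better than your ``by construction'' remark, under which the topology could a priori depend on which predual you chose for a given $K$. With the intrinsic topology, the weak* homeomorphism in the converse direction becomes a genuine but easy check, because the normal states span $\cS_*$.
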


\begin{proof}    This follows from the last result, since it is shown in \cite[Lemma 3.3]{BH} that the dual operator systems are precisely the duals of nc base norm spaces. 
\end{proof}

\begin{cor} \label{wsn}   A  complex compact matrix  convex set $K$ is  matrix affinely homeomorphic to a weak* compact
matrix convex set in a dual operator space if and only if $K_1$ is locally convex.  In this case 
  $\bA(K)$ `completely separates points' of $K$.  \end{cor}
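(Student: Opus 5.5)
The forward direction is immediate. If $K$ is matrix affinely homeomorphic to a weak* compact matrix convex set in a dual operator space $W$, then $K_1$ is affinely homeomorphic to a weak* compact convex subset of $W$. The weak* topology is locally convex, so $K_1$ inherits a basis of convex neighborhoods, since the affine homeomorphism carries convex sets to convex sets.

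For the converse, assume $K_1$ is locally convex. View $K$ as an abstract complex matrix compactly convex set. This is legitimate because a compact matrix convex set in a TVS is such a set by Theorem \ref{vae} (3). By Corollary \ref{xkn} (equivalently Theorem \ref{isnbns}), $X_K=(Z,\cC,K)$ is a dual operator space and an nc dual base norm space with nc dual base $K$, and each $K_n$ is weak* compact in it. The main point to check is that the inclusion $K\hookrightarrow X_K$ is a homeomorphism at every level onto $K_n$ with the relative weak* topology.

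At level $1$ this is Lemma \ref{r} and Corollary \ref{s}: the weak* topology on $X_{K_1}$ restricts to the original topology on $K_1$. At level $n$, the weak* topology on $M_n(X_K)$ is the product topology on the entries. By Theorem \ref{vae} (1) and (4), the original topology $\tau_n$ on $K_n$ coincides with $\sigma_n$, the topology of entrywise convergence. The identity map from $(K_n,\tau_n)$ to the relative weak* topology is therefore continuous, because the entries $k_{ij}$ are continuous into $Z$ via (\ref{pn}). It is a continuous bijection from a compact space onto a Hausdorff space, hence a homeomorphism. Since the inclusion is clearly matrix affine, this gives the required matrix affine homeomorphism. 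Alternatively, one can pass through $X_K\cong \bA(K)^*$ (Theorem \ref{xknp}) and use the canonical map $\delta$.

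Finally, $\bA(K)$ completely separates points of $K$. With $K$ realised in $X_K\cong\bA(K)^*$, or in any dual operator space $W=W_*^*$ as a weak* compact matrix convex set, take $x\neq y$ in $K_n$. Some entry differs, so there is a weak* continuous functional $\varphi\in W_*$ separating $x_{ij}$ from $y_{ij}$. Its restriction to $K_1$, extended levelwise as $\varphi^{(n)}|_{K_n}$, is a continuous matrix affine function, i.e.\ an element of $\bA(K)$ (or of $M_n(\bA(K))$), and its value at level $n$ separates $x$ from $y$. Equivalently, following the argument after Proposition \ref{isaf}: continuous affine functions separate points of $K_1$ by Lemma \ref{r}, compressions separate points of $K_n$ by (M2)$^\prime$, and the Claim there produces a matrix affine extension of each $\varphi$, which is continuous by Proposition \ref{isc}.

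The only step needing care is the identification of the original level-$n$ topologies with the relative weak* topologies, and this is handled by the compact-to-Hausdorff argument above.
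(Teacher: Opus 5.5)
Your proof is correct and follows the paper's route: the forward direction is the same observation that the weak* topology is locally convex, and the converse goes through Corollary \ref{xkn} to realize $K$ as the weak* compact nc dual base of $X_K\cong\bA(K)^*$. You are more explicit than the paper in two places. First, you give the compact-to-Hausdorff argument showing that the original level-$n$ topologies agree with the relative weak* topologies. Second, you prove directly, using $\varphi^{(n)}$ for a weak* continuous $\varphi$ that separates an entry, that $\bA(K)$ completely separates points, which the paper simply cites as well known.
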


\begin{proof}  The one direction is clear since the weak* topology is locally convex.
For the other, by Corollary \ref{xkn} we see that  $K$ is matrix affinely homeomorphic to a compact matrix convex set in a Hausdorff LCTVS $E$.
Indeed $E$ may be taken to be $\bA(K)^*$ by the usual categorical duality \cite{WW}, and $K$ is weak* compact there.
It is well known that $\bA(K)$ `completely separates points' of a complex compact matrix  convex set $K$ in a 
dual operator space. (See e.g.\ \cite{WW,DK}; indeed an operator system obviously `completely separates points' of 
its noncommutative state space.)
 \end{proof} 

 \begin{cor} \label{vn}   Suppose $(X,\leq,K)$ is a complex nc base norm space with $K$ matrix compact with respect to an  LCTVS topology on $X$,
 or merely with $K$ an abstract matrix compactly convex set with $K_1$ locally convex.  
Then $(X,\leq,K)$ is a dual nc base norm space, and with respect to this duality $K$ is a weak* compact matrix convex set. \end{cor}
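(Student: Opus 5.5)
The plan is to imitate the proof of the classical Corollary \ref{s}, with the matrix duality of \cite{WW} replacing Kadison duality and Lemma \ref{bnen} replacing Lemma \ref{bne}. The first step reduces both hypotheses to the second one. If $K$ is matrix compact in an LCTVS topology on $X$, then Theorem \ref{vae} (3) shows that $K$ is an abstract matrix compactly convex set, with level-1 topology the relative LCTVS topology. That topology is locally convex on $K_1$, since a convex neighbourhood in $X$ intersected with the convex set $K_1$ is convex. So in either case we may assume that $K$ is an abstract complex matrix compactly convex set with $K_1$ locally convex.

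The second step applies Corollary \ref{xkn}. The universal space $X_K = (Z,\cC,K)$ is then a dual operator space and a nc dual base norm space with nc dual base $K$, and $K$ is weak* compact there. Corollary \ref{wsn} gives the same conclusion in the form $X_K \cong \bA(K)^*$. In the latter form $K$ is matrix affinely homeomorphic to the weak* compact matrix state space of the operator system $\bA(K)$.

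The third step identifies $X$ with $X_K$. Both $(X,K)$ and $(X_K,K)$ are complex nc base norm spaces whose nc bases are matrix affinely isomorphic; the isomorphism is the identity of $K$. Lemma \ref{bnen} therefore gives a completely isometric nc base morphism $\Phi : X_K \to X$ which is a complete order isomorphism. Transporting the predual of $X_K$ along $\Phi$ makes $X$ a dual operator space. Since $\Phi$ maps nc base onto nc base, $X$ becomes a nc dual base norm space whose nc dual base is $K$, and $K$ is weak* compact. Alternatively one can bypass $X_K$ and apply Theorem \ref{isnbns} directly, with $V = X$ and $\cC$ the matrix cone of $X$. Its hypotheses hold because $K_n = M_n(X)_+ \cap H_n$ for the nc base hyperplane, so the matrix cone generated by $K$ coincides with the given one and $K_1$ is a linear base for $X_{\rm sa}$.

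The main obstacle is the first case. There one must know that the LCTVS topology on $X$, restricted to each $K_n$, makes $K$ matrix topologically convex. Concretely, the matrix convexity operations $\sum \gamma_k^* v_k \gamma_k$ must be jointly continuous in the relative product topologies on $M_n(X)$; this holds because $M_n(X)$ carries the product TVS topology and these operations are polynomial in the entries. One must also make sure this topology is the one used in Theorem \ref{vae} (1), where $\tau_n = \sigma_n$. The remaining steps are direct applications of the results quoted above.
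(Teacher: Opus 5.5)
Your proposal is correct, but your main line differs from the paper's. The paper argues intrinsically in $X$. It first applies the classical Corollary \ref{s} to $(X_{\rm sa}, K_1)$, obtaining a real dual base norm space in which $K_1$ is weak* compact. It then applies Theorem \ref{isnbns} directly with $V = X$ to conclude that $X$ is a nc dual base norm space with nc dual base $K$. This is exactly the ``alternative'' you sketch at the end of your third paragraph, including your correct check that the matrix cone generated by $K$ is the given one, since every positive element has the form $c k c$ with $c \geq 0$.

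Your primary route instead mirrors the proof of Corollary \ref{s}. You build the universal model $X_K$ via Corollary \ref{xkn}, which is itself just Theorem \ref{isnbns} applied in $Z$. You then use the uniqueness Lemma \ref{bnen} to transport its predual to $X$. What your route buys is the conceptual point that a nc base norm space is determined by its nc base, and the identification $X \cong X_K \cong \bA(K)^*$ at one stroke, at the cost of an extra transport step. The paper's route is shorter and describes the weak* topology on $X$ concretely: on bounded sets it is the product topology coming from the weak* topology on $X_{\rm sa}$.

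Two small corrections. The nc affine isomorphism of the nc base of $X$ onto the nc base $\tilde{K}$ of $X_K$ is the canonical map $k \mapsto [k_{ij}]$ of Theorem \ref{defmc}, not literally the identity. The identification $X_K \cong \bA(K)^*$ is stated in Theorem \ref{xknp}, not Corollary \ref{wsn}.

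Your explicit reduction of the LCTVS hypothesis to the abstract one via Theorem \ref{vae}(3) is right. Local convexity of $K_1$ is inherited by intersecting convex neighbourhoods with $K_1$. The paper leaves this step implicit, and the ``obstacle'' you flag there is routine, as you observe yourself.
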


\begin{proof}  
By the real case in the last Section (see e.g.\ Corollary \ref{s}),   
$(X_{\rm sa},\leq,K_1)$ is a real dual base norm space, and $K_1$ is weak* compact and locally convex in $X_{\rm sa}$.  
Hence by Theorem \ref{isnbns}  we have that $X$ is a nc dual base norm space with nc dual base $K$, indeed
$X \cong \bA(K)^*$ as  nc dual base norm spaces by \cite{BH}.  Hence $X$ is a
 dual operator space with respect to whose duality $K$ is  weak* compact. \end{proof}

\begin{cor} \label{bregcn} Suppose that $K$ is a selfadjoint compact  matrix  convex set in a complex   $*$-LCTVS  $(V,\tau)$,  and suppose that 
$K_1$ lies in a hyperplane not passing through 0. Then $V$ contains (continuously) a complex matrix dual base norm space $X$ with nc base $K$, and whose weak* topology agrees with $\tau$ on base-norm bounded sets in $X$. 
 \end{cor}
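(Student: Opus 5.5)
The plan is to mimic the proof of Corollary \ref{breg}, with Theorem \ref{isnbns} replacing Lemma \ref{r} and Lemma \ref{bnmn} replacing Lemma \ref{ab}. Let $X$ be the complex span of $K_1$ in $V$, and let $\cC=(C_n)$ be the matrix cone generated by $K$, as in Theorem \ref{isnbns}. Since $K_1$ is a compact convex set in the LCTVS $V_{\rm sa}$ lying in a hyperplane not through $0$, Corollary \ref{breg} shows that $X_{\rm sa}=\bR_+K_1-\bR_+K_1$ is a real dual base norm space with base $K_1$; in particular $(X_{\rm sa},C_1)$ has linear base $K_1$. Also $K$ is compact in $\tau$ at each level. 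By Theorem \ref{vae} (3), $K$ with the relative topology of $V$ is an abstract matrix compactly convex set, and $K_1$ is locally convex since $V$ is. So all hypotheses of Theorem \ref{isnbns} hold, and $(X,\cC,K)$ is a nc dual base norm space with nc dual base $K$. (Alternatively, one can invoke Corollary \ref{vn}.)

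Next I would check continuity of the inclusion $X\to V$. Equivalently, view $X$ as $X_K\cong\bA(K)^*$ via Theorem \ref{xknp} and Lemma \ref{bnen}, and apply Lemma \ref{bnmn} to the matrix affine continuous inclusion $K\to V_{\rm sa}$. Its unique linear extension is continuous for the canonical TVS topology, and it is injective by Lemma \ref{a} (3) because $K_1$ lies in a hyperplane not through $0$. To get continuity from the weak* topology, note that on $X_{\rm sa}$ this is exactly the content of Corollary \ref{breg}, namely that $i_{K_1}$ is continuous. The complex case then follows by taking products, since the weak* topology on $X$ is the product topology of that on $X_{\rm sa}$, and $X=X_{\rm sa}+iX_{\rm sa}$ with $V$ a $*$-LCTVS.

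Finally I would show that the weak* topology agrees with $\tau$ on base-norm bounded sets. At level $1$, for $X_{\rm sa}$, this is again Corollary \ref{breg}. For a bounded net $x_\lambda=a_\lambda+ib_\lambda$ in $X$, the selfadjoint parts $a_\lambda=\frac12(x_\lambda+x_\lambda^*)$ and $b_\lambda$ are bounded in $X_{\rm sa}$, since the involution is isometric on a complex base norm space (dual Taylor norm). Since the involution is continuous in both topologies, convergence of $x_\lambda$ in either topology is equivalent to convergence of $a_\lambda$ and $b_\lambda$. This reduces the claim to the real case.

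The main obstacle is making sure that the nc base norm matrix structure on $X$ produced by Theorem \ref{isnbns} is the one whose level-$1$ norm and weak* topology coincide with those from Corollary \ref{breg}. I expect this to follow from the uniqueness in Lemma \ref{bnen} and Lemma \ref{bne}, because both have base $K_1$ at level $1$ and the complex norm is the dual Taylor complexification of the real one.
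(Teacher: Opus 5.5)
Your proposal is correct and follows essentially the same route as the paper: apply Corollary \ref{breg} at level 1, then Theorem \ref{isnbns} to get the nc dual structure on the complex span $X$, then reduce the complex statement to $X_{\rm sa}$ using that $\tau$ (and the weak* topology) is the product topology of its selfadjoint part. The main obstacle you flag is settled in the paper by inspecting the proof of Theorem \ref{isnbns}: the topology there to which Dixmier--Ng is applied is, by construction, the product of the level-1 weak* topology, so no uniqueness argument is needed. (Also, the level-1 norm of the nc structure need only be equivalent to the dual Taylor norm, not equal to it, as the Remark after the corollary notes; this is harmless because only boundedness matters.)
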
 

\begin{proof}   By e.g.\ Corollary \ref{breg}, $V_{\rm sa}$ contains (continuously) a real dual base norm space with  base $K_1$, and whose weak* topology agrees with $\tau$ on base-norm bounded sets.  This subspace of $V$ 
is the real span of $K_1$, and is $X_{\rm sa}$ if $X$ is the complex span of $K_1$.  Theorem \ref{isnbns} implies that with the canonical cones $\cC = (C_n)$, 
 $(X,\cC,K)$ is a  matrix dual base norm space whose canonical matrix norms make $X$ an operator space and a nc base norm space with nc dual base $K$.   The associated weak* topology on
 $X$ is, by the proof of Theorem \ref{isnbns}, the product topology or complexification of
 the weak* topology in the first lines of the present proof, and so agrees with $\tau$ on bounded sets in $X$.
 We are also using the fact that $\tau$ is the product topology induced by its restriction to $V_{\rm sa}$, which 
 is obvious in a  $*$-LCTVS. \end{proof} 
 
 {\bf Remark.}  Note that a complex matrix dual base norm space $X$ with nc base $K$ is also (with an equivalent norm)
 a complex dual base norm space with base $K_1$.  It is easy to see by an argument similar to one in the last proof, that the two associated weak* topologies are the same. 
 
 \bigskip

Again in some sense Corollary \ref{bregcn} allows a refinement on the main regularity results of \cite{Breg}.   Just as in the `commutative case' in Corollary \ref{breg2}, this refinement of regularity can be stated as saying the following: 
\begin{cor} \label{breg2cn}  If $(V,\tau)$ is a complex  $*$-LCTVS, and if $K$ is a selfadjoint compact matrix convex set 
such that $K_1$  spans $V_{\rm sa}$ and lies in a hyperplane not passing through 0, 
then $V$ (with a possibly finer topology) is a complex nc dual base norm space with nc base $K$, and whose weak* topology agrees with 
$\tau$ on base-norm bounded sets.  Thus $\bA(K,\bC)^* \cong (V,\tau)$ via a continuous   selfadjoint  isomorphism and complex nc base  isomorphism which
is a homeomorphism on base-norm bounded sets.  \end{cor}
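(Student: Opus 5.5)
The plan is to reduce to Corollary \ref{bregcn} in the same way that Corollary \ref{breg2} was deduced from Corollary \ref{breg}, with surjectivity supplied by the spanning hypothesis.

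First, apply Corollary \ref{bregcn} to $K$ in $(V,\tau)$. This produces a complex nc dual base norm space $X \subseteq V$ with nc base $K$, continuously contained in $V$. Here $X$ is the complex span of $K_1$, and $X_{\rm sa}$ is the real span of $K_1$. The weak* topology on $X$ agrees with $\tau$ on base-norm bounded sets.

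Next, show that $X = V$. Since $K_1$ spans $V_{\rm sa}$, we have $X_{\rm sa} = V_{\rm sa}$. Because $V$ is a $*$-vector space, every $v \in V$ decomposes as $v = \frac{1}{2}(v+v^*) + i\,\frac{1}{2i}(v - v^*)$, with both parts selfadjoint. Hence $X = V_{\rm sa} + iV_{\rm sa} = V$. Thus $V$, carrying the nc base norm weak* topology, is a complex nc dual base norm space with nc base $K$. That topology is possibly finer than $\tau$: the inclusion is continuous by Lemma \ref{bnmn}, via Lemma \ref{ab}(4).

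Then identify $V$ with $\bA(K,\bC)^*$. By Corollary \ref{vn} (or Theorem \ref{xknp} together with Lemma \ref{bnen}), a nc dual base norm space with nc dual base $K$ is isomorphic to $\bA(K)^*$. This isomorphism is completely isometric, a nc base isomorphism, and a weak* homeomorphism. It is selfadjoint because nc base morphisms are selfadjoint. Composing it with the identity map $(V,\text{weak*}) \to (V,\tau)$ gives a continuous selfadjoint isomorphism $\bA(K,\bC)^* \to (V,\tau)$.

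Finally, this composite is a homeomorphism on base-norm bounded sets, because weak* and $\tau$ agree there by the first step. Base-norm bounded sets correspond under the isometry.

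The only delicate point is the continuity of the inclusion. Continuity on bounded sets is clear, but global continuity needs the argument via the quotient TVS topology of Lemma \ref{ab}(4) and Corollary \ref{breg}. Everything else is bookkeeping.
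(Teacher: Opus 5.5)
Your reduction is the one the paper intends. The paper gives no separate proof and deduces Corollary \ref{breg2cn} from Corollary \ref{bregcn} exactly as Corollary \ref{breg2} is deduced from Corollary \ref{breg}. Three of your steps are fine:
\begin{itemize}
\item surjectivity from the spanning hypothesis via $v=\frac12(v+v^*)+i\,\frac1{2i}(v-v^*)$;
\item the identification with $\bA(K,\bC)^*$ via Lemma \ref{bnen};
\item the homeomorphism on base-norm bounded sets, from Corollary \ref{bregcn}.
\end{itemize}

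The step that fails is the one you flag as delicate, in the form you state it. You claim that the weak* topology on $V$ is finer than $\tau$, i.e.\ that the identity $(V,\text{weak*})\to(V,\tau)$ is continuous. Lemma \ref{ab}(4) and the TVS clause of Lemma \ref{bnmn} give continuity only from the canonical TVS topology of Lemma \ref{p}. That is the quotient topology induced by subtraction on the Lawson cone, and it is not the weak* topology.

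\begin{itemize}
\item The Lawson topology on each $[0,n]K_1$ is the weak* topology, since it is a compact Hausdorff topology mapping continuously and bijectively onto $[0,n]K_1\subset (V,\text{weak*})$. From this one checks that a set is open in the quotient topology iff it meets each weak* compact set $\{ck-dk' : k,k'\in K_1,\ 0\le c,d\le n\}$ in a relatively weak* open set.
\item So the quotient topology is the bounded weak* topology. By the Banach--Dieudonn\'e theorem it is strictly finer than weak* whenever $V$ is infinite dimensional.
\end{itemize}

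The weak* version of your claim is in fact false. Take an infinite dimensional operator system $\cS$, $V=\cS^*$, $K$ its matrix state space, and $\tau$ the bounded weak* topology. This is a Hausdorff locally convex topology with continuous involution that agrees with weak* on bounded sets, so every hypothesis holds. Yet $(V,\text{weak*})\to(V,\tau)$ is not continuous.

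You also cannot bridge the gap with Corollary \ref{w}. Its identification of the weak* topology with the subtraction quotient topology breaks down in infinite dimensions for the same reason.

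The repair is to take the ``possibly finer topology'' to be the canonical quotient (bounded weak*) topology, or simply the norm topology. The isomorphism $\bA(K,\bC)^*\to(V,\tau)$ is then globally continuous by Lemma \ref{ab}(4), and it is a homeomorphism on base-norm bounded sets as you argue. The rest of your proof goes through unchanged.
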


 \begin{thm} \label{breg25n}  Suppose that  $(V,\tau)$ is a complex matrix ordered $*$-LCTVS with closed selfadjoint  cones $M_n(V)_+$  for $n \in \bN$.
 Suppose also that either  {\rm (a)}\  
 $V_+$  is locally compact and spans $V$, or  {\rm  (b)}\ $V$ (at level 1) has a linear base which is $\tau$-compact.
Then $V$   is a dual nc base norm space with a compact matrix base for $(M_n(V)_+)$, and all the other conclusions
of Corollary {\rm  \ref{breg2cn}} hold.  \end{thm}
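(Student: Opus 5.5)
First reduce case (a) to case (b), and then obtain (b) from Corollary \ref{breg2cn} applied to the matrix base generated by the level 1 base.

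\emph{Reduction of (a) to (b).} Assume $V_+$ is locally compact and spans $V$. It is a proper closed cone in the real LCTVS $V_{\rm sa}$. By the fact cited in the proof of Corollary \ref{breg25} (\cite[p.\ 87]{Phelps}), $V_+$ has a compact base $K_1$. Since $V_+$ spans $V$, its real span is $V_{\rm sa}$. Hence $K_1$ is a $\tau$-compact linear base, which is (b).

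\emph{Case (b): the level 1 data.} Let $K_1$ be a $\tau$-compact linear base for $V_+$. The base function $\varphi$ is the unique linear functional on $V_{\rm sa}$ equal to $1$ on $K_1$. Complexifying gives a selfadjoint functional $f$ on $V$, and $K_1$ lies in the hyperplane $\{f=1\}$, which misses $0$. By Corollary \ref{breg25}, $V_{\rm sa}$ is then a real dual base norm space with base $K_1$, and its weak* topology agrees with $\tau$ on bounded sets.

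\emph{Case (b): the matrix levels.} Define
\[ K_n = \{x \in M_n(V)_+ : f^{(n)}(x) = I_n\}. \]
This is a selfadjoint matrix convex set, because $f^{(n)}(\gamma^* x \gamma) = \gamma^*\gamma$ and $f^{(n)}$ respects direct sums. To apply Corollary \ref{breg2cn} to $K$, I need two further facts.
\begin{enumerate}
\item[(i)] Each $K_n$ is $\tau$-compact in $M_n(V)$ with the product topology.
\item[(ii)] $K$ generates the given matrix cones: every $x \in M_n(V)_+$ has the form $c^{1/2} k c^{1/2}$ with $c = f^{(n)}(x) \geq 0$ and $k \in K_n$.
\end{enumerate}
Given (i) and (ii), Corollary \ref{breg2cn} makes $V$ a complex nc dual base norm space with nc base $K$, and supplies all the remaining conclusions. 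By (ii) the matrix cones $(C_n)$ of Theorem \ref{isnbns} are exactly the given cones $M_n(V)_+$, so the nc base is a matrix base for $(M_n(V)_+)$.

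For (i): $K_n$ is closed, since $M_n(V)_+$ is closed by hypothesis and $f^{(n)}$ is continuous on $K_1$-bounded pieces. For compactness, take $x \in K_n$ and a unit vector $\xi$. Then $\xi^* x \xi \in V_+$ and $f(\xi^* x \xi) = 1$, so $\xi^* x \xi \in K_1$. The entries $x_{ij}$ are given by the polarization identity (\ref{pn}) as fixed combinations of such compressions, so they lie in a fixed compact set built from $K_1$. Thus $K_n$ is a closed subset of a product of compact sets, hence compact.

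For (ii), first suppose $c = f^{(n)}(x)$ is invertible. Then $c^{-1/2} x c^{-1/2} \in K_n$. In general, compress to the support projection of $c$. I expect the main obstacle to be showing that the part of $x$ on $\ker c$ vanishes. If $c\xi = 0$ then $f(\xi^* x \xi) = 0$ with $\xi^* x \xi \in V_+$, so $\xi^* x \xi = 0$ because $f$ is strictly positive on $V_+ \setminus \{0\}$. A $2\times 2$ positivity argument, using the closed matrix cone together with the polarization identity (\ref{pn}), should then kill the off-diagonal entries $\eta^* x \xi$. The delicate point is that $V$ is only matrix ordered, not an operator system. So I would argue level-wise: for $x \geq 0$ with $\xi^* x \xi = 0$, the compression $\zeta^* x \zeta \geq 0$ for $\zeta = t\xi + \eta$, and applying $\varphi$ and letting $t$ vary forces $f(\eta^* x \xi) = 0$. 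Separation by base-norm continuous functionals on $V_{\rm sa}$ (available because it is a dual base norm space) then upgrades this to $\eta^* x \xi = 0$. Finally, the bounded-set topology agreement passes from level 1 to $M_n(V)$ via the product topology, exactly as in the proof of Corollary \ref{bregcn}.
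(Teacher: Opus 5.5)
Your overall route is the paper's. You obtain a compact base $K_1$ via Phelps and Corollary \ref{breg25}, set $K_n=\{x\in M_n(V)_+ : f^{(n)}(x)=I_n\}$, and get compactness of $K_n$ from the polarization identity (\ref{pn}). Your observation that the entries lie in $K_1$ on the diagonal and otherwise in the fixed compact set $\frac12(K_1-K_1)+\frac{i}{2}(K_1-K_1)$ is a clean replacement for the paper's subnet argument. You then invoke Corollary \ref{breg2cn}, as the paper does. Your step (ii) is not in the paper. Its proof stops at ``all of the conditions of Corollary \ref{breg2cn} hold'' and never checks that the cones $C_n$ generated by $K$ (the ones used in Theorem \ref{isnbns}) equal the given cones $M_n(V)_+$. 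The statement asserts that $K$ is a matrix base for $(M_n(V)_+)$, so including this check is a real improvement.

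However, your argument for (ii) has a gap at the decisive step. Running the $t$-argument with the base function only gives $f(\eta^*x\xi)=0$. That is automatic anyway, since $f(\eta^*x\xi)=\eta^* f^{(n)}(x)\xi=\eta^* c\,\xi=0$. The vanishing of a single functional at $\eta^*x\xi$ cannot be ``upgraded by separation'' to $\eta^*x\xi=0$: separation needs vanishing on a separating family, and your positivity argument only applies to positive functionals. Two things are missing.
\begin{enumerate}
\item[(1)] Run the argument with an arbitrary positive functional $\psi$ on $V_{\rm sa}$ (complexified). Since $\zeta^*\psi^{(n)}(x)\zeta=\psi(\zeta^*x\zeta)\geq 0$ for all $\zeta$, the scalar matrix $\psi^{(n)}(x)$ is positive. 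Also $\xi^*\psi^{(n)}(x)\xi=\psi(\xi^*x\xi)=0$, using $\xi^*x\xi=0$, which you correctly derive from strict positivity of $f$ on $V_+\setminus\{0\}$. Hence $\psi^{(n)}(x)\xi=0$, and so $\psi(\eta^*x\xi)=0$.
\item[(2)] Positive functionals separate points of $V_{\rm sa}$. Any functional $g$ bounded by $M$ on $K_1$ is the difference $(g+M\varphi)-M\varphi$ of positive functionals. Such bounded functionals separate points of the base norm space $V_{\rm sa}$, by Hahn--Banach or Lemma \ref{ab}(5).
\end{enumerate}
With this, $x=pxp$ for the support projection $p$ of $c$. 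Filling $\ker c$ with any element of the appropriate $K_{n-r}$ (e.g.\ a direct sum of copies of an element of $K_1$) then gives $x=c^{1/2}kc^{1/2}$ with $k\in K_n$. Note also that closedness of the matrix cones plays no role in (ii).
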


\begin{proof}  In case (b), as in Corollary \ref{breg25} we see that $V$ is a dual base norm space with compact base $K_1$ say.  It has a weak* 
continuous base function $f$ which defines a hyperplane not passing through 0, containing $K_1$.  The weak* topology agrees with $\tau$ on base-norm bounded sets. 
Set $$K_n = \{ [x_{ij}] \in M_n(V)_+ : f^{(n)}(x) = I_n \}.$$   Then $K = (K_n)$ is easily checked to be
matrix convex, and $K_n$ is selfadjoint since $f$ is selfadjoint on $V$. 
   To see that $K_n$ is compact in the product topology on 
$M_n(V)$, consider a net 
$x^t \in K_n$.   For fixed $i$ we have that $(x_{ii}^t)$ is in $K_1$, and so it 
has a convergent subnet with limit $z_{ii}$ say.  Moreover  $f(x_{ii}^t) = 1$ so that $f(z_{ii}) = 1$. 
Similarly  if $i \neq j$ then by the polarization identity formula
in  (\ref{pn}) we see that $(x_{ij}^t)$ is base-norm bounded in $V$.  So it has a 
weak* convergent (hence $\tau$-convergent) subnet with limit  $z_{ij}$ say.  
Moreover  $f(x_{ij}^t) = 0$ so that $f(z_{ij}) = 0$.    Also $x^t \to [z_{ij}] \in K_n$.   Thus $K_n$ is compact. So all of the conditions of 
Corollary  \ref{breg2cn} hold. 

In  case (a) we see similarly by Corollary  \ref{breg25} that $V$ is a dual base norm space with compact base $K_1$ say, and weak* 
continuous base function $f$.  Now proceed as in case (b). 
 \end{proof}

Thus one can define a complex nc dual base norm space e.g.\ to be a 
complex matrix ordered  $*$-LCTVS $(V,\tau)$
such that $V$ (at level 1) has a $\tau$-compact base, as in Theorem  \ref{breg25n}.   
 As we saw above $\tau$ may not be the desired weak* topology,
but it may be switched with it, and in any case is the same on `bounded sets'. 

\medskip

{\bf Example.}   The following example is often useful in constructing potential counterexamples to proposed variants of results above, since it can allow one to embed
a badly behaved operator space $X$ in a nc base normed space with a nc base that is usually tractable.

Let $X$ be an operator space (resp.\ let $W$ be a dual operator space), a  subspace (resp.\ weak $*$-closed subspace) 
of $B(H)$ say.  Then its Paulsen system (see e.g.\ early pages in Section 8 of \cite{Pnbook}) is
a   weak $*$-closed operator system in  $M = M_2(B(H)) \cong B(H^{(2)})$.
In  \cite[Section 5]{BH} we defined the {\em base Paulsen system} ${\mathfrak S}(X)$, which as a matrix ordered space is the same as $\cS(X)$, but is 
considered as a nc base norm space with base function the `normalized trace' $\tau$ on $\cS(X)$, namely the sum of the two scalars on  the 
main diagonal of $\cS(X)$.   This is strictly positive on  $\cS(X)_+$ so that $K_1 = \{ a \in \cS(X)_+ : \tau(a) = 1 \}$  is a base for 
$\cS(X)_+$, indeed is a linear base  for $\cS(X)$ in the sense of the introduction. 
Also in the dual operator space case $K_1$ is weak* compact, since it is weak* closed and bounded in  $M$.  Thus ${\mathfrak S}(X)$ is a 
dual nc base norm space by Theorem  \ref{breg25n}.

We showed in   \cite[Section 5]{BH} that the nc base norms on the base Paulsen system ${\mathfrak S}(X)$
 are equivalent (up to a constant) to the operator system norms. 
With some effort one can show the interesting fact that, just as is the case with the Paulsen system, the canonical map of an operator space $X$ into 
the $1$-$2$ corner of the base Paulsen system ${\mathfrak S}(X)$ is an  isometry.
So too are the canonical inclusions of the other three corners 
into ${\mathfrak S}(X)$.  (Note that there is a multiplication by $2$ in these inclusions because we are using the `normalized trace' 
instead of the trace', 
indeed obviously the norm of $E_{11} I$ in ${\mathfrak S}(X)$ is 1/2.)  Thus every operator space $X$ `is a corner' of a nc base norm space, its base Paulsen system. 

Similarly the projections onto the spaces sitting in the
four corners of ${\mathfrak S}(X)$ are complete contractions on ${\mathfrak S}(X)$ (multiplied by 2).   Also, the variant of `the Paulsen lemma' for the base Paulsen system holds.  We will present this elsewhere. 

\medskip 

We end with some remarks on functorial properties of base morphisms.  A one-to-one  base morphism $f : X \to Y$ between base norm spaces need not be an isometry, however its range is a base norm space and $f$ is an isometry (and order embedding) into that space, by Lemma \ref{bne}.  
Similarly in the nc case, using Lemma \ref{bnen}.   Quotients of base norm spaces behave well:

\begin{lemma} \label{qb}  A base morphism $u : X \to Y$ between real or complex base norm spaces with 
$u(K_X) = K_Y$ (these are the bases) is a quotient map, and induces
an isometric isomorphism $X/{\rm Ker}(u) \cong Y$.  Thus $X/{\rm Ker}(u)$ is a base norm space.  
Similarly a nc base morphism $u : X \to Y$ between nc base norm spaces with 
$u(K_X) = K_Y$ is a complete quotient map: it induces a completely isometric isomorphism $X/{\rm Ker}(u) \cong Y$. 
\end{lemma}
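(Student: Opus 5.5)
The plan is to treat the classical case first and reduce the nc case to it. Let $f_X, f_Y$ be the base functions. Since $u$ is a base morphism, $u$ is contractive (Lemma \ref{ab} (3)), so it suffices to show that for every $y \in Y$ and every $t > 1$ there is $x \in X$ with $u(x) = y$ and $\| x \| \leq t \| y \|$. This makes $u$ a quotient map, and the induced map $X/{\rm Ker}(u) \to Y$ a bijective isometry.

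In the real case, let $y \in Y$ with $\| y \| < 1$. By the base norm definition we can write $y = c k - d k'$ with $k, k' \in K_Y$, $c, d \geq 0$ and $c + d < 1$. Since $u(K_X) = K_Y$, choose $j, j' \in K_X$ with $u(j) = k$ and $u(j') = k'$. Then $x = c j - d j'$ satisfies $u(x) = y$ and $\| x \| \leq c + d < 1$. Hence $u$ maps the open unit ball of $X$ onto the open unit ball of $Y$, which gives the quotient statement.

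In the complex case, use the Remark after Corollary \ref{w}: $Y$ is the dual Taylor complexification of $Y_{\rm sa}$, and $u$ is selfadjoint, so it restricts to a real base morphism $X_{\rm sa} \to Y_{\rm sa}$ that maps $K_X$ onto $K_Y$. The real case makes this restriction a quotient map. The remaining step, which I expect to be the main obstacle, is that the complexification of a real quotient map is a quotient map for the dual Taylor norm. I would use the identification with the projective tensor product $X_{\rm sa} \hat{\otimes}_{\bR} l^2_2(\bR)$ from Section \ref{wiaccu}. Projective tensor products preserve quotient maps: an element of the open ball of the range has a representation $\sum y_i \otimes \alpha_i$ with $\sum \| y_i \| \| \alpha_i \| < 1$, and each $y_i$ lifts with almost the same norm. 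So $u = u_{\rm sa} \otimes I$ is a quotient map.

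Finally, $X/{\rm Ker}(u)$ is isometric to $Y$, so it is a base norm space with base the image of $K_X$.

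For the nc case, the same lifting is done at each matrix level. Take $y \in M_n(Y)_{\rm sa}$ with $\| y \| < 1$ and write $y = c^{1/2} k c^{1/2} - d^{1/2} k' d^{1/2}$ with $k, k' \in (K_Y)_n$ and $\| c + d \| < 1$. Since $u(K_X) = K_Y$ at every level, lift $k$ and $k'$ to elements $j, j' \in (K_X)_n$. Then $x = c^{1/2} j c^{1/2} - d^{1/2} j' d^{1/2}$ satisfies $u^{(n)}(x) = y$ and has nc base norm less than $1$.

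For a general $y \in M_n(Y)$, apply this to the selfadjoint matrix $\tilde{y} \in M_{2n}(Y)$ to get a selfadjoint lift $z$ of norm less than $1$. The $1$-$2$ corner of $z$ then lifts $y$, and it has norm at most $\| z \| < 1$, since corner compressions are contractive in an operator space. Hence each $u^{(n)}$ is a quotient map, so $u$ is a complete quotient map and $X/{\rm Ker}(u) \cong Y$ completely isometrically.
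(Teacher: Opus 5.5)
Your proposal is correct. The real and nc cases follow the paper's proof essentially verbatim: lift $c k - d k'$ (resp.\ $c^{1/2}kc^{1/2} - d^{1/2}k'd^{1/2}$) through base elements, then use the selfadjoint dilation $\tilde{y}$ and take its $1$-$2$ corner. The complex case differs only in packaging. The paper lifts directly using the description of the complex base norm as $\inf \{\sum_j |\alpha_j| : z = \sum_j \alpha_j y_j,\ y_j \in K_Y\}$, whereas you reduce to the real case and use that $\hat{\otimes}_{\bR}\, l^2_2(\bR)$ preserves quotient maps, which amounts to the same termwise lifting.
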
 

\begin{proof} 
To see the first assertion in the real case notice that $cx - d y$ in $Y$ with $c + d < t$ and $c, d \geq 0, x, y \in K_Y$ 
may be lifted to a $ck - d k' \in X,$ with $k, k' \in K_X$.  The complex case follows from the real case.  Indeed  if $z$ is in the dual Taylor complexification of $Y$, with $\| z \| < 1$, then by the first lines in \cite[Section 3]{BH}
 we can write $z = \sum_{j=1}^n \, \alpha_j \, y_j$ with $y_j \in K_Y, 
\alpha_j \in \bC$, with $\sum_{j=1}^n \, | \alpha_j | < 1$.  If $u(k_j) = y_j$ and $w =  \sum_{j=1}^n \, \alpha_j \, k_j$,
then $u(w) = z$ and $\| w \| < 1$ as desired.  

To prove the nc case  suppose that $z \in M_n(Y)$.  
If $z$ is selfadjoint then an obvious variant of the argument above in the classical real case shows that 
$z$ lifts to a matrix in $X$ with close norm.  If $z$ is nonselfadjoint and 
$\| z \|_n < 1$, then $\| \tilde{z} \| < 1$.   Then if $\tilde{z} = cxc - d yd$ in $M_{2n}(Y)$ with $c^2 + d^2 \leq  t I$
then $\tilde{z}$ may be lifted to $w = ckc - d k' d \in M_{2n}(X)$.  
 Let $x$ be the $1$-$2$ corner of $w$.
Then $u_n(x) = z$ and $\| x \| = \| \tilde{x} \| \leq \| w \| \leq t$. \end{proof}

\section{Closing remarks and Acknowledgements.}

The real case of the nc theory in the last section is much more technical, and requires the complexification theory from 
\cite{BMcI} (or its matrix convexity variant).  Thus we defer this to a sequel, together with the  case of 
our results relevant to Davidson and Kennedy's nc convexity \cite{DK}.   
\bigskip

We thank D. M. Hay for some comments on a preliminary version of this paper, and spotting several typos.  
This project was partially supported by NSF grant DMS-2154903.

\end{document}